\documentclass[twoside,10pt]{article}
\topmargin        -0.08in
\oddsidemargin    -0.08in
\evensidemargin   -0.08in
\marginparwidth    0.00in
\marginparsep 0.00in
\textwidth 16.8cm \textheight 22.5cm
\usepackage{amsmath,latexsym,amsfonts,indentfirst,amsthm,amsxtra,amssymb,bm}

\newtheorem{lemma}{Lemma}[section]
\newtheorem{theorem}{Theorem}[section]

\newtheorem{remark}{Remark}[section]

\numberwithin{equation}{section}

\pagestyle{myheadings}
\markboth {{\rm  L.-J. Xiong, T. Wang, and L.-S. Wang}}
{{\rm Fokker-Planck-Boltzmann Equation}}

\begin{document}

\title{\bf Global Existence and Decay of Solutions to the Fokker-Planck-Boltzmann Equation}

\author{
{\bf Linjie Xiong}\thanks{Corresponding author.
E-mail:  xlj@whu.edu.cn; tao.wang@whu.edu.cn; mathwls08@gmail.com}, \quad {\bf Tao Wang},\quad and \quad {\bf Lusheng Wang}\\[2mm]
School of Mathematics and Statistics\\
Wuhan University, Wuhan 430072, China}
\date{}

\vskip 0.2cm

\maketitle

\begin{abstract}
  The Cauchy problem to the Fokker-Planck-Boltzmann equation
  under Grad's angular cut-off assumption is investigated. When the initial data is a small perturbation of
  an equilibrium state, global existence and optimal temporal decay estimates of classical solutions are established. Our analysis is based on the coercivity of the Fokker-Planck operator and an elementary weighted energy method.
\end{abstract}

\section{Introduction and Main Results}
The Fokker-Planck-Boltzmann equation models the motion of particles in a thermal bath where the bilinear interaction is one of the main characters \cite{Bisi, Cercignani88, Loyalka}.
Mathematically, the Fokker-Planck-Boltzmann equation takes the following form:
\begin{equation} \label{FPB}
  \partial_t f+\xi\cdot\nabla_x f=Q(f,f)+{\epsilon}\nabla_{\xi}\cdot(\xi f) +\kappa\Delta_{\xi} f,
\end{equation}
where the nonnegative unknown function $f=f(t,x,\xi)$ represents the density of particles
at position $x \in\mathbb{R}^3$ and time $t\geq 0$ with velocity $\xi \in\mathbb{R}^3 $
and $\epsilon, \kappa$ are given nonnegative constants.
The collision operator $Q$ is a bilinear operator which acts only on the velocity variables $\xi$ and is local in $(t, x)$ as
\begin{equation} \label{Q}
  Q(f,g)(\xi)
  =\int_{\mathbb{R}^3\times S^2}q(\xi-\xi_{*},\omega)\left\{f(\xi_{*}')g(\xi')-f(\xi_*)g(\xi)\right\}
  d\omega d\xi_{*}.
\end{equation}
Here $\xi$, $\xi_{*}$ and $\xi'$, $\xi'_{*}$ are the velocities of a pair of particles before and after collision.
we assume these collisions to be elastic so that
\begin{equation*}
  \xi'=\xi-[(\xi-\xi_{*})\cdot\omega]\omega,\quad \xi_{*}'=\xi_{*}+[(\xi-\xi_{*})\cdot\omega]\omega,
  \quad\omega\in S^2.
\end{equation*}

The Boltzmann collision kernel $q(\xi-\xi_{*},\omega)$ for a monatomic gas is, on physical
grounds, a non-negative function which only depends on the relative velocity $|\xi-\xi_{*}|$
and on the angle $\theta$ through
$  \cos \theta=\omega\cdot(\xi-\xi_{*})/|\xi-\xi_{*}|.$
There are two important model cases in physics:
\begin{itemize}
  \item[$\bullet$] Hard spheres, i.e., particles which collide bounce on each other like billiard
                   balls. In this case
                   \begin{equation*}
                     q(|\xi-\xi_*|,\omega)=|(\xi-\xi_*)\cdot\omega|=|\xi-\xi_*||\cos \theta|.
                   \end{equation*}
  \item[$\bullet$] Inverse-power law potentials, i.e., particles which interact according to
                   a spherical intermolecular repulsive potential of the form
                   \[\phi(r)=r^{-(s-1)},\quad s\in(2,\infty),\]
                   then one can show that
                   \begin{equation*}
                     q(|\xi-\xi_*|,\omega)=|\xi-\xi_*|^{\gamma}B(\theta),\quad  \gamma=1-\frac{4}{s-1}.
                   \end{equation*}
                   As for the function $B$, it is only implicitly defined, locally smooth, and has a non-integrable singularity
                   $$ B(\theta)=|\cos \theta|^{-\gamma'}q_0(\theta),\quad \gamma'=1+\frac{2}{s-1},$$
                   where $q_0(\theta)$ is bounded, $q_0(\theta)\neq 0$ near $\theta=\pi/2.$
\end{itemize}

We consider the Cauchy problem of (\ref{FPB}) with prescribed initial data
\begin{equation}  \label{f_0}
  f(0,x,\xi)=f_0(x,\xi).
\end{equation}
Throughout this manuscript, we assume that
${{\epsilon}=\kappa}>0$ such that the global Maxwellian
$M=(2\pi)^{-3/2}e^{-|\xi|^2/2}$ is an equilibrium state of (\ref{FPB})
and the collision kernels satisfy Grad's angular cut-off assumption:
\begin{equation}  \label{cutoff}
  q(|\xi-\xi_*|,\omega)=|\xi-\xi_*|^{\gamma}B(\theta),\quad 0\leq B(\theta)\leq C|\cos \theta|,
  \quad -3<\gamma\leq 1.
\end{equation}

Our goal in this paper is to obtain the global existence and optimal temporal decay estimates
of classical solutions for (\ref{FPB}) and (\ref{f_0}) with ${\epsilon}=\kappa>0$ when the initial data $f_0$ is near the global Maxwellian
$
M=(2\pi)^{-3/2}e^{-|\xi|^2/2}.
$
To this end, if we use $u$ to denote the perturbation of  $f$ around the Maxwellian $M$ as
\begin{equation*}
  f=M+M^{1/2}u,
\end{equation*}
then the Cauchy problem (\ref{FPB}) and (\ref{f_0}) can be reformulated as
\begin{align}
\label{u}
  \partial_t u+\xi\cdot\nabla_x u&=Lu +\Gamma(u,u)+{\epsilon}L_{FP}u,\\[2mm]
\label{u_0}
  u(0,x,\xi)&=u_0(x,\xi)=M^{-1/2}(f_0-M).
\end{align}
Here, the linear operator $L$, the bilinear form $\Gamma(u_1,u_2)$ and
the classical linearized Fokker-Planck operator $L_{FP}$ are, respectively, given by
\begin{equation*}
  \begin{aligned}
    Lu&=M^{-\frac{1}{2}}\left\{Q(M,M^{1/2}u)+Q(M^{1/2}u,M)\right\},\\[2mm]
    \Gamma(u_1,u_2)&=M^{-\frac{1}{2}}Q(M^{1/2}u_1,M^{1/2}u_2),\\[2mm]
    L_{FP}u&=\Delta_{\xi}u+\frac{1}{4}(6-|\xi|^2)u.
  \end{aligned}
\end{equation*}
It is well known that for the linearized collision operator $L$, one has
\begin{equation*}
  Lg(\xi) = -\nu(\xi) g(\xi) + Kg(\xi),
\end{equation*}
where the collision frequency is
\begin{equation*}
  \nu(\xi)=\int_{\mathbb{R}^3\times S^2}|\xi-\xi_{*}|^{\gamma}q_0(\theta)M(\xi_{*})d\omega d\xi_{*}
  \sim(1 + |\xi|)^{\gamma},
\end{equation*}
and the operator $K$ is defined by
\begin{equation*}
  \begin{aligned}
    Ku(\xi)=&\int_{\mathbb{R}^3\times S^2}
             |\xi-\xi_{*}|^{\gamma}q_0(\theta)M^{1/2}(\xi_{*})M^{1/2}(\xi_{*}')u(\xi')d\omega d\xi_{*}\\
            &+\int_{\mathbb{R}^3\times S^2}
             |\xi-\xi_{*}|^{\gamma}q_0(\theta)M^{1/2}(\xi_{*})M^{1/2}(\xi')u(\xi_{*}')d\omega d\xi_{*}\\
            &-\int_{\mathbb{R}^3\times S^2}
             |\xi-\xi_{*}|^{\gamma}q_0(\theta)M^{1/2}(\xi_{*})M^{1/2}(\xi)u(\xi_{*})d\omega d\xi_{*}.
  \end{aligned}
\end{equation*}
Furthermore, the operator $L$ is non-positive, the null space of $L$ is the five dimensional space
\begin{equation*}
  \mathcal{N}=\textrm{span}\left\{M^{1/2},\xi_jM^{1/2}(j=1,2,3),|\xi|^2M^{1/2}\right\},
\end{equation*}
and $-L$ is locally coercive in the sense that there is a positive constant $\lambda_0$ such that
(see \cite{Cercignani}, \cite{Guo03}, \cite{Mouhot})
\begin{equation} \label{L0}
  -\int_{\mathbb{R}^3}uLud\xi\geq \lambda_0\int_{\mathbb{R}^3}\nu(\xi)|\{{\bf I}-{\bf P}\}u|^2d\xi
\end{equation}
holds for $u = u(\xi)$, where {\bf I} means the identity operator and {\bf P} denotes
its $\xi$-projection from $L^2_{\xi}(\mathbb{R}^3)$ onto the null space $\mathcal{N}$.
As in \cite{Guo04}, for any function $u(t, x, \xi)$, we can write {\bf P} as
\begin{equation*}
  \begin{cases}
    {\bf P}u=\{a(t,x)+b(t,x)\cdot\xi+c(t,x)(|\xi|^2-3)\}M^{1/2},\\[2mm]
    a=\int_{\mathbb{R}^3}M^{1/2}ud\xi,
    \quad
    b=\int_{\mathbb{R}^3}\xi M^{1/2}u d\xi,\\[2mm]
    c=\frac{1}{6}\int_{\mathbb{R}^3}(|\xi|^2-3)M^{1/2}ud\xi,
  \end{cases}
\end{equation*}
so that we have the macro-micro decomposition introduced in \cite{Guo04}
\begin{equation} \label{macro-micro}
  u(t,x,\xi)={\bf P}u(t,x,\xi)+\{{\bf I}-{\bf P}\}u(t,x,\xi).
\end{equation}
Here, ${\bf P}u$ and $\{{\bf I}-{\bf P}\}u$ is called the macroscopic component and the microscopic
component of $u(t, x, \xi)$, respectively. For later use, one can rewrite {\bf P} as
\begin{equation*}
  \begin{cases}
    {\bf P}u={\bf P}_0u\oplus{\bf P}_1u,\\[2mm]
    {\bf P}_0u=a(t,x)M^{1/2},\\[2mm]
    {\bf P}_1u=\left\{b(t,x)\cdot\xi+c(t,x)(|\xi|^2-3)\right\}M^{1/2}.
  \end{cases}
\end{equation*}

\textbf{Notations.} Throughout this paper, $C$ denotes some positive (generally large)
constant and $\lambda$ denotes some positive (generally small) constant, where both $C$ and
$\lambda$ may take different values in different places.
$A \lesssim B$ means there exists a constant $C>0$ such that $A\leq  CB$ holds uniformly.
$A \sim B$ means $A\lesssim B$ and $B\lesssim A$.
For the multi-indices $\alpha=(\alpha_1,\alpha_2,\alpha_3)$ and $\beta=(\beta_1,\beta_2,\beta_3)$,
$\partial_{\beta}^{\alpha}=\partial_{x_1}^{\alpha_1}\partial_{x_2}^{\alpha_2}\partial_{x_3}^{\alpha_3}
\partial_{\xi_1}^{\beta_1}\partial_{\xi_2}^{\beta_2}\partial_{\xi_3}^{\beta_3}.$
Similarly, the notation $\partial^{\alpha}$ will be used when $\beta=0$, and likewise for $\partial_{\beta}$.
The length of $\alpha$ is denoted by $|\alpha|=\alpha_1+\alpha_2+\alpha_3$.
$\beta\leq\alpha$ means that $\beta_j\leq\alpha_j$ for each $j=1,2,3$,
and $\alpha<\beta$ means that $\beta\leq\alpha$ and $|\beta|<|\alpha|$.
For notational simplicity,
let $\langle\cdot,\cdot\rangle$ denote the $L^2$ inner product in $\mathbb{R}^3_{\xi}$
with the $L^2$ norm $|\cdot|_2$,
and let $(\cdot, \cdot)$ denote the $L^2$ inner product either in $\mathbb{R}^3_{x}\times\mathbb{R}^3_{\xi}$
or in $\mathbb{R}^3_{x}$ with the $L^2$ norm $\|\cdot\|$.
Moreover, we define
$$
|g|_{\nu}^2=\langle \nu(\xi) g,g\rangle, \quad
\|g\|_{\nu}^2=( \nu(\xi) g,g).
$$
For an integer $m \geq 0$, we use $H^m$ to denote the usual Sobolev space.
We also define the space
$Z_q=L^2(\mathbb{R}_{\xi}^3; L^q(\mathbb{R}_{x}^3))$
for $q \geq 1$ with the norm
\begin{equation*}
  \|u\|_{Z_q}=\left(\int_{\mathbb{R}^3}\left(\int_{\mathbb{R}^3}|u(x,\xi)|^{q}dx\right)^{2/q}d\xi\right)^{1/2},
  \quad u=u(x,\xi)\in Z_q.
\end{equation*}
For an integrable function $g: \mathbb{R}^3\rightarrow\mathbb{R}$, its Fourier transform $\widehat{g}=\mathcal{F}g$ is defined by
\begin{equation*}
  \widehat{g}(k)=\mathcal{F}g(k)=\int_{\mathbb{R}^3}e^{-2\pi ix\cdot k}g(x)dx,\quad x\cdot k=\sum_{j}x_jk_j.
\end{equation*}
for $k\in \mathbb{R}^3$, where $i=\sqrt{-1}\in \mathbb{C}$ is the imaginary unit.
For two complex vectors $a,b\in \mathbb{C}^3,(a|b)=a\cdot \overline{b}$
denotes the dot product over the complex filed, where $\overline{b}$ is the complex conjugate of $b$.

For $q\in\mathbb{R}$, the velocity weight function $w_{q}=w_{q}(\xi)$ is always denoted by
\begin{equation}\label{weight}
  w_{q}(\xi)=\langle\xi\rangle^{q-\gamma}
\end{equation}
with $\langle\xi\rangle=(1+|\xi|^2)^{1/2}.$
For an integer $N$ and $l\geq N$, we define the instant energy functional
\begin{equation} \label{E}
  \mathcal{E}_{q,l}(u)(t)\equiv\sum_{|\alpha|+|\beta|\leq N}\|w_{q}^{l-|\beta|}\partial_{\beta}^{\alpha}u(t)\|^2,
\end{equation}
and the dissipation rate
\begin{equation} \label{D}
\begin{aligned}
  \mathcal{D}_{q,l}(u)(t)\equiv&\sum_{1\leq|\alpha|\leq N}\left\|\partial_x^{\alpha}{\bf P}u(t)\right\|^2
                            +{\epsilon}\sum_{|\alpha|\leq N}\left\|\{{\bf I}-{\bf P}_0\}\partial_x^{\alpha} u\right\|^2\\[2mm]
                           &+\sum_{|\alpha|+|\beta|\leq N}\|w_{q}^{l-|\beta|}
                            \partial_{\beta}^{\alpha}\{{\bf I-P}\}u(t)\|_{\nu}^2.
\end{aligned}
\end{equation}
We remark that our energy functional and dissipation rate which are not necessary to include the temporal derivatives which are different from \cite{Zhong_Li2}.
The main result of this paper is stated as follows: For the hard potential case, we have
\begin{theorem} \label{theorem1}
  Let $0\leq \gamma\leq 1$, $l\geq N\geq 4$, and $q\geq 1$.
  Assume that Grad's angular cut-off (\ref{cutoff}) is satisfied and that $f_0(x,\xi)=M+M^{1/2}u_0(x,\xi)\geq 0$. Then we have
  \begin{itemize}
  \item[{\rm (i)}] If there exists a sufficiently small $\delta_0>0$ such that $\mathcal{E}_{q,l}(u_0)\leq \delta_0$ and
  $(q-\gamma)^2{\epsilon}\leq \delta_0$, the Cauchy problem (\ref{u})-(\ref{u_0}) admits a unique global solution $u$ which satisfies
  $f(t,x,\xi)=M+M^{1/2}u(t,x,\xi)\geq 0$ for every $t\geq 0$;
  \item[{\rm (ii)}] If we assume further that $\gamma\leq 2 l(q-\gamma)$ and that there exists a sufficiently small positive constant $\delta_1>0$ such that
  $\mathcal{E}_{q,l}(u_0)+\|u_0\|_{Z_1}^2\leq \delta_1$ and $(q-\gamma)^2{\epsilon}\leq \delta_1$, the unique global solution $u(t,x,\xi)$ obtained above
  satisfies the following optimal temporal decay estimates
  \begin{equation*}
     \sup_{t\geq0}\left\{(1+t)^{\frac{3}{2}}\mathcal{E}_{q,l}(u)(t)\right\}\lesssim \delta_1.
  \end{equation*}
  \end{itemize}
\end{theorem}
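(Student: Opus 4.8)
We outline the argument, which rests on a weighted energy estimate for global existence and on a Fourier--energy analysis of the linearized flow for the sharp rate.

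\emph{Coercivity and the weighted energy estimate.} The basic observation is that $L$ annihilates the whole of $\mathcal{N}$ whereas $L_{FP}$ annihilates only $M^{1/2}$: one checks $L_{FP}M^{1/2}=0$, and $-L_{FP}$ is self-adjoint, non-negative, with spectral gap $\ge1$ on $(M^{1/2})^{\perp}$. Hence (\ref{L0}) upgrades to
\begin{equation*}
  -\big\langle (L+\epsilon L_{FP})u,u\big\rangle\;\ge\;\lambda_0\,|\{\mathbf{I}-\mathbf{P}\}u|_{\nu}^2+\lambda\,\epsilon\,|\{\mathbf{I}-\mathbf{P}_0\}u|_2^2,
\end{equation*}
so the Fokker--Planck term controls the hydrodynamic moments $b,c$ at no cost; only the mass density $a$ is undissipated at this stage, recovered at positive $x$-orders via the macroscopic equations as in \cite{Guo04}. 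For $|\alpha|+|\beta|\le N$ one applies $\partial_\beta^{\alpha}$ to (\ref{u}), pairs with $w_q^{2(l-|\beta|)}\partial_\beta^{\alpha}u$, and sums. The transport term yields lower-$|\beta|$ contributions $\partial_{\beta'}^{\alpha+e_i}u$, absorbed because $l-|\beta|$ decreases in $|\beta|$; the operator $K$ is handled by the usual cut-off $|\xi|\le R$ vs.\ $|\xi|>R$; integrating the Fokker--Planck term by parts in $\xi$ gives the favourable weighted dissipation $\|w_q^{l-|\beta|}\nabla_\xi\partial_\beta^{\alpha}u\|^2+\tfrac14\|w_q^{l-|\beta|}|\xi|\,\partial_\beta^{\alpha}u\|^2$ but also commutator errors of $w_q^{l-|\beta|}$ against $\Delta_\xi$ of size $(q-\gamma)^2\epsilon$ --- these are what force $(q-\gamma)^2\epsilon\le\delta_0$; and $\partial_\beta^{\alpha}\Gamma(u,u)$ is estimated by $\sqrt{\mathcal{E}_{q,l}(u)}\,\mathcal{D}_{q,l}(u)$ via the bilinear velocity-weighted estimates for $\Gamma$ together with $H^2(\mathbb{R}^3_x)\hookrightarrow L^{\infty}_x$. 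With the macroscopic estimates for $\partial_x^{\alpha}\mathbf{P}u$ ($1\le|\alpha|\le N$) one gets an energy functional equivalent to (\ref{E}) with
\begin{equation*}
  \frac{d}{dt}\mathcal{E}_{q,l}(u)(t)+\lambda\,\mathcal{D}_{q,l}(u)(t)\;\lesssim\;\Big(\sqrt{\mathcal{E}_{q,l}(u)(t)}+(q-\gamma)^2\epsilon\Big)\,\mathcal{D}_{q,l}(u)(t).
\end{equation*}
Since the linearized problem is parabolic in $\xi$ (Ornstein--Uhlenbeck type), a fixed-point iteration gives local solvability and a continuation criterion in $\mathcal{E}_{q,l}$; a standard continuity argument then yields, under smallness of $\mathcal{E}_{q,l}(u_0)$ and $(q-\gamma)^2\epsilon$, the unique global solution with $\mathcal{E}_{q,l}(u)(t)\le C\mathcal{E}_{q,l}(u_0)$. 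Nonnegativity propagates along the iteration: writing the Boltzmann loss term of the $(n+1)$-st iterate as a non-negative multiplication operator built from $f^n\ge0$, each $f^{n+1}$ solves a linear kinetic Fokker--Planck equation with non-negative source and datum, so $f^{n+1}\ge0$ by the maximum principle, and $f=M+M^{1/2}u\ge0$ in the limit. This proves (i).

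\emph{Optimal decay.} For (ii) we examine the linearized flow $\partial_t u+\xi\cdot\nabla_x u=(L+\epsilon L_{FP})u$ in Fourier variable $k$. Because the Fokker--Planck term damps momentum and energy, only the mass mode survives; $\mathbb{B}(k):=-i\xi\cdot k+L+\epsilon L_{FP}$ has a one-dimensional kernel at $k=0$, and a Kawashima-type interaction functional in $a,b,c$ yields $|\widehat{u}(t,k)|_2^2\lesssim e^{-c|k|^2(1+|k|^2)^{-1}t}|\widehat{u}_0(k)|_2^2$. Integrating over $k$ and using $\sup_k|\widehat{g}(k,\cdot)|_2\le\|g\|_{Z_1}$ gives the $L^2$ decay
\begin{equation*}
  \big\|e^{t\mathbb{B}}g\big\|\;\lesssim\;(1+t)^{-3/4}\big(\|g\|+\|g\|_{Z_1}\big).
\end{equation*}
Inserting this in the Duhamel representation of (\ref{u}), using $\|\Gamma(u,u)\|+\|\Gamma(u,u)\|_{Z_1}\lesssim\mathcal{E}_{q,l}(u)$ and $\int_0^t(1+t-s)^{-3/4}(1+s)^{-3/2}\,ds\lesssim(1+t)^{-3/4}$, a bootstrap on $\sup_{0\le s\le t}(1+s)^{3/4}\|u(s)\|$ gives $\|u(t)\|^2\lesssim\delta_1(1+t)^{-3/2}$. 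Finally, for $0\le\gamma\le1$ we have $\nu\gtrsim1$, and under the hypothesis $\gamma\le 2l(q-\gamma)$ the dissipation rate controls the energy up to the mass mode, i.e.\ $\mathcal{D}_{q,l}(u)\gtrsim\mathcal{E}_{q,l}(u)-C\|u\|^2$; combined with the differential inequality above (whose quadratic right-hand side is absorbed by smallness) this yields $\frac{d}{dt}\mathcal{E}_{q,l}(u)+\lambda\,\mathcal{E}_{q,l}(u)\lesssim\|u\|^2$, and Gronwall together with the decay of $\|u(t)\|^2$ gives $\sup_{t\ge0}(1+t)^{3/2}\mathcal{E}_{q,l}(u)(t)\lesssim\delta_1$; the rate $(1+t)^{-3/2}$ is that of the linearized mass mode, hence optimal.

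\emph{Main obstacle.} The delicate point is the weighted energy estimate: unlike the Boltzmann collision operator, $L_{FP}$ is genuinely second order in $\xi$, so conjugation by the velocity weight $w_q^{l-|\beta|}$ generates first- and zeroth-order remainders; closing the estimate depends on extracting the positive part of $L_{FP}$ and absorbing the remainders using precisely $(q-\gamma)^2\epsilon\le\delta_0$. A secondary technical balance, $\gamma\le 2l(q-\gamma)$, is what makes $\mathcal{D}_{q,l}$ strong enough relative to $\mathcal{E}_{q,l}$ to close the Gronwall step in the decay part.
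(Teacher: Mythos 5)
Your proposal follows essentially the same route as the paper: weighted energy estimates with the weight $w_q^{l-|\beta|}$ and the coercivity of $L$ and $L_{FP}$ to get a Lyapunov inequality, macroscopic estimates for the undissipated moments, Fourier–Kawashima analysis for the linear decay $(1+t)^{-3/4}$ in $L^2$, and a Duhamel/Gronwall bootstrap for the nonlinear decay. Two small corrections to the way you describe the decay step. First, the condition $\gamma\le 2l(q-\gamma)$ is not what makes $\mathcal{D}_{q,l}\gtrsim\mathcal{E}_{q,l}-C\|u\|^2$ (that inequality needs only $\nu\gtrsim1$ for hard potentials); it is used to bound the nonlinearity, $\|\nu^{-1/2}\Gamma(u,u)\|_{Z_1}+\|\nu^{-1/2}\Gamma(u,u)\|\lesssim\mathcal{E}_{q,l}(u)$, since one needs $\nu^{1/2}\lesssim w_q^l$, i.e.\ $\gamma/2\le l(q-\gamma)$. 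Second, the bootstrap as you state it, on $\sup_{s\le t}(1+s)^{3/4}\|u(s)\|$ alone, is not quite closed: the Duhamel bound on $\|u(t)\|$ requires a decaying control of $\mathcal{E}_{q,l}(u)(s)$ (through the nonlinearity), while the Gronwall step for $\mathcal{E}_{q,l}$ requires a decaying $\|u(s)\|^2$; the paper resolves this apparent circularity by running a single bootstrap on $X_{q,l}(u)(t)=\sup_{0\le s\le t}(1+s)^{3/2}\mathcal{E}_{q,l}(u)(s)$, which dominates both quantities simultaneously, and then applying Strauss' lemma to the resulting inequality $X_{q,l}\lesssim\delta_1+X_{q,l}^2$. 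With this amendment your argument is the paper's.
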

For the soft potential case, we have
\begin{theorem}\label{theorem2}
  Let $-3<\gamma<0$, $l\geq N\geq 8$, and $q\geq 1$.
  Assume that Grad's angular cut-off (\ref{cutoff}) is satisfied and that $f_0(x,\xi)=M+M^{1/2}u_0(x,\xi)\geq 0$. Then we have
  \begin{itemize}
  \item[{\rm (i)}] If there exists a sufficiently small $\delta_0>0$ such that $\mathcal{E}_{q,l}(u_0)\leq \delta_0$ and
  $(q-\gamma)^2{\epsilon}\leq \delta_0$, the Cauchy problem (\ref{u})-(\ref{u_0}) admits a unique global solution $u(t,x,\xi)$ which satisfies $f(t,x,\xi)=M+M^{1/2}u(t,x,\xi)\geq 0$ for every $t\geq 0$;
  \item[{\rm (ii)}] If we assume further that $l\geq N+1$ and $\gamma(1-l_0)\leq 2(q-\gamma)(l-1)$ for some $l_0>3/2$ and that there exists a sufficiently small $\delta_1>0$ such that
  $\mathcal{E}_{q,l}(u_0)+\|\langle\xi\rangle^{-\gamma l_0/2}u_0\|_{Z_1}^2\leq \delta_1$
  and $(q-\gamma)^2{\epsilon}\leq \delta_1$, the unique global solution $u(t,x,\xi)$ obtained above satisfy the following optimal temporal decay estimate
  \begin{equation*}
     \sup_{t\geq0}\left\{(1+t)^{\frac{3}{2}}\mathcal{E}_{q,l-1}(u)(t)\right\}\lesssim \delta_1.
  \end{equation*}
  \end{itemize}
\end{theorem}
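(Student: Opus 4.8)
The plan is to prove both parts by a continuity argument resting on a priori weighted energy estimates, and to reduce the decay statement~(ii) to a time-weighted version of those estimates combined with a spectral (linearized) decay estimate for the semigroup $e^{tB}$ generated by $B=-\xi\cdot\nabla_x+L+\epsilon L_{FP}$. For \emph{global existence}, granting local existence via an iteration scheme that also propagates the sign $f=M+M^{1/2}u\ge0$, it suffices to close the bound: on any interval $[0,T]$ on which $\sup_{[0,T]}\mathcal{E}_{q,l}(u)\le\delta$ with $\delta$ small, one has $\frac{d}{dt}\widetilde{\mathcal{E}}_{q,l}(u)+\lambda\mathcal{D}_{q,l}(u)\le0$ for a functional $\widetilde{\mathcal{E}}_{q,l}\sim\mathcal{E}_{q,l}$, hence $\mathcal{E}_{q,l}(u)(t)\lesssim\mathcal{E}_{q,l}(u_0)$ and the solution is global. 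I would assemble this from three families of estimates on \eqref{u}: (a) pure spatial derivatives $\partial_x^\alpha$, $|\alpha|\le N$, using the coercivity \eqref{L0} of $-L$ to produce $\|\{{\bf I-P}\}\partial_x^\alpha u\|_\nu^2$ and the spectral gap of $L_{FP}$ above its one-dimensional kernel $\mathbb{R}M^{1/2}$ to produce the $\epsilon\|\{{\bf I}-{\bf P}_0\}\partial_x^\alpha u\|^2$ term of $\mathcal{D}_{q,l}$, plus an $\epsilon$-weighted $\nabla_\xi$-gain; (b) macroscopic estimates: since the Fokker--Planck part conserves mass but \emph{dissipates} momentum and energy, $b$ and $c$ are already damped at order $\epsilon$ in $\mathcal{D}_{q,l}$, so only the continuity equation for $a$ needs the interaction-functional device (following the macro-micro method of \cite{Guo04}) to recover $\sum_{1\le|\alpha|\le N}\|\partial_x^\alpha{\bf P}u\|^2$; (c) weighted mixed derivatives $w_q^{l-|\beta|}\partial_\beta^\alpha$, using the weighted coercivity of $-L$ and tracking the commutators from $\xi\cdot\nabla_x$ and from $\partial_\xi$ hitting $w_q^{l-|\beta|}$ and $\nu(\xi)$. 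In each case $(\Gamma(u,u),\cdot)$ is absorbed into $C\sqrt{\mathcal{E}_{q,l}}\,\mathcal{D}_{q,l}$ by the trilinear estimates for $\Gamma$; the requirement $N\ge8$ rather than $4$ is forced by the soft potential, where $\nu(\xi)\sim\langle\xi\rangle^\gamma\to0$ and the nonlinear and commutator terms need extra Sobolev room and extra velocity weight. The one genuinely new feature appears in~(c): pairing $\epsilon L_{FP}$ against $w_q^{2(l-|\beta|)}\partial_\beta^\alpha u$ produces, after integration by parts, terms in which $\nabla_\xi$ falls on $w_q^{l-|\beta|}=\langle\xi\rangle^{(q-\gamma)(l-|\beta|)}$ and therefore carry a coefficient of size $\epsilon(q-\gamma)^2(l-|\beta|)^2$; these are exactly what the hypotheses $(q-\gamma)^2\epsilon\le\delta_0$ (or $\le\delta_1$) are designed to absorb.

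For \emph{decay}, the soft potential obstructs the usual route, since one does \emph{not} have $\mathcal{E}_{q,l}\lesssim\mathcal{D}_{q,l}+\|{\bf P}u\|^2$, but only the weight-trading bound $\mathcal{E}_{q,l-1}(u)\lesssim\mathcal{D}_{q,l}(u)+\|{\bf P}u\|^2$, valid because on the microscopic part $(w_q^{l-1-|\beta|})^2=\langle\xi\rangle^{2(q-\gamma)(l-1-|\beta|)}\lesssim\langle\xi\rangle^{2(q-\gamma)(l-|\beta|)+\gamma}\sim(w_q^{l-|\beta|})^2\nu(\xi)$ since $2(q-\gamma)+\gamma=2q-\gamma>0$, while on the macroscopic part the Gaussian absorbs the polynomial weight. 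To upgrade this into the rate $(1+t)^{-3/2}$ I would first establish a linearized decay estimate for $e^{tB}$ by Fourier transform in $x$ and spectral analysis of the symbol $\widehat{B}(k)=L+\epsilon L_{FP}-i(\xi\cdot k)$: the low-frequency fluid modes decay like $e^{-c|k|^2t}$, yielding $(1+t)^{-3/4}$ in $L^2_x$ against $L^1_x$ data, while the middle/high frequencies, which for soft potentials have no uniform-in-$\xi$ spectral gap, are handled by weighted energy estimates in Fourier space that trade the velocity weight $\langle\xi\rangle^{-\gamma l_0}$, $l_0>3/2$, for \emph{any} prescribed polynomial decay, giving a net bound of the shape $\|e^{tB}g\|_{L^2_{x,\xi}}\lesssim(1+t)^{-3/4}\|\langle\xi\rangle^{-\gamma l_0/2}g\|_{Z_1}+(1+t)^{-K}\|\langle\xi\rangle^{m(K)}g\|$. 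Then I would run the Duhamel formula $u(t)=e^{tB}u_0+\int_0^te^{(t-s)B}\Gamma(u,u)(s)\,ds$ with $\mathcal{X}(t):=\sup_{0\le s\le t}(1+s)^{3/2}\mathcal{E}_{q,l-1}(u)(s)$, combining the bilinear bounds $\|\langle\xi\rangle^{m}\Gamma(u,u)\|_{Z_1}+\|\langle\xi\rangle^{m}\Gamma(u,u)\|\lesssim(\text{a product of two norms of }u\text{ controlled by }\sqrt{\delta_1}\text{ and }\mathcal{X}(t))$ — Cauchy--Schwarz in $x$ turning $L^1_x$ into $L^2_x\cdot L^2_x$ — with the convolution inequalities $\int_0^t(1+t-s)^{-3/4}(1+s)^{-3/2}ds\lesssim(1+t)^{-3/4}$ and $\int_0^t(1+t-s)^{-K}(1+s)^{-3/4}ds\lesssim(1+t)^{-3/4}$ for $K>1$, to close $\mathcal{X}(t)\lesssim\delta_1+\sqrt{\delta_1}\,\mathcal{X}(t)+\mathcal{X}(t)^2$ and hence $\mathcal{X}(t)\lesssim\delta_1$. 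The drop of one velocity weight between the hypothesis (on $\mathcal{E}_{q,l}(u_0)$ and $\|\langle\xi\rangle^{-\gamma l_0/2}u_0\|_{Z_1}$) and the conclusion (on $\mathcal{E}_{q,l-1}$) is precisely the power spent in these two steps, and the side conditions $l\ge N+1$ and $\gamma(1-l_0)\le2(q-\gamma)(l-1)$ are exactly the inequalities that keep the weight arithmetic consistent.

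The crux, and the step I expect to be the main obstacle, is the soft-potential decay: with no uniform spectral gap (in contrast to Theorem~\ref{theorem1}), the optimal linearized rate must be extracted from a delicate pairing of the low-frequency eigenvalue expansion with high-frequency weighted energy estimates in Fourier space, and the velocity-weight accounting in the nonlinear Duhamel loop must be tight enough that a \emph{single} application of $\mathcal{E}_{q,l-1}\lesssim\mathcal{D}_{q,l}+\|{\bf P}u\|^2$ suffices — which is what fixes the gap between $l$ and $l-1$ and the precise form of the constraint on $l_0$. A secondary, Fokker--Planck-specific difficulty, already present in the existence estimate, is controlling the $\epsilon$-dependent commutators generated when $\nabla_\xi$ hits the polynomial weight $w_q^{l-|\beta|}$, which is precisely the role of the smallness hypothesis $(q-\gamma)^2\epsilon\le\delta_1$.
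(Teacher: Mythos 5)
Your overall architecture (weighted energy estimates $\Rightarrow$ a Lyapunov inequality for global existence; linearized decay of $e^{tB}$ plus Duhamel plus time-weighted energy bounds for the decay rate) matches the paper's, and the velocity-weight arithmetic you lay out (trading one power of $w_q$ for $\nu^{1/2}$, the role of $(q-\gamma)^2\epsilon$ small to absorb $\nabla_\xi w_q^{l-|\beta|}$ commutators, dropping from $l$ to $l-1$) is the right mechanism. But there are three genuine points of departure from what the paper actually does, two of which would give you trouble.

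First, in the macroscopic step you propose to build an interaction functional only for $a$, on the grounds that $L_{FP}$ already damps $b,c$ at rate $\epsilon$. The paper instead constructs interaction functionals $\mathcal I^a_\alpha,\mathcal I^b_{\alpha,j},\mathcal I^c_{\alpha,j}$ for all of $a,b,c$ (Theorem~\ref{macro_thm}), and for a reason: the dissipation rate $\mathcal D_{q,l}$ contains the $\epsilon$-free quantity $\sum_{1\le|\alpha|\le N}\|\partial_x^\alpha\mathbf P u\|^2$, which is later used without $\epsilon$-factors in both the comparison $\mathcal D_{q,l}+\|(a,b,c)\|^2\gtrsim\mathcal E_{q,l}$ and in the Fourier-side estimate $\mathcal E_{q,l'-1/2}\lesssim\mathcal D_{q,l'}+\|\mathbf P u\|^2$. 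Relying on $\epsilon$-damping for $b,c$ gives you constants that degenerate as $\epsilon\to 0$, so the decay rate would become $\epsilon$-dependent; the paper is explicitly careful to avoid this (cf.\ the remark on $\epsilon(t)=\kappa(1+3\kappa t)^{-1}\to 0$). So you should keep the full macro scheme for $b,c$ as well, not only for $a$.

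Second, and this is the central deviation: for the linearized decay you propose a spectral (Ukai-type) analysis of the symbol $\widehat B(k)=L+\epsilon L_{FP}-i\xi\cdot k$, splitting into a low-frequency eigenvalue expansion (decay $e^{-c|k|^2 t}$) and high-frequency weighted energy. The paper does not do spectral analysis at all; Lemma~\ref{weight-decay} is proved by a unified time-frequency weighted energy argument, multiplying the Lyapunov-type differential inequality in Fourier variables by $[1+\eta\rho(k)t]^J$ with $\rho(k)=|k|^2/(1+|k|^2)$ and trading the loss $\rho(k)[1+\eta\rho(k)t]^{J-1}E_l$ for the velocity-weighted dissipation plus a higher-weight initial datum via the splitting $\chi_{\mu^2\le 1+\eta\rho t}+\chi_{\mu^2>1+\eta\rho t}$. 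This is not merely a stylistic choice: for soft potentials the perturbation $-i\xi\cdot k$ is not uniformly relatively bounded with respect to $L$ since $\nu(\xi)\to 0$, so the low-frequency analytic perturbation of the fluid eigenvalues requires nontrivial extra work (uniform resolvent bounds in a weighted space), and the spectral gap you would be leaning on comes from $\epsilon L_{FP}$ and hence is again $\epsilon$-degenerate. The paper's elementary weighted energy route is precisely designed to sidestep these issues.

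Third, a minor quantitative point: you say the nonlinear assembly needs only a single application of $\mathcal E_{q,l-1}\lesssim \mathcal D_{q,l}+\|\mathbf P u\|^2$, but the paper actually interpolates through the intermediate level $l-1/2$: it multiplies the Lyapunov inequality at level $l-1$ by $(1+t)^{3/2+\eta}$, the one at level $l-1/2$ by $(1+t)^{1/2+\eta}$, and the one at level $l$ by $1$, applying $\mathcal E_{q,l'-1/2}\lesssim\mathcal D_{q,l'}+\|\mathbf P u\|^2$ at each rung of the ladder. Your one-step version would leave an uncontrolled term of the form $\int_0^t(1+s)^{1/2+\eta}\mathcal E_{q,l-1}(s)\,ds$ without the intermediate weight to pay for it. The three-level scheme is what makes the accounting close, and it is the reason for the specific hypothesis $l\ge N+1$ (so that both $l-1/2$ and $l-1$ stay within the range where the a~priori estimates hold).
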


\begin{remark}
  The analysis here can be used to deal with the case when ${\epsilon}={\epsilon}(t)>0$ and similar results can also be obtained provided that $(q-\gamma)^2\epsilon(t)\leq \delta_i$ hold for $i=0,1$ and every $t\geq 0$. This means that for the Fokker-Planck-Boltzmann equation (\ref{FPB}) with  ${\epsilon}\equiv 0$ and $\kappa>0$, i.e.
  $$
   \partial_t f+\xi\cdot\nabla_x f=Q(f,f) +\kappa\Delta_{\xi} f,
  $$
  we can use the scaling used in \cite{Li_Matsumura} to transform the above problem into
  (\ref{FPB}) with ${\epsilon}=\kappa={\kappa(1+3\kappa t)^{-1}}$ and similar results can also be obtained provided that $(q-\gamma)^2\epsilon(t)=(q-\gamma)^2\kappa(1+3\kappa t)^{-1}\leq \delta_i$ hold for $i=0,1$ and every $t\geq 0$.
  It is easy to see that a sufficient condition to guarantee the validity of the above inequalities is that $\kappa>0$ is sufficiently small as imposed in \cite{Li_Matsumura} and it is worth to pointing out that when $\gamma\to 1^-$ and by taking $q=1$, one can see that the assumptions $(q-\gamma)^2\kappa(1+3\kappa t)^{-1}\leq \delta_i$ hold even without the smallness  restriction on $\kappa$. In such a sense, our result generalizes the result obtained in \cite{Li_Matsumura} even for the hard sphere intermolecular interaction.
\end{remark}

\begin{remark}
It is worth to point out that here we use the weight function $w_{q}^{l-|\beta|}$ to capture the term $|\xi||\partial_{\beta}u|$
generated by the $\xi$-derivatives $\partial_{\beta}$ acting on the Fokker-Planck operator in term of the weaker dissipation rate $\|\partial_{\beta}u\|_{\nu}$.
\end{remark}

\begin{remark}
   The rates of convergence are optimal under the corresponding assumptions in the sense that
   they coincide with those rates given in $(\ref{linear-equation})$ at the level of linearization.
\end{remark}

There have been a lot of studies on the Fokker-Planck-Boltzmann equation (\ref{FPB}).
\textsc{DiPerna} and \textsc{Lions} \cite{DiPerna_Lions} proved the global existence of
the renormalized solutions for the Cauchy problem (\ref{FPB}) and (\ref{f_0}).
\textsc{Hamdache} \cite{Hamdache} obtained the global existence near the vacuum state in terms of a
direct construction.
It is shown in \cite{Li_Matsumura}
that a strong solution of the equation (\ref{FPB}) for initial data near the global Maxwellian
exists globally in time and tends asymptotically to another time-dependent
self-similar Maxwellian in the large-time limit for the hard sphere case (\ref{cutoff}) with $\gamma=1$.
\textsc{Li} and \textsc{Matsumura} in \cite{Li_Matsumura} first introduced an appropriate scaling to
transform (\ref{FPB}) with ${\epsilon}\equiv 0$ and $\kappa>0$ into
(\ref{FPB}) with ${{{\epsilon}=\kappa}}\to{\kappa(1+3\kappa t)^{-1}}$ and then achieved their goals
by employing the pioneering $L^2$ energy method based on macro-micro decomposition around a local Maxwellian
developed for the Boltzmann equation \cite{Liu_Yang_Yu}, \cite{Liu_Yu}.
For the case $-1\leq \gamma\leq 1$, the long time behavior to the Cauchy problem of (\ref{FPB}), (\ref{f_0}) is studied
by constructing the compensating functions to this system, while the main goal of this paper is to obtain the global existence of classical solutions for (\ref{FPB}) and
(\ref{f_0}) and the corresponding optimal time decay of the solutions under Grad's angular cut-off assumption
for the whole range of intermolecular interaction $-3<\gamma\leq 1$.

In the perturbation theory of the Boltzmann equation for the global
well-posedness of solutions around global Maxwelians,
the energy method was first developed
independently in \cite{Liu_Yu,Liu_Yang_Yu} and in \cite{Guo02,Guo04}.
We also mention the pioneering work \cite{Ukai} and its recent improvement \cite{Ukai_Yang} which are based on
the spectral analysis and the contraction mapping principle.
We remark that the energy method based on macro-micro decomposition around a local Maxwellian
\cite{Li_Matsumura} for the Fokker-Planck-Boltzmann eqution for the hard sphere case
does not apply to the problem under our consideration with $-3<\gamma<1$.
Our approach is based on the methods in \cite{Duan_Yang_Zhao1,Duan_Yang_Zhao2} for the
Vlasov-Poisson-Boltzmann system.
For more information related to the Boltzmann equation and the kinetic theory,
the reader can also refer to \cite{Cercignani,Cercignani88,Glassey,villani02} and references therein.

Before concluding this section, we sketch main ideas used in deducing our results.
One of the main difficulties lies in the fact that the dissipation of the linearized Boltzmann operator $L$ for non hard-sphere potentials can not control the full nonlinear dynamics due to the velocity growth effect of $|\xi||\partial_{\beta}u|$ generated by the $\xi$-derivatives $\partial_{\beta}$ acting on the Fokker-Planck operator. A suitable application of a weight function $w_{q}^{l-|\beta|}$ can indeed yield a satisfactory global existence of classical solution to the Fokker-Planck-Boltzmann equation for the case $-2\leq \gamma\leq 1$, while for the very soft potential case $-3<\gamma<-2$, we cannot close our energy estimate by only employing the coercivity of the linearized collision $L$ as for the case of  $-2\leq\gamma\leq 1$. Still and all, we can combine both the coercivity of $L$ and $L_{FP}$ and divide the integral domain about $\xi$ into two parts: the first part $\{\xi|\langle\xi\rangle\leq R\}$ can be control by the coercivity of $L$ with the smallness of ${\epsilon}$ while the second part $\{\xi|\langle\xi\rangle> R\}$ by the coercivity of $L_{FP}$ when we choose $R$ large enough.

The time rate of convergence to equilibrium is an important topic in the mathematical theory of the physical world. As pointed out in $\cite{Villani_MAMS}$, the exist general structures in which the interaction between a conservative part and a degenerate dissipative part lead to the convergence to equilibrium, where this property was called hypocoercivity. Here, indeed, we provide a concrete example of hypocoercivity property for the nonlinear Fokker-Planck-Boltzmann equation in the framework of perturbation.
We employ the methods developing by Duan and Strain $\cite{Duan_Strain11,Duan_CPAM}$.
For the proof, in the linearized case with a given non-homogeneous source, Fourier analysis
is employed to obtain time-decay properties of the solution operator.
In the nonlinear case energy estimates with the help of the proper Lyaponov-type inequalities
lead to the optimal time-decay rate of perturbed solution under some conditions on initial data.
As in $\cite{Duan_Yang_Zhao2}$, unlike the periodic domain $\cite{Strain_Guo}$,
the main difficult of the deducing the decay rates of solution for the soft potential
is caused by the lack of spectral gap for the linearized collision operator $L$.
We need a more delicate estimate on the time decay of solution to the corresponding linearized equation
in the case of the whole space ${\mathbb{R}^3}$ based on the weighted energy estimates,
a time-frequency analysis method, and the construction of some interactive energy functionals. We also mention that
Zhang and Li \cite{Zhong_Li2} have obtained the similar decay rate for the case $-1\leq\gamma\leq0$ by employing the compensating function which is different from us.

The rest of this paper is arranged as follows.
We prove the global existence of solutions to the perturbed problem
by establishing the a priori energy estimates on the microscopic and macroscopic
dissipations which are derived in Sections 2 and 3, respectively.
In the last section, we devote ourselves to obtaining the optimal temporal decay estimates of
the global solutions for both the hard potentials and the soft potentials.

\section{Macroscopic dissipation}
In this section, we will obtain the macroscopic dissipation rate
$$
\sum_{1\leq|\alpha|\leq N}\|\partial_x^{\alpha}{\bf P}u(t)\|^2
\sim \sum_{|\alpha|\leq N-1}\|\partial^{\alpha}_x\nabla_{x}(a,b,c)(t)\|^2.
$$
To this end, we shall first apply the macro-micro decomposition (\ref{macro-micro})
to the equation (\ref{u}) to discover the macroscopic balance laws satisfied by $(a,b,c)$.
Multiply (\ref{FPB}) by the collision invariants 1, $\xi$ and $|\xi|^2$ to find
the local balance laws
\begin{equation} \label{balance}
  \begin{cases}
    \partial_t\int_{\mathbb{R}^3} fd\xi+\nabla _x\cdot \int_{\mathbb{R}^3} \xi fd\xi=0,\\[2mm]
    \partial_t\int_{\mathbb{R}^3} \xi fd\xi+\nabla _x\int_{\mathbb{R}^3}\xi\otimes\xi fd\xi
    +{\epsilon}\int_{\mathbb{R}^3}\xi fd\xi=0,\\[2mm]
    \partial_t\int_{\mathbb{R}^3} |\xi|^2 fd\xi+\nabla _x\cdot\int_{\mathbb{R}^3}|\xi|^2\xi fd\xi
    +2{\epsilon}\int_{\mathbb{R}^3}(|\xi|^2-3)\xi fd\xi=0.
  \end{cases}
\end{equation}
As in \cite{Duan_Strain11}, define the high-order moment functions $A = (A_{jm})_{3\times3}$
and $B = (B_1, B_2, B_3)$ by
\begin{equation}\label{AB}
  A_{jm}(u)=\left\langle(\xi_j\xi_m-1)M^{1/2},u\right\rangle,\quad
  B_j(u)=\frac{1}{10}\left\langle(|\xi|^2-5)\xi_j M^{1/2},u\right\rangle.
\end{equation}
Plugging $f=M+M^{1/2}{\bf P}u+M^{1/2}\{{\bf I}-{\bf P}\}u$  into (\ref{balance}),
one can deduce the first system of macroscopic equations
\begin{equation} \label{macro1}
  \begin{cases}
    \partial_t a +\nabla_x\cdot b=0,\\[2mm]
    \partial_t b+\nabla_x(a+2c) +\nabla_x A(\{{\bf I}-{\bf P}\}u)+{\epsilon}b=0,\\[2mm]
    \partial_t c+\frac{1}{3}\nabla_x\cdot b+\frac{5}{3}\nabla_x\cdot B(\{{\bf I}-{\bf P}\}u)+2{\epsilon}c=0.
  \end{cases}
\end{equation}
To obtain the second system of macroscopic equations,
we split $u={\bf P}u+\{{\bf I}-{\bf P}\}u$ to decompose the equation (\ref{u}) as
\begin{equation} \label{Pu}
  \partial_t{\bf P}u+\xi\cdot\nabla_x{\bf P}u-{\epsilon}L_{FP}{\bf P}u
  =-\partial_t\{{\bf I}-{\bf P}\}u+R+G,
\end{equation}
with
\begin{equation}\label{R,G}
  R=-\xi\cdot\nabla_x \{{\bf{I}}-{\bf{P}}\}u+{\epsilon}L_{FP}\{{\bf{I}}-{\bf{P}}\}u+L\{{\bf{I}}-{\bf{P}}\}u,\quad G=\Gamma(u,u).
\end{equation}
Applying $A_{jm}(\cdot)$ and $B_j (\cdot)$ to both sides of (\ref{Pu}),
and using
\begin{equation*}
  \quad L_{FP}{\bf P}u=-b\cdot\xi M^{1/2}-2c(|\xi|^2-3)M^{1/2}
\end{equation*}
and the balance law of mass (\ref{macro1})$_1$, one has
\begin{equation} \label{macro2}
  \begin{cases}
    2\partial_jb_j+2\partial_t c+4{\epsilon}c=-\partial_tA_{jj}(\{{\bf{I}}-{\bf{P}}\}u)+A_{jj}(R+G),\\[2mm]
    \partial_jb_m+\partial_mb_j=-\partial_tA_{jm}(\{{\bf{I}}-{\bf{P}}\}u)+A_{jm}(R+G),\quad j\neq m,\\[2mm]
    \partial_tB_j(\{{\bf{I}}-{\bf{P}}\}u)+\partial_j c=B_j(R+G).
  \end{cases}
\end{equation}

Now we focus on the macroscopic equations (\ref{macro1}) and (\ref{macro2})
to estimate the higher order derivatives of the macroscopic coefficients $(a, b, c)$ in $L^2$ norm.
For this purpose, we first give a lemma without proofs.
Roughly speaking, the idea is just based on the fact
that the velocity-coordinate projector is bounded uniformly in $t$ and $x$,
and the velocity polynomials and velocity derivatives can be absorbed
by the global Maxwellian $M$ which exponentially decays in $\xi$.
\begin{lemma} \label{lemma1}
  For any $|\alpha|\leq N$ and $1\leq j,m\leq 3$, it holds that
  \begin{equation} \label{lemma1 1}
    \left\|\partial^{\alpha}_xA_{jm}(\{{\bf{I}}-{\bf{P}}\}u),\partial^{\alpha}_xB_j(\{{\bf{I}}-{\bf{P}}\}u)\right\|
    \lesssim \min\{\|\partial^{\alpha}_x\{{\bf{I}}-{\bf{P}}\}u\|,\|\partial^{\alpha}_x\{{\bf{I}}-{\bf{P}}\}u\|_{\nu}\}.
  \end{equation}
  Moreover, for any $|\alpha|\leq N-1$ and $1\leq j,m\leq 3$, it holds that
  \begin{equation} \label{lemma1 2}
    \|\partial^{\alpha}_xA_{jm}(R),\partial^{\alpha}_xB_j(R)\|
    \lesssim \sum_{|\alpha_1|\leq |\alpha|+1}\|\partial^{\alpha_1}_x\{{\bf{I}}-{\bf{P}}\}u\|_{\nu}
  \end{equation}
  and
  \begin{equation} \label{lemma1 3}
     \|\partial^{\alpha}_xA_{jm}(G),\partial^{\alpha}_xB_j(G)\|^2
     \lesssim \mathcal{E}_{q,l}(u)(t)\mathcal{D}_{q,l}(u)(t).
  \end{equation}
\end{lemma}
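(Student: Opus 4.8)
The plan is to prove the three estimates in Lemma~\ref{lemma1} by exploiting the fact that $A_{jm}$ and $B_j$ are inner products in $\xi$ against fixed Schwartz functions, namely $(\xi_j\xi_m-1)M^{1/2}$ and $(|\xi|^2-5)\xi_jM^{1/2}$. Since spatial derivatives $\partial_x^{\alpha}$ commute with these velocity integrals, we have, for instance, $\partial_x^{\alpha}A_{jm}(\{{\bf I}-{\bf P}\}u)=\langle(\xi_j\xi_m-1)M^{1/2},\partial_x^{\alpha}\{{\bf I}-{\bf P}\}u\rangle$. For (\ref{lemma1 1}), I would apply the Cauchy-Schwarz inequality in $\xi$: the bound by $\|\partial_x^{\alpha}\{{\bf I}-{\bf P}\}u\|$ is immediate because $|(\xi_j\xi_m-1)M^{1/2}|_2$ and $|(|\xi|^2-5)\xi_jM^{1/2}|_2$ are finite constants; the bound by $\|\partial_x^{\alpha}\{{\bf I}-{\bf P}\}u\|_{\nu}$ follows from the weighted Cauchy-Schwarz inequality $|\langle g,h\rangle|\le |\nu^{-1/2}g|_2\,|\nu^{1/2}h|_2$, using that $\nu(\xi)\sim(1+|\xi|)^{\gamma}$ grows at most polynomially while $M^{1/2}$ decays exponentially, so $\nu^{-1/2}(\xi_j\xi_m-1)M^{1/2}\in L^2_{\xi}$ for every $\gamma\le 1$.

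For (\ref{lemma1 2}), I would use the explicit form of $R$ in (\ref{R,G}), $R=-\xi\cdot\nabla_x\{{\bf I}-{\bf P}\}u+{\epsilon}L_{FP}\{{\bf I}-{\bf P}\}u+L\{{\bf I}-{\bf P}\}u$, and apply $A_{jm}$ and $B_j$ term by term. The transport term gives $\langle(\xi_j\xi_m-1)M^{1/2},-\xi\cdot\nabla_x\partial_x^{\alpha}\{{\bf I}-{\bf P}\}u\rangle$; since $|\alpha|+1\le N$, weighted Cauchy-Schwarz (absorbing the extra power of $|\xi|$ into $M^{1/2}$ and pulling out a $\nu^{1/2}$) bounds this by $\sum_{|\alpha_1|\le|\alpha|+1}\|\partial_x^{\alpha_1}\{{\bf I}-{\bf P}\}u\|_{\nu}$. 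For the $L_{FP}$ term I would integrate by parts in $\xi$ to move the $\Delta_\xi$ onto the Schwartz test function, producing another fixed Schwartz function against $\partial_x^{\alpha}\{{\bf I}-{\bf P}\}u$, hence controlled by the $|\alpha_1|=|\alpha|$ term; the factor ${\epsilon}$ is harmless since ${\epsilon}$ is bounded. For the $L\{{\bf I}-{\bf P}\}u$ term I would use that $L$ is bounded from $L^2_\nu$ to $L^2_{\nu^{-1}}$ (equivalently, $|\langle\phi,L g\rangle|\lesssim|\phi|_{\nu^{-1}}\cdot|g|_\nu$ for $\nu$-weighted test functions $\phi$), which together with the Schwartz decay of the test function again gives the desired bound.

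For (\ref{lemma1 3}), the estimate on the nonlinear term $G=\Gamma(u,u)$, I would write $\partial_x^{\alpha}G=\sum_{\alpha_1\le\alpha}\binom{\alpha}{\alpha_1}\Gamma(\partial_x^{\alpha_1}u,\partial_x^{\alpha-\alpha_1}u)$ by the bilinearity of $\Gamma$, and then use the standard pointwise-in-$(t,x)$ estimate for the cut-off collision term, $|\langle\phi,\Gamma(g,h)\rangle|\lesssim|\phi|_{\nu^{-1}}\,|g|_{\nu}\,|h|_2$ (or a symmetric variant), with $\phi$ the relevant Schwartz function. Taking $L^2_x$ norms, splitting by which factor carries more than $N/2$ derivatives, and using the Sobolev embedding $H^2(\mathbb{R}^3_x)\hookrightarrow L^\infty(\mathbb{R}^3_x)$ (valid since $N\ge 4$) to put the lower-order factor in $L^\infty_x$, I would bound $\|\partial_x^{\alpha}A_{jm}(G)\|^2$ by a product of an energy-type sum and a dissipation-type sum; matching the weights $w_q^{l-|\beta|}$ with $\beta=0$ and noting $w_q\ge 1$ allows absorbing the unweighted norms into $\mathcal{E}_{q,l}$ and $\mathcal{D}_{q,l}$. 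The main obstacle here is purely bookkeeping: one must be careful that $\Gamma$ always leaves at least a $\nu^{1/2}$-weight that can be matched to the $\|\cdot\|_\nu$ appearing in $\mathcal{D}_{q,l}$, and that the $\{{\bf I}-{\bf P}\}$ (microscopic) structure is not needed for $G$ since the whole energy $\mathcal{E}_{q,l}(u)$ bounds $\|u\|$. I expect the delicate point across all three parts to be the uniform handling of $\nu(\xi)\sim(1+|\xi|)^\gamma$ when $\gamma$ is very negative; but because every test function here carries a full Gaussian factor $M^{1/2}$, all negative powers of $\nu$ are absorbed and no genuine difficulty arises — which is why the authors state the lemma without proof.
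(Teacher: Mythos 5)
Your proposal is correct and follows exactly the approach the paper indicates in its remark preceding the lemma: the paper states Lemma~\ref{lemma1} without proof, noting only that the estimates follow from the boundedness of the velocity projector and from the exponential decay of $M^{1/2}$ absorbing velocity polynomials and derivatives. Your filling-in of the details---Cauchy--Schwarz in $\xi$ against fixed Schwartz test functions, term-by-term treatment of $R$ using self-adjointness of $L_{FP}$ and $L$, and a Leibniz/Sobolev argument for $\Gamma(u,u)$---is the standard elaboration of that hint.
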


Next we state the key estimates on the macroscopic dissipation in the following theorem.
\begin{theorem}\label{macro_thm}
There is an interactive energy functional $\mathcal{E}_{int}(u)(t)$ such that
\begin{equation}\label{E_int}
  |\mathcal{E}_{int}(u)(t)|\lesssim \sum_{|\alpha|\leq N}\|\partial_x^{\alpha}u(t)\|^2
\end{equation}
and
\begin{equation}\label{macro}
  \begin{aligned}
    &\frac{d}{dt}\mathcal{E}_{int}(u)(t)
    +\lambda \sum_{|\alpha|\leq N-1}\|\partial^{\alpha}_x\nabla_{x}(a,b,c)(t)\|^2\\[2mm]
    \lesssim &\sum_{|\alpha|\leq N}\|\partial^{\alpha}_x\{{\bf {I}-\bf{P}}\}u\|_{\nu}^2
    +{\epsilon}^2\sum_{|\alpha|\leq N-1}\|\partial^{\alpha}_x(b,c)\|^2+\mathcal{E}_{q,l}(u)(t)\mathcal{D}_{q,l}(u)(t),
  \end{aligned}
\end{equation}
where $\mathcal{E}_{int}(u)(t)$ is the linear combination of the following terms over
$|\alpha|\leq N-1$ and $1\leq j\leq 3$:
\begin{align*}
  \mathcal{I}_{\alpha}^a(u(t))
  &=\langle\partial^{\alpha}_xb,\nabla_x\partial^{\alpha}_xa\rangle,\\[2mm]
  \mathcal{I}_{\alpha,j}^b(u(t))
  &=\left\langle\frac{1}{2}\sum_{m\neq j}\partial_j\partial^{\alpha}_xA_{mm}(\{{\bf{I}}-{\bf{P}}\}u)
     -\sum_m\partial_m\partial^{\alpha}_xA_{jm}(\{{\bf{I}}-{\bf{P}}\}u),\partial^{\alpha}_xb_j\right\rangle,\\[2mm]
  \mathcal{I}_{\alpha,j}^c(u(t))
  &=\left\langle\partial^{\alpha}_xB_j(\{{\bf{I}}-{\bf{P}}\}u),\partial_j\partial^{\alpha}_xc\right\rangle.
\end{align*}
\end{theorem}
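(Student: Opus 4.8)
The plan is to take the interactive functional to be
\[
  \mathcal{E}_{int}(u)(t)=\sum_{|\alpha|\leq N-1}\Big(\mathcal{I}_{\alpha}^a(u(t))
  +\kappa_b\sum_{j=1}^{3}\mathcal{I}_{\alpha,j}^b(u(t))+\kappa_c\sum_{j=1}^{3}\mathcal{I}_{\alpha,j}^c(u(t))\Big),
\]
with suitably large universal constants $\kappa_b,\kappa_c$ fixed at the end. Then (\ref{E_int}) is immediate: each of $\mathcal{I}_{\alpha}^a,\mathcal{I}_{\alpha,j}^b,\mathcal{I}_{\alpha,j}^c$ is an $L^2(\mathbb{R}^3_x)$ pairing of $\partial_x^{\alpha}$, or $\partial_x\partial_x^{\alpha}$, of a velocity moment of $\{{\bf I}-{\bf P}\}u$ against $\partial_x\partial_x^{\alpha}(a,b,c)$ with $|\alpha|\leq N-1$, so Cauchy--Schwarz together with (\ref{lemma1 1}) and $\|\partial_x^{\alpha}(a,b,c)\|\lesssim\|\partial_x^{\alpha}{\bf P}u\|\leq\|\partial_x^{\alpha}u\|$ gives $|\mathcal{E}_{int}(u)(t)|\lesssim\sum_{|\alpha|\leq N}\|\partial_x^{\alpha}u(t)\|^{2}$.

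For the differential inequality I would differentiate the three families one at a time, in each case inserting the evolution of the relevant quantity from (\ref{macro1})--(\ref{macro2}) and integrating by parts in $x$. Differentiating $\mathcal{I}_{\alpha}^a=\langle\partial_x^{\alpha}b,\nabla_x\partial_x^{\alpha}a\rangle$ and using $\partial_ta=-\nabla_x\cdot b$ from (\ref{macro1})$_1$ and $\partial_tb$ from (\ref{macro1})$_2$ produces the coercive term $-\|\nabla_x\partial_x^{\alpha}a\|^{2}$ together with $+\|\nabla_x\cdot\partial_x^{\alpha}b\|^{2}$, a \emph{fixed} multiple of $\|\nabla_x\partial_x^{\alpha}c\|^{2}$, an $A(\{{\bf I}-{\bf P}\}u)$-term bounded by (\ref{lemma1 1}) through $\sum_{|\alpha_1|\leq N}\|\partial_x^{\alpha_1}\{{\bf I}-{\bf P}\}u\|_{\nu}^{2}$, and the $\epsilon^{2}\|\partial_x^{\alpha}b\|^{2}$ term coming from $\epsilon b$; no nonlinear term appears here, since (\ref{macro1}) descends from the exact conservation laws (\ref{balance}). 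Differentiating $\mathcal{I}_{\alpha,j}^c=\langle\partial_x^{\alpha}B_j(\{{\bf I}-{\bf P}\}u),\partial_j\partial_x^{\alpha}c\rangle$, inserting $\partial_tB_j(\{{\bf I}-{\bf P}\}u)=-\partial_jc+B_j(R+G)$ from (\ref{macro2})$_3$ and $\partial_tc$ from (\ref{macro1})$_3$, then integrating by parts in the resulting $b$- and $B$-terms and summing in $j$, gives $-\|\nabla_x\partial_x^{\alpha}c\|^{2}$, an arbitrarily small multiple of $\|\nabla_x\partial_x^{\alpha}b\|^{2}$, the microscopic term through (\ref{lemma1 1})--(\ref{lemma1 2}), the $\epsilon^{2}$ term, and the nonlinear term $\mathcal{E}_{q,l}(u)(t)\mathcal{D}_{q,l}(u)(t)$ through (\ref{lemma1 3}).

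The crucial step is $\mathcal{I}_{\alpha,j}^b$, whose inner factor $F_{\alpha,j}:=\tfrac12\sum_{m\neq j}\partial_j\partial_x^{\alpha}A_{mm}(\{{\bf I}-{\bf P}\}u)-\sum_m\partial_m\partial_x^{\alpha}A_{jm}(\{{\bf I}-{\bf P}\}u)$ is tailored so that, after substituting $\partial_tA_{jj}(\{{\bf I}-{\bf P}\}u)$ and $\partial_tA_{jm}(\{{\bf I}-{\bf P}\}u)$ ($j\neq m$) from (\ref{macro2})$_1$--(\ref{macro2})$_2$, one obtains $\partial_tF_{\alpha,j}=\Delta_x\partial_x^{\alpha}b_j+\partial_j^{2}\partial_x^{\alpha}b_j+(\text{$x$-divergences of }\partial_x^{\alpha}A_{jm}(R+G))$, the key point being that the $\partial_tc$ and $\epsilon c$ contributions coming from the diagonal identity (\ref{macro2})$_1$ cancel identically between the two groups. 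Pairing $\partial_tF_{\alpha,j}$ with $\partial_x^{\alpha}b_j$ and integrating the Laplacian by parts yields $-\|\nabla_x\partial_x^{\alpha}b_j\|^{2}-\|\partial_j\partial_x^{\alpha}b_j\|^{2}$ (the second summand dropped); the $A(R+G)$-terms, after one integration by parts moving an $x$-derivative onto $\partial_x^{\alpha}b_j$, are estimated by (\ref{lemma1 2})--(\ref{lemma1 3}) at order $\leq N-1$ by $\eta\|\nabla_x\partial_x^{\alpha}b\|^{2}+C_{\eta}\sum_{|\alpha_1|\leq N}\|\partial_x^{\alpha_1}\{{\bf I}-{\bf P}\}u\|_{\nu}^{2}+C_{\eta}\mathcal{E}_{q,l}(u)(t)\mathcal{D}_{q,l}(u)(t)$; and the remaining piece $\langle F_{\alpha,j},\partial_t\partial_x^{\alpha}b_j\rangle$, where one substitutes $\partial_tb$ from (\ref{macro1})$_2$ and uses $\|F_{\alpha,j}\|\lesssim\sum_{|\alpha_1|\leq N}\|\partial_x^{\alpha_1}\{{\bf I}-{\bf P}\}u\|_{\nu}$ from (\ref{lemma1 1}), contributes after Young's inequality only arbitrarily small multiples of $\|\nabla_x\partial_x^{\alpha}(a,c)\|^{2}$ plus microscopic, $\epsilon^{2}$, and nonlinear terms. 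I expect the main obstacle to be the $x$-derivative bookkeeping: because the Fokker--Planck transport makes $R$ carry one extra $x$-derivative compared with the pure Boltzmann case, each step must be arranged so that Lemma~\ref{lemma1} --- whose bounds on $A(R),B(R)$ and $A(G),B(G)$ hold only for $|\alpha|\leq N-1$ --- is never applied at higher order.

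To conclude, I would add the three differential identities with weights $1,\kappa_b,\kappa_c$ and sum over $|\alpha|\leq N-1$. The only gradient cross-terms on the right that do not carry an adjustable small factor are a universal constant times $\|\nabla_x\cdot\partial_x^{\alpha}b\|^{2}$ and a universal constant times $\|\nabla_x\partial_x^{\alpha}c\|^{2}$, both originating from $\mathcal{I}_{\alpha}^a$; so one first fixes $\kappa_b$ large, then $\kappa_c$ large, and only afterwards takes the Young parameters small (in terms of $\kappa_b,\kappa_c$), whereupon the coercive quantities $\tfrac12\|\nabla_x\partial_x^{\alpha}a\|^{2}$, $\kappa_b\|\nabla_x\partial_x^{\alpha}b\|^{2}$, $\tfrac12\kappa_c\|\nabla_x\partial_x^{\alpha}c\|^{2}$ absorb every cross-term and leave $\lambda\sum_{|\alpha|\leq N-1}\|\partial_x^{\alpha}\nabla_x(a,b,c)\|^{2}$ on the left, while all remaining terms assemble precisely into the right-hand side of (\ref{macro}).
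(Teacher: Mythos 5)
Your proposal is correct and follows essentially the same argument as the paper: you reproduce its Steps 1--4 (in a different order), including the key algebraic cancellation of the $\partial_t c$ and $\epsilon c$ contributions in $\partial_tF_{\alpha,j}$ that yields the elliptic structure $\Delta_x b_j+\partial_j^2 b_j$, the use of Lemma~\ref{lemma1} on the $A,B$-remainders, and the ``choose the weight large, then the Young parameter small'' absorption at the end. One minor inaccuracy in a side remark: the extra $x$-derivative in $R$ of (\ref{R,G}) comes from the ordinary transport term $-\xi\cdot\nabla_x\{{\bf I}-{\bf P}\}u$ (present already for the pure Boltzmann equation), not from the Fokker--Planck operator $L_{FP}$, which acts only in the velocity variable; this does not affect the argument.
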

\begin{proof}
  Step 1. \emph{Estimate on $b$.} For any $\eta>0$, it holds that
  \begin{equation} \label{b0}
    \begin{aligned}
    &\frac{d}{dt}\sum_{|\alpha|\leq N-1}\sum_{j}\mathcal{I}_{\alpha,j}^b(u(t))
    +\frac{1}{2}\sum_{|\alpha|\leq N-1}\|\partial_x^{\alpha}\nabla_xb\|^2\\[2mm]
    \leq & C\eta\sum_{|\alpha|\leq N-1}\|\partial^{\alpha}_x\nabla_x(a,c)\|^2
    +C\eta\sum_{|\alpha|\leq N-1}{\epsilon}^2\|\partial_x^{\alpha}b\|^2\\[2mm]
    &+C_{\eta}\sum_{|\alpha|\leq N}\|\partial^{\alpha}_x\{{\bf{I}}-{\bf{P}}\}u\|_{\nu}^2
    +C_{\eta}\mathcal{E}_{q,l}(u)(t)\mathcal{D}_{q,l}(u)(t).
  \end{aligned}
  \end{equation}
  In fact, for fixed $j\in\{1,2,3\}$, one can deduce from (\ref{macro2}) that
  \begin{equation}
       \begin{aligned}\label{elliptic}
         &-\Delta_xb_j-\partial_j\partial_jb_j\\[2mm]
        =&-\partial_t\left[\frac{1}{2}\sum_{m\neq j}\partial_jA_{mm}(\{{\bf I}-{\bf{P}}\}u)
         -\sum_m\partial_mA_{jm}(\{{\bf{I}}-{\bf{P}}\}u)\right]\\[2mm]
         &+\frac{1}{2}\sum_{m\neq j}\partial_jA_{mm}(R+G)-\sum_m\partial_mA_{jm}(R+G).
       \end{aligned}
  \end{equation}
  Let $|\alpha|\leq N-1$. Apply $\partial_x^{\alpha}$ to the elliptic-type equation (\ref{elliptic}),
  multiply it by $\partial_x^{\alpha}b_j$, and then integrate it over $\mathbb{R}^3$ to find
  \begin{equation}\label{b1}
  \begin{aligned}
    &\frac{d}{dt}\mathcal{I}_{\alpha,j}^b(u(t))
     +\|\nabla_x\partial^{\alpha}_xb_j\|^2+\|\partial_j\partial^{\alpha}_xb_j\|^2\\[2mm]
      =&\left\langle\frac{1}{2}\sum_{m\neq j}\partial_j\partial^{\alpha}_xA_{mm}(\{{\bf{I}}-{\bf{P}}\}u)
      -\sum_m\partial_m\partial^{\alpha}_xA_{jm}(\{{\bf{I}}-{\bf{P}}\}u),\partial^{\alpha}_x\partial_tb_j\right\rangle\\[2mm]
      &+\left\langle\frac{1}{2}\sum_{m\neq j}\partial_j\partial^{\alpha}_xA_{mm}(R+G)
      -\sum_m\partial_m\partial^{\alpha}_xA_{jm}(R+G),\partial^{\alpha}_xb_j\right\rangle\\[2mm]
      =&I_1^b+I_1^b.
  \end{aligned}
  \end{equation}
  Using (\ref{macro1})$_2$ (the second equation of (\ref{macro1}))
  to replace $\partial_tb_j$, we get
  \begin{equation}
    \label{I_1^b}
    \begin{aligned}
    I_1^b\leq &\eta \|\partial^{\alpha}_x\partial_tb_j\|^2
               +C_{\eta}\sum_{|\beta|\leq N}\left\|\partial_x^{\beta}A(\{{\bf I}-{\bf P}\}u)\right\|^2\\[2mm]
         \leq &4\eta \left\{\|\partial^{\alpha}_x\nabla_x(a,c)\|^2+{\epsilon}^2\|\partial_x^{\alpha}b_j\|^2\right\}
    +C_{\eta}\sum_{|\beta|\leq N}\left\|\partial^{\beta}_x\{{\bf{I}}-{\bf{P}}\}u\right\|_{\nu}^2.
    \end{aligned}
  \end{equation}
  Here we have used (\ref{lemma1}). For $I_2^b$, integrating by parts implies
  \begin{equation} \label{I_2^b}
    \begin{aligned}
       I_2^b=&-\frac{1}{2}\sum_{m\neq j}\left\langle\partial^{\alpha}_xA_{mm}(R+G),\partial_j\partial^{\alpha}_xb_j\right\rangle
              +\sum_m\left\langle\partial^{\alpha}_xA_{jm}(R+G),\partial_m\partial^{\alpha}_xb_j\right\rangle\\[2mm]
            \leq &  \frac{1}{2}\|\nabla_x\partial^{\alpha}_xb_j\|^2+C\sum_{m}\|\partial^{\alpha}_xA_{jm}(R,G)\|^2\\[2mm]
            \leq &  \frac{1}{2}\|\nabla_x\partial^{\alpha}_xb_j\|^2
                    +C\sum_{|\beta|\leq N} \left\|\partial^{\beta}_x\{{\bf{I}}-{\bf{P}}\}u\right\|_{\nu}^2
                    +C\mathcal{E}_{q,l}(u)(t)\mathcal{D}_{q,l}(u)(t).
    \end{aligned}
  \end{equation}
  Thus, (\ref{b0}) follows by plugging (\ref{I_1^b}) and (\ref{I_2^b}) into (\ref{b1}) and then taking summation
   over $1\leq j\leq 3$ and $|\alpha|\leq N-1$.

  Step 2. \emph{Estimate on $c$.} For any $\eta>0$, it holds that
  \begin{equation}\label{c0}
  \begin{aligned}
    &\frac{d}{dt}\sum_{|\alpha|\leq N-1}\sum_{j}\mathcal{I}_{\alpha,j}^c(u(t))
    +\frac{1}{2}\sum_{|\alpha|\leq N-1}\|\nabla_x\partial_x^{\alpha}c\|^2\\[2mm]
    \leq & 3\eta\sum_{|\alpha|\leq N-1}\|\partial^{\alpha}_x\nabla_xb\|^2
    +12\eta\sum_{|\alpha|\leq N-1}{\epsilon}^2\|\partial_x^{\alpha}c\|^2\\[2mm]
    &+C_{\eta}\sum_{|\alpha|\leq N}\|\partial^{\alpha}_x\{{\bf{I}}-{\bf{P}}\}u\|_{\nu}^2
    +C_{\eta}\mathcal{E}_{q,l}(u)(t)\mathcal{D}_{q,l}(u)(t).
  \end{aligned}
  \end{equation}
  Indeed, applying $\partial_x^{\alpha}$ with $|\alpha|\leq N-1$ to the macroscopic equation (\ref{macro2})$_3$,
  multiplying it by $\partial_j\partial_x^{\alpha}c$ and then integrating it over $\mathbb{R}^3$, we have
  \begin{equation}\label{c1}
  \begin{aligned}
    &\frac{d}{dt}\mathcal{I}_{\alpha,j}^c(u(t))+\|\partial_j\partial^{\alpha}_xc\|^2\\[2mm]
    =&\langle\partial^{\alpha}_xB_j(\{{\bf{I}}-{\bf{P}}\}u),\partial_t\partial_j\partial^{\alpha}_xc\rangle
    +\left\langle\partial^{\alpha}_xB_j(R+G),\partial_j\partial^{\alpha}_xc\right\rangle\\[2mm]
    =&I_1^c+I_2^c.
  \end{aligned}
  \end{equation}
  Use $(\ref{macro1})_3$ to replace $\partial_t c$ and estimate $I_1^c$ as
  \begin{equation} \label{I_1^c}
  \begin{aligned}
    I_1^c=&-\langle\partial_j\partial_{x}^{\alpha}B_j(\{{\bf I}-{\bf P}\}u),\partial_x^{\alpha}\partial_tc\rangle\\[2mm]
    \leq &\eta\|\partial_x^{\alpha}\partial_t c\|^2+C_{\eta}\|\partial_j\partial_{x}^{\alpha}B_j(\{{\bf I}-{\bf P}\}u)\|^2\\[2mm]
    \leq &\eta\left\{\|\partial^{\alpha}_x\nabla_xb\|^2+4\epsilon^2(t)\|\partial_x^{\alpha}c\|^2\right\}
    +C_{\eta}\sum_{|\beta|\leq N}\left\|\partial^{\beta}_x\{{\bf{I}}-{\bf{P}}\}u\right\|_{\nu}^2.
  \end{aligned}
  \end{equation}
  $I_2^c$ is bounded by
  \begin{equation} \label{I_2^c}
  \begin{aligned}
      I_2^c\leq &\frac{1}{2}\|\partial_j\partial^{\alpha}_xc\|^2
  +\|\partial^{\alpha}_xB_j(R,G)\|^2\\[2mm]
   \leq &\frac{1}{2}\|\partial_j\partial^{\alpha}_xc\|^2+C\mathcal{E}_{q,l}(u)(t)\mathcal{D}_{q,l}(u)(t).
  \end{aligned}
  \end{equation}
  Thus, (\ref{c0}) follows
  by plugging (\ref{I_1^c}) and (\ref{I_2^c}) into $(\ref{c1})$,
  and summing it over $1\leq j\leq 3$ and $|\alpha|\leq N-1$.

  Step 3. \emph{Estimate on $a$.}
  Let $|\alpha|\leq N-1$. Apply $\partial_x^{\alpha}$ to(\ref{macro1})$_2$,
  multiply it by $\partial_x^{\alpha}\nabla_x a$ and then integrate it over $\mathbb{R}^3$ to discover
  \begin{equation}\label{a1}
  \begin{aligned}
    &\partial_t\langle\partial^{\alpha}_xb,\partial^{\alpha}_x\nabla_x a\rangle
    +\|\partial^{\alpha}_x\nabla_xa\|^2\\[2mm]
    =&-\langle2\partial^{\alpha}_x\nabla_xc+\partial^{\alpha}_x\nabla_xA(\{{\bf I}-{\bf P}\}u)
    +{\epsilon}\partial^{\alpha}_xb,\partial^{\alpha}_x\nabla_xa\rangle
    +\langle\partial^{\alpha}_xb,\partial_t\partial^{\alpha}_x\nabla_xa\rangle\\[2mm]
    =&-\langle2\partial^{\alpha}_x\nabla_xc+\partial^{\alpha}_x\nabla_xA(\{{\bf I}-{\bf P}\}u)
    +{\epsilon}\partial^{\alpha}_xb,\partial^{\alpha}_x\nabla_xa\rangle
    +\langle\partial^{\alpha}_x\nabla_x\cdot b,\partial^{\alpha}_x\nabla_x\cdot b\rangle\\[2mm]
    \leq& \frac{1}{2}\|\partial^{\alpha}_x\nabla_xa\|^2
    +{\epsilon}^2\|\partial^{\alpha}_xb\|^2
    +C\|\partial^{\alpha}_x\nabla_x(b,c)\|^2
    +C\|\partial^{\alpha}_x\nabla_xA(\{{\bf I}-{\bf P}\}u)\|^2.
  \end{aligned}
  \end{equation}
Here we used the conservation of mass (\ref{macro1})$_1$.
Take summation (\ref{a1}) over $|\alpha|\leq N-1$ to get
\begin{equation}\label{a0}
  \begin{aligned}
    &\frac{d}{dt}\sum_{|\alpha|\leq N-1}\langle\partial^{\alpha}_xb,\nabla_x\partial^{\alpha}_xa\rangle
    +\frac{1}{2}\sum_{|\alpha|\leq N-1}\|\partial^{\alpha}_x\nabla_xa\|^2\\[2mm]
    \lesssim&\sum_{|\alpha|\leq N-1}\|\partial^{\alpha}_x\nabla_x(b,c)\|^2+
        \sum_{|\alpha|\leq N-1}{\epsilon}^2\|\partial^{\alpha}_xb\|^2
    +C\sum_{|\alpha|\leq N}\|\partial^{\alpha}_x\{{\bf {I}-\bf{P}}\}u\|_{\nu}^2.
  \end{aligned}
\end{equation}

Step 4. \emph{Combination.}
We have finished the estimates of $a,b,c$.
With them in hand, let us multiply $(\ref{b0})$ and $(\ref{c0})$ by a constant $M>0$
and take summation of both of them as well as (\ref{a0}).
One can first choose $M>0$ sufficiently large such that the first term on the right-hand side
of (\ref{a0}) can be absorbed
by the dissipation of $b$ and $c$. By fixing $M>0$, one can choose $\eta>0$ sufficiently small
such that the first terms on the right-hand side of  $(\ref{b0})$ and $(\ref{c0})$
are absorbed by the full dissipation of $b$ and $c$. Hence, we have proved (\ref{macro}).
Cauchy's inequality and (\ref{lemma1 1}) yield
\begin{equation*}
  |\mathcal{E}_{int}(u)(t)|\lesssim \sum_{|\alpha|\leq N-1}\left\{\|\partial^{\alpha}_x\nabla_{x}(a,b,c)(t)\|^2
  +\|\partial^{\alpha}_x\{{\bf {I}-\bf{P}}\}u\|^2+\|\partial_{x}^{\alpha} b\|^2\right\},
\end{equation*}
which implies (\ref{E_int}).
Therefore one has finished the proof of Theorem 2.1.
\end{proof}

\section{Global Existence}
In this section,  we shall devote ourselves to obtaining the existence of classical solutions to (\ref{u})
globally in time. For this purpose, we first collect some estimates for the linearized Fokker-Planck operator $L_{FP}$
and the collision operators $L$ and $\Gamma$.

For the linearized Fokker-Planck operator $L_{FP}$, we have the following two results. The first one is concerned with the dissipative property of the
linearized Fokker-Planck operator $L_{FP}$ without weight
\begin{lemma} (\cite{Arnold01}, \cite{Duan10})
  $L_{FP}$ is a linear self-adjoint operator with respect to the duality induced by the $L_{\xi}^2$-scalar product.
  Furthermore, there exists a constant $\lambda_{FP} > 0$ such that
  \begin{equation} \label{L_FP0}
    -( u, L_{FP}u) \geq \lambda_{FP}\|\{{\bf I}-{\bf P}_0\}u\|^2.
  \end{equation}
\end{lemma}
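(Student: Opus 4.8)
The plan is to handle the two assertions in turn: the first is elementary, and the second is a spectral-gap (Poincar\'e-type) statement for the Ornstein--Uhlenbeck operator, which is essentially the content of \cite{Arnold01,Duan10}. \emph{Self-adjointness.} Since $L_{FP}=\tfrac14(6-|\xi|^2)+\Delta_{\xi}$ is a Schr\"odinger operator, for Schwartz functions $u,v$ one has $\langle L_{FP}u,v\rangle=\langle u,L_{FP}v\rangle$ at once: multiplication by $\tfrac14(6-|\xi|^2)$ is symmetric, while $\langle\Delta_{\xi}u,v\rangle=-\langle\nabla_{\xi}u,\nabla_{\xi}v\rangle=\langle u,\Delta_{\xi}v\rangle$ after integrating by parts twice, the boundary terms vanishing by the rapid decay of the relevant functions. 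Equivalently, $M^{1/2}L_{FP}(M^{-1/2}\,\cdot\,)=\Delta_{\xi}+\nabla_{\xi}\cdot(\xi\,\cdot\,)$ is the classical Fokker--Planck operator, which is symmetric on $L^2(M^{-1}\,d\xi)$, and conjugating back to $L^2_{\xi}$ gives the claim. That $L_{FP}$ is in fact essentially self-adjoint on a natural core is standard for confining Schr\"odinger operators and is supplied by the cited works; only the formal identity is needed for the energy method below.

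\emph{Coercivity.} I would conjugate by $M^{1/2}$. Writing $u=M^{1/2}h$ and using $\nabla_{\xi}M^{1/2}=-\tfrac{\xi}{2}M^{1/2}$, one integration by parts gives, at each fixed $x$,
\[ -(u,L_{FP}u)=\|\nabla_{\xi}u\|^2+\tfrac14\big\langle(|\xi|^2-6)u,u\big\rangle=\int_{\mathbb{R}^3}M\,|\nabla_{\xi}h|^2\,d\xi , \]
the last equality being a short computation in which the cross terms assemble exactly (this is the statement that $L_{FP}$ is the unitary conjugate of the Ornstein--Uhlenbeck operator). On the other hand, with $a=\langle M^{1/2},u\rangle=\int_{\mathbb{R}^3}Mh\,d\xi$ one has ${\bf P}_0u=aM^{1/2}$ and $\|\{{\bf I}-{\bf P}_0\}u\|^2=\int_{\mathbb{R}^3}M\,(h-a)^2\,d\xi$, so (\ref{L_FP0}) is exactly the Gaussian Poincar\'e inequality $\int_{\mathbb{R}^3}M|\nabla_{\xi}h|^2\,d\xi\ge\int_{\mathbb{R}^3}M(h-a)^2\,d\xi$, which holds with the sharp constant $\lambda_{FP}=1$ (a further integration in $x$ then yields the stated form). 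Alternatively, avoiding the conjugation: a direct computation gives $L_{FP}M^{1/2}=0$, and $-L_{FP}+\tfrac32=-\Delta_{\xi}+\tfrac14|\xi|^2$ is the three-dimensional harmonic oscillator with spectrum $\{\tfrac32+n:n=0,1,2,\dots\}$; hence $\mathrm{spec}(-L_{FP})=\{0,1,2,\dots\}$ with $0$ a simple eigenvalue (eigenfunction $M^{1/2}$), so by self-adjointness $-L_{FP}\ge 1$ on the range of ${\bf I}-{\bf P}_0$, which again is (\ref{L_FP0}).

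\emph{Where the care goes.} There is no genuine obstacle: the only substantive input is the spectral gap of the Ornstein--Uhlenbeck operator, equivalently the Gaussian Poincar\'e inequality, which is classical. What requires attention is the algebra in the conjugation $u=M^{1/2}h$ showing that $\|\nabla_{\xi}u\|^2+\tfrac14\langle(|\xi|^2-6)u,u\rangle$ collapses to $\int_{\mathbb{R}^3}M|\nabla_{\xi}h|^2\,d\xi$, together with --- if one wants rigour beyond the formal identities that already suffice for the a priori estimates --- the justification of the integrations by parts and of essential self-adjointness on the chosen core, for which one invokes \cite{Arnold01,Duan10}.
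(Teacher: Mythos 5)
The paper does not prove this lemma; it simply cites \cite{Arnold01} and \cite{Duan10}. Your reconstruction is correct and is indeed the standard argument behind those citations: the algebraic collapse $\|\nabla_{\xi}u\|^2+\tfrac14\langle(|\xi|^2-6)u,u\rangle=\int M|\nabla_{\xi}h|^2\,d\xi$ (with $u=M^{1/2}h$) checks out, the right-hand side $\|\{{\bf I}-{\bf P}_0\}u\|^2=\int M(h-a)^2\,d\xi$ is identified correctly, and the conclusion is the Gaussian Poincar\'e inequality (equivalently your harmonic-oscillator spectral-gap argument, since $-L_{FP}+\tfrac32=-\Delta_{\xi}+\tfrac14|\xi|^2$ has spectrum $\{n+\tfrac32\}$ so $-L_{FP}\ge 1$ on $(M^{1/2})^{\perp}$), with a final $x$-integration to pass from $L^2_{\xi}$ to $L^2_{x,\xi}$.
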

For the dissipative property of the
linearized Fokker-Planck operator $L_{FP}$ with the weight $w_q^{l-|\beta|}$, we have
\begin{lemma} It holds that for any $l\geq 0$,
  \begin{equation} \label{Lfp w1}
    \begin{aligned}
      \left(L_{FP}\partial_{\beta}^{\alpha}u,w_q^{2(l-|\beta|)}\partial_{\beta}^{\alpha}u\right)
      \leq- \frac{1}{2}\lambda_{FP}
      \left\|\{{\bf{I}}-{\bf{P}}_0\}(w_{q}^{l-|\beta|}\partial_{\beta}^{\alpha}u)\right\|^2
      +C(q-\gamma)^{2}\left\|w_{q}^{l-|\beta|}\partial_{\beta}^{\alpha}u\right\|_{\nu}^2.
    \end{aligned}
  \end{equation}
\end{lemma}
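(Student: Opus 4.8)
The plan is to strip the weight off $L_{FP}$ by conjugation and then appeal to the unweighted coercivity of the preceding lemma. Write $g:=\partial_\beta^\alpha u$ and $w:=w_q^{l-|\beta|}(\xi)=\langle\xi\rangle^{(q-\gamma)(l-|\beta|)}$; since $l\ge N\ge|\beta|$ we have $w\ge 1$, and $w$ is smooth of polynomial growth, so all integrations by parts in $\xi$ below are free of boundary terms. The zeroth-order part $\frac14(6-|\xi|^2)$ of $L_{FP}$ is a multiplication operator and hence commutes with $w$, whereas $\Delta_\xi(wg)=w\Delta_\xi g+2\nabla_\xi w\cdot\nabla_\xi g+(\Delta_\xi w)g$. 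Consequently
\begin{equation*}
  L_{FP}(wg)=w\,L_{FP}g+2\nabla_\xi w\cdot\nabla_\xi g+(\Delta_\xi w)g .
\end{equation*}

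Pairing this identity with $wg$ in $L^2_\xi$ and rearranging yields
\begin{equation*}
  \big(L_{FP}g,\ w^2 g\big)=\big(L_{FP}(wg),\ wg\big)-\big(2\nabla_\xi w\cdot\nabla_\xi g+(\Delta_\xi w)g,\ wg\big),
\end{equation*}
and an integration by parts in $\xi$ turns the commutator term into $\big(2\nabla_\xi w\cdot\nabla_\xi g,\ wg\big)=\int w\,\nabla_\xi w\cdot\nabla_\xi(g^2)\,d\xi=-\int\big(|\nabla_\xi w|^2+w\Delta_\xi w\big)g^2\,d\xi$, so the two error pieces combine and the identity collapses to the clean form
\begin{equation*}
  \big(L_{FP}g,\ w^2 g\big)=\big(L_{FP}(wg),\ wg\big)+\int_{\mathbb{R}^3}|\nabla_\xi w|^2\,g^2\,d\xi .
\end{equation*}
By the unweighted coercivity applied to $wg$, the first term on the right is $\le-\lambda_{FP}\|\{{\bf I}-{\bf P}_0\}(wg)\|^2$. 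Hence it remains only to control $\int|\nabla_\xi w|^2 g^2\,d\xi$ by $\frac12\lambda_{FP}\|\{{\bf I}-{\bf P}_0\}(wg)\|^2$ plus $C(q-\gamma)^2\|wg\|_\nu^2$.

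Since $\nabla_\xi w=(q-\gamma)(l-|\beta|)\langle\xi\rangle^{(q-\gamma)(l-|\beta|)-2}\xi$, one has $|\nabla_\xi w|^2\lesssim(q-\gamma)^2\langle\xi\rangle^{-2}w^2$ (the fixed factor $(l-|\beta|)^2\le l^2$ going into the constant). For $-2\le\gamma\le 1$ this already suffices: $\langle\xi\rangle^{-2}\le\langle\xi\rangle^{\gamma}\sim\nu(\xi)$, so $\int|\nabla_\xi w|^2 g^2\lesssim(q-\gamma)^2\|wg\|_\nu^2$ and the bound closes with even the full $\lambda_{FP}$. The obstacle, and the reason for the factor $\frac12$ in the statement, is the very soft range $-3<\gamma<-2$, where $\langle\xi\rangle^{-2}$ is no longer dominated by $\nu(\xi)$. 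There the plan is to cut $\mathbb{R}^3_\xi=\{\langle\xi\rangle\le R\}\cup\{\langle\xi\rangle>R\}$ and treat the pieces differently: on $\{\langle\xi\rangle\le R\}$, write $\langle\xi\rangle^{-2}=\langle\xi\rangle^{\gamma}\langle\xi\rangle^{-2-\gamma}$ and use $\langle\xi\rangle^{-2-\gamma}\le R^{-2-\gamma}$ (recall $-2-\gamma\in(0,1)$), so this part is $\lesssim(q-\gamma)^2R^{-2-\gamma}\|wg\|_\nu^2$; on $\{\langle\xi\rangle>R\}$, use $\langle\xi\rangle^{-2}\le R^{-2}$ together with the fact that ${\bf P}_0(wg)$ is a scalar multiple of $M^{1/2}$, whose tail is Gaussian-small, $\int_{\langle\xi\rangle>R}|{\bf P}_0(wg)|^2\le Ce^{-R^2/4}|\langle M^{1/2},wg\rangle|^2\le Ce^{-R^2/4}\|wg\|_\nu^2$ (using $\nu^{-1/2}M^{1/2}\in L^2_\xi$), so that $\int_{\langle\xi\rangle>R}w^2g^2\le 2\|\{{\bf I}-{\bf P}_0\}(wg)\|^2+Ce^{-R^2/4}\|wg\|_\nu^2$ and this part is $\lesssim(q-\gamma)^2R^{-2}\|\{{\bf I}-{\bf P}_0\}(wg)\|^2+(q-\gamma)^2R^{-2}e^{-R^2/4}\|wg\|_\nu^2$. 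Choosing $R$ large enough makes the coefficient $C(q-\gamma)^2R^{-2}$ of $\|\{{\bf I}-{\bf P}_0\}(wg)\|^2$ at most $\frac12\lambda_{FP}$, which is then moved to the left-hand side, while the remaining errors are kept of order $(q-\gamma)^2\|wg\|_\nu^2$. I expect this last balancing --- selecting $R$ so that the $\{\langle\xi\rangle\le R\}$ error and the Fokker-Planck coercivity that must be spent on $\{\langle\xi\rangle>R\}$ are simultaneously under control with the right power of $(q-\gamma)$ --- to be the technical heart of the argument, and it is exactly this mechanism (the $\xi$-gradient of the weight outgrowing $\nu$) that later forces one, in the global energy estimate, to combine the coercivity of $L$ and $L_{FP}$ and to invoke the smallness of $(q-\gamma)^2\epsilon$.
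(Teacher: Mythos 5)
Your proof is correct and follows essentially the same route as the paper: both reduce to the commutator identity $\bigl(L_{FP}g, w^2 g\bigr)=\bigl(L_{FP}(wg),wg\bigr)+\|(\nabla_\xi w)\,g\|^2$ by integration by parts, apply the unweighted coercivity of $L_{FP}$ to $wg$, bound $|\nabla_\xi w|^2\lesssim (q-\gamma)^2\langle\xi\rangle^{-2}w^2$, and split $\mathbb R^3_\xi$ at $\langle\xi\rangle=R$ to absorb the high-$|\xi|$ piece into $\tfrac12\lambda_{FP}\|\{{\bf I}-{\bf P}_0\}(wg)\|^2$ and the low-$|\xi|$ piece into $\|wg\|_\nu^2$. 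The only (harmless) differences are that you remark that no cutoff is needed when $-2\leq\gamma\leq 1$, and that you spell out the Gaussian tail bound for ${\bf P}_0(wg)$ on $\{\langle\xi\rangle>R\}$, whereas the paper simply uses $\|{\bf P}_0(wg)\|^2\lesssim\|wg\|_\nu^2$ globally before cutting; you also have the correct formula $\nabla_\xi w_q^{l-|\beta|}=(q-\gamma)(l-|\beta|)w_q^{l-|\beta|}\xi/\langle\xi\rangle^2$, which fixes a pair of typos in the paper's displayed gradient.
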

\begin{proof} Integrating by parts yields
  \begin{equation}\label{Lfp e}
  \begin{aligned}
    & \left(L_{FP}\partial_{\beta}^{\alpha}u,w^{2(l-|\beta|)}\partial_{\beta}^{\alpha}u\right)
    -\left( L_{FP}(w_{q}^{l-|\beta|}\partial_{\beta}^{\alpha}u),
    w_{q}^{l-|\beta|}\partial_{\beta}^{\alpha}u\right)\\[2mm]
    =&-\left(\nabla_{\xi}\cdot\left(\partial_{\beta}^{\alpha}u\nabla_{\xi}w_{q}^{l-|\beta|}\right)
    +\nabla_{\xi}w_{q}^{l-|\beta|}\cdot\nabla_{\xi}\partial_{\beta}^{\alpha}u,
    w_{q}^{l-|\beta|}\partial_{\beta}^{\alpha}u\right)\\[2mm]
    =&\left(\nabla_{\xi}w_{q}^{l-|\beta|}\partial_{\beta}^{\alpha}u,\nabla_{\xi}(w_{q}^{l-|\beta|}\partial_{\beta}^{\alpha}u)\right)
    -\left(\nabla_{\xi}w_{q}^{l-|\beta|}\cdot\nabla_{\xi}\partial_{\beta}^{\alpha}u,
    w_{q}^{l-|\beta|}\partial_{\beta}^{\alpha}u\right)\\[2mm]
    =&\left( \nabla_{\xi}w_{q}^{l-|\beta|}\partial_{\beta}^{\alpha}u,
    \nabla_{\xi}w_{q}^{l-|\beta|}\partial_{\beta}^{\alpha}u\right)\\[2mm]
    \leq & C(q-\gamma)^2\left\|\left(\chi_{|\xi|>R}+\chi_{|\xi| \leq R}\right)\langle \xi \rangle^{-1}w_{q}^{l-|\beta|}\partial_{\beta}^{\alpha}u\right\|^2
  \end{aligned}
  \end{equation}
  for each $R>0$.
  Here, we have used the fact that
  $$
  \nabla_{\xi}w_{q}^{l-|\beta|}
  =(q-|\beta|)(1-\gamma)w_{q}^{l-|\beta|}\frac{\xi}{1+|\xi|^2}.
  $$
  We estimate the terms on the right hand side of (\ref{Lfp e}). First,
  \begin{equation} \label{>R}
    \begin{aligned}
      &\left\|\chi_{|\xi|>R}\langle \xi \rangle^{-1}w_{q}^{l-|\beta|}\partial_{\beta}^{\alpha}u\right\|^2\\[2mm]
       \leq& R^{-2}\left\|\{{\bf  I}-{\bf  P}_0\}
       \left(w_{q}^{l-|\beta|}\partial_{\beta}^{\alpha}u\right)\right\|^2
      +C\left\|{\bf  P}_0\left(w_{q}^{l-|\beta|}\partial_{\beta}^{\alpha}u\right)\right\|^2\\[2mm]
      \leq & R^{-2}\left\|\{{\bf  I}-{\bf  P}_0\}
         \left(w_{q}^{l-|\beta|}\partial_{\beta}^{\alpha}u\right)\right\|^2 +C\left\|w_{q}^{l-|\beta|}\partial_{\beta}^{\alpha}u\right\|_{\nu}^2.
    \end{aligned}
  \end{equation}
  If $\xi$ is bounded, then $\langle \xi \rangle^{-2}\sim \nu(\xi)$ which implies
  \begin{equation}
    \label{<R}
    \left\|\chi_{|\xi|\leq R}\langle \xi \rangle^{-1}w_{q}^{l-|\beta|}\partial_{\beta}^{\alpha}u\right\|^2
    \lesssim  \left\|w_{q}^{l-|\beta|}\partial_{\beta}^{\alpha}u\right\|_{\nu}^2.
  \end{equation}
  Plugging (\ref{>R}) and (\ref{<R}) into (\ref{Lfp e}), and noticing that
  \begin{equation*}
     -\left( L_{FP}(w_{q}^{l-|\beta|}\partial_{\beta}^{\alpha}u),
    w_{q}^{l-|\beta|}\partial_{\beta}^{\alpha}u\right)
    \geq \lambda_{FP}
    \left\|\{{\bf{I}}-{\bf{P}}_0\}(w_{q}^{l-|\beta|}\partial_{\beta}^{\alpha}u)\right\|^2
  \end{equation*}
  from (\ref{L_FP0}),
  one can prove (\ref{Lfp w1}) by choosing $R>0$ sufficiently large.
\end{proof}

For the corresponding weighed estimates on the linearized Boltzmann collision operator $L$ and the nonlinear collision operator $\Gamma$, we have
\begin{lemma} \label{collision} (\cite{Guo06}, \cite{Guo03})
  Consider the inverse power law with $-3<\gamma\leq 1$.
  If $\eta>0$ and $m\geq 0$,
  then there are $C_{\eta}, C>0$, such that
  \begin{align}
  \label{L1}
   -\left(\langle \xi \rangle^{2m}\partial_{\beta}Lg,\partial_{\beta}g \right)
  \geq &~~\frac{1}{2}\left\|\langle \xi \rangle^{m}\partial_{\beta}g\right\|_{\nu}^2
          -\eta\sum_{|\beta_1|\leq|\beta| }\left\|\langle \xi \rangle^{m}\partial_{\beta_1}g\right\|_{\nu}^2
          -C_{\eta}\|g\|_{\nu}^2,\\[2mm]
  \label{Gamma0}
  \left|\left\langle \langle \xi \rangle^{2m}\partial_{\beta}\Gamma(f_1,f_2),\partial_{\beta}h \right\rangle\right|
  \lesssim &\sum_{i,j}\sum_{\beta_1+\beta_2\leq \beta}
          \left|\langle \xi \rangle^{m}\partial_{\beta_1}f_i\right|\left|\langle \xi \rangle^{m}\partial_{\beta_2}f_j\right|_{\nu}
               \left|\langle \xi \rangle^{m}\partial_{\beta}h\right|_{\nu}.
  \end{align}
\end{lemma}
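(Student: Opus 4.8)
The two inequalities are of the type established by Guo \cite{Guo03,Guo06}; the plan is to treat them by the familiar decomposition of the collision operators together with the Grad-type kernel estimates, keeping careful track of how the polynomial weight $\langle\xi\rangle^{2m}$ interacts with the velocity derivatives $\partial_\beta$.

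For \eqref{L1}, I would use the splitting $L=-\nu(\xi)+K$ recorded above and handle the two pieces separately. For the multiplication operator $-\nu(\xi)$, Leibniz's rule gives $\partial_\beta(\nu g)=\sum_{\beta_1\le\beta}\binom{\beta}{\beta_1}(\partial_{\beta_1}\nu)(\partial_{\beta-\beta_1}g)$; the top term $\beta_1=0$ produces precisely $\|\langle\xi\rangle^m\partial_\beta g\|_\nu^2$, while for $|\beta_1|\ge 1$ one invokes the elementary bound $|\partial_{\beta_1}\nu(\xi)|\lesssim\langle\xi\rangle^{\gamma-|\beta_1|}\lesssim\nu(\xi)$ and Young's inequality to absorb the cross terms into $\eta\|\langle\xi\rangle^m\partial_\beta g\|_\nu^2+C_\eta\sum_{|\beta_1|<|\beta|}\|\langle\xi\rangle^m\partial_{\beta_1}g\|_\nu^2$. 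The substantive point is the operator $K$: one must show
\[
\left|\left(\langle\xi\rangle^{2m}\partial_\beta Kg,\partial_\beta g\right)\right|\le\eta\,\|\langle\xi\rangle^m\partial_\beta g\|_\nu^2+C_\eta\|g\|_\nu^2 .
\]
Writing $Kg(\xi)=\int k(\xi,\xi_*)g(\xi_*)\,d\xi_*$, I would first move $\xi$ out of the singular factor $|\xi-\xi_*|^{\gamma}q_0(\theta)$ by the change of variables $\xi_*\mapsto\xi-\xi_*$, so that only Maxwellian factors of the form $M^{1/2}(\xi-\xi_*)M^{1/2}(\xi_*')$ etc. carry $\xi$; then each $\partial_{\xi_i}$ either lands on a Gaussian or (after one integration by parts to avoid differentiating the kernel) is transferred to $g$, and the classical Grad pointwise bounds — exponential decay of $k$ and all its $\xi$-derivatives in $|\xi-\xi_*|$ and in $(|\xi|^2-|\xi_*|^2)/|\xi-\xi_*|$ — persist. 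Since $\langle\xi\rangle\sim\langle\xi_*\rangle$ on the effective support of the kernel, these bounds swallow the weight $\langle\xi\rangle^{2m}$; finally, splitting the $\xi$-integration into $\{|\xi|\le R_\eta\}$, where $\nu(\xi)\sim 1$ and the smoothing of $K$ gives the $C_\eta\|g\|_\nu^2$ contribution, and $\{|\xi|>R_\eta\}$, where the kernel's smallness yields the $\eta$-term, completes \eqref{L1}.

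For \eqref{Gamma0}, I would start from $\Gamma(f_1,f_2)=M^{-1/2}Q(M^{1/2}f_1,M^{1/2}f_2)$ and use the collision identity $M(\xi)M(\xi_*)=M(\xi')M(\xi_*')$ to write $\Gamma=\Gamma_{\mathrm{gain}}-\Gamma_{\mathrm{loss}}$ with
\[
\Gamma_{\mathrm{gain}}(f_1,f_2)(\xi)=\int q\,M^{1/2}(\xi_*)f_1(\xi_*')f_2(\xi')\,d\omega\,d\xi_*,\qquad
\Gamma_{\mathrm{loss}}(f_1,f_2)(\xi)=f_2(\xi)\!\int q\,M^{1/2}(\xi_*)f_1(\xi_*)\,d\omega\,d\xi_* .
\]
Because $\xi'$ and $\xi_*'$ are affine in $\xi$, applying $\partial_\beta$ — again after sending $\xi$ out of $q$ and integrating by parts in $\xi_*$ where a derivative would otherwise hit the kernel — produces a finite sum over $\beta_1+\beta_2\le\beta$ of terms $\int q\,(\partial_{\beta_0}M^{1/2}(\xi_*))(\partial_{\beta_1}f_i)(\,\cdot\,)(\partial_{\beta_2}f_j)(\,\cdot\,)\,d\omega\,d\xi_*$, with each $\partial_{\beta_0}M^{1/2}(\xi_*)$ still Gaussian; this is the origin of the pairing sum $\sum_{i,j}$. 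Inserting $\langle\xi\rangle^{2m}$, using the transfer inequality $\langle\xi\rangle\le\langle\xi'\rangle\langle\xi_*'\rangle$ for the gain term (and nothing for the loss term, where the weight already sits at $\xi$), pairing against $\partial_\beta h(\xi)$, and applying Cauchy–Schwarz in $d\omega\,d\xi_*$ with one copy of $\nu(\xi)^{1/2}$ split onto the second factor, one arrives at the trilinear bound on the right of \eqref{Gamma0}; the exponential factor from $M^{1/2}(\xi_*)$ absorbs all surplus polynomial weight in $\xi_*$, and for $-3<\gamma<0$ the remaining $|\xi-\xi_*|^{\gamma}$ singularity is harmless since it is locally integrable in $\mathbb{R}^3$.

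The main obstacle is the $K$-term estimate inside \eqref{L1}: one has to verify that the Grad decay estimates genuinely survive an arbitrary number of velocity differentiations and are strong enough to dominate the polynomial weight $\langle\xi\rangle^{2m}$, and that the $\xi$-domain splitting can be arranged so that the constant multiplying $\|g\|_\nu^2$ depends only on $\eta$. Once this is granted, the remaining work in both inequalities is the routine bookkeeping of the Leibniz expansions, the integrations by parts that keep the kernel undifferentiated, and the weight-transfer inequalities.
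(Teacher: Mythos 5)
The paper does not actually prove this lemma: it is stated as a citation to Guo's works \cite{Guo06}, \cite{Guo03} and used as a black box. Your sketch is therefore a reconstruction of the argument from those references rather than a parallel to anything in the present paper, and in broad outline (split $L=-\nu+K$, Leibniz for the multiplication operator $\nu$, Grad-type pointwise kernel bounds for $K$, gain/loss splitting plus the collision identity and the transfer inequality $\langle\xi\rangle\le\langle\xi'\rangle\langle\xi_*'\rangle$ for $\Gamma$) it is the right one.

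There is, however, one slip in the $\nu$-cross-term bookkeeping that actually matters in the application. You claim that Young's inequality absorbs the terms with $|\beta_1|\ge 1$ into
$\eta\|\langle\xi\rangle^m\partial_\beta g\|_\nu^2 + C_\eta\sum_{|\beta_1|<|\beta|}\|\langle\xi\rangle^m\partial_{\beta_1}g\|_\nu^2$,
i.e.\ you put a \emph{large} constant $C_\eta$ in front of the intermediate-derivative terms. But the statement of \eqref{L1} puts a \emph{small} coefficient $\eta$ on every $\|\langle\xi\rangle^m\partial_{\beta_1}g\|_\nu^2$ with $|\beta_1|\le|\beta|$, and reserves the large constant $C_\eta$ exclusively for the underived quantity $\|g\|_\nu^2$. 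This distinction is not cosmetic: in the weighted estimate of Step 3 of the global-existence proof (display \eqref{mixed2}), the lemma is invoked with $\langle\xi\rangle^m=w_q^{l-|\beta|}$, and the resulting sum $\eta\sum_{|\beta_1|\le|\beta|}\|w_q^{l-|\beta_1|}\partial_{\beta_1}^\alpha u_2\|_\nu^2$ is later absorbed into the dissipation precisely because its coefficient is a small $\eta$; with $C_\eta$ there the combination in Step 4 could not close. To get the small coefficient you must exploit the gain $|\partial_{\beta_1}\nu(\xi)|\lesssim \nu(\xi)\langle\xi\rangle^{-|\beta_1|}$ together with a large-/small-$|\xi|$ split: on $\{\langle\xi\rangle>R\}$ the extra factor $\langle\xi\rangle^{-|\beta_1|}\le R^{-1}$ furnishes the $\eta$, while on $\{\langle\xi\rangle\le R\}$ the polynomial weight and $\nu$ are bounded and, combined with the compactness/smoothing of the collision operator, one lands on the underived $C_\eta\|g\|_\nu^2$ rather than on weighted norms of $\partial_{\beta-\beta_1}g$. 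Your closing paragraph does flag that the domain-splitting "can be arranged" to make things work; the point above is that this splitting is needed for the $\nu$-part as well, not only for the $K$-part, and its output must be an $\eta$-coefficient on all the intermediate derivatives. Apart from this, the sketch of the $K$-estimate (change of variables to keep the singular factor away from $\xi$, integration by parts to avoid differentiating the kernel, Grad decay dominating the polynomial weight) and the sketch of \eqref{Gamma0} are faithful to the cited arguments.
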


\begin{lemma} It holds that for any $l\geq 0$,
  \begin{align}
    \label{Gamma w1}
    \left(\partial_x^{\alpha}\Gamma(u,u),w_q^{2l}\partial_x^{\alpha}u\right)
    \lesssim &~\mathcal{E}_{q,l}(u)^{1/2}(t)\mathcal{D}_{q,l}(u)(t),\\[2mm]
    \label{Gamma w0}
    (\partial_{\beta}^{\alpha}\Gamma(u,u),w_q^{2(l-|\beta|)}\partial_{\beta}^{\alpha}\{{\bf{I}}-{\bf{P}}\}u)
    \lesssim  &~\mathcal{E}_{q,l}(u)^{1/2}(t)\mathcal{D}_{q,l}(u)(t).
  \end{align}
\end{lemma}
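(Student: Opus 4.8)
The strategy is to deduce both inequalities from the pointwise-in-$(t,x)$ estimate \eqref{Gamma0} of Lemma~\ref{collision}, applied with $m = l-|\beta|$ (so that $\langle\xi\rangle^{m} = \langle\xi\rangle^{l-|\beta|}$ and $w_q^{l-|\beta|}=\langle\xi\rangle^{(q-\gamma)(l-|\beta|)}$ only differs by harmless powers of $\langle\xi\rangle$ already absorbed into the $\nu$-norms via $\nu\sim\langle\xi\rangle^\gamma$), followed by integration in $x$ and Sobolev embedding $H^2(\mathbb R^3_x)\hookrightarrow L^\infty(\mathbb R^3_x)$; this is exactly where the hypothesis $N\ge 4$ is used, so that one of the two low-order factors in the Leibniz expansion always has at most $N/2\le N-2$ $x$-derivatives and can be put in $L^\infty_x$.

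For \eqref{Gamma w1}, I would expand $\partial_x^{\alpha}\Gamma(u,u)=\sum_{\alpha_1+\alpha_2=\alpha}\Gamma(\partial_x^{\alpha_1}u,\partial_x^{\alpha_2}u)$ by bilinearity, pair with $w_q^{2l}\partial_x^{\alpha}u$, and apply \eqref{Gamma0} (with $\beta=0$, $m=l$) inside the $x$-integral to bound the integrand by
$
\sum_{\alpha_1+\alpha_2=\alpha}
| w_q^{l}\partial_x^{\alpha_1}u |_{2}\,
| w_q^{l}\partial_x^{\alpha_2}u |_{\nu}\,
| w_q^{l}\partial_x^{\alpha}\{{\bf I}-{\bf P}\}u |_{\nu},
$
where I have also used that on the right of \eqref{Gamma0} one factor can be taken in $|\cdot|_2$ rather than $|\cdot|_\nu$. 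In each term I place the factor with fewer $x$-derivatives in $L^\infty_x$ and the other two in $L^2_x$; since $|\alpha|\le N$ and $N\ge4$, at least one of $|\alpha_1|,|\alpha_2|$ is $\le N-2$, and $H^2_x\hookrightarrow L^\infty_x$ converts its $L^\infty_x L^2_\xi$ norm into the instant energy $\mathcal E_{q,l}(u)^{1/2}$. The remaining two $L^2_{x,\xi}$ factors are then estimated by $\mathcal E_{q,l}(u)^{1/2}$ and by $\mathcal D_{q,l}(u)^{1/2}$ respectively — here it matters that the third factor carries $\{{\bf I}-{\bf P}\}u$, whose weighted $\nu$-norm is literally part of $\mathcal D_{q,l}(u)$ (the last sum in \eqref{D}), and that the $\nu$-norm of the second (micro-or-macro) factor is controlled by its energy norm since $\nu$ is bounded on the macroscopic part; taking the product and applying Cauchy--Schwarz in the finite sum gives $\mathcal E_{q,l}(u)^{1/2}\mathcal D_{q,l}(u)^{1/2}\cdot\mathcal E_{q,l}(u)^{1/2}=\mathcal E_{q,l}(u)^{1/2}\mathcal D_{q,l}(u)$, as claimed. (A subtle point: one must check that when $\partial_x^\alpha u$ is replaced by its projection the macroscopic contribution is genuinely controlled; this is handled because $\Gamma(u,u)\perp\mathcal N$, so $(\partial_x^\alpha\Gamma(u,u),w_q^{2l}\partial_x^\alpha u) = (\partial_x^\alpha\Gamma(u,u),w_q^{2l}\partial_x^\alpha\{{\bf I}-{\bf P}\}u)$ up to weight commutators that are lower order, reducing \eqref{Gamma w1} to the same kind of estimate as \eqref{Gamma w0}.)

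For \eqref{Gamma w0} the argument is parallel but now with genuine $\xi$-derivatives: expand $\partial_\beta^\alpha\Gamma(u,u)$ by the Leibniz rule in both $x$ and $\xi$, apply \eqref{Gamma0} with $m=l-|\beta|$, and note that the $\xi$-derivatives distribute as $\beta_1+\beta_2\le\beta$ so that each factor $\langle\xi\rangle^{l-|\beta|}\partial_{\beta_i}u$ is dominated by $w_q^{l-|\beta_i|}\partial_{\beta_i}u$ (since $l-|\beta|\le l-|\beta_i|$ and the exponents of $w_q$ only increase as the number of $\xi$-derivatives decreases — this is precisely the design of the weight $w_q^{l-|\beta|}$, cf. Remark~1.2). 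Then the same $L^\infty_x$–$L^2_x$–$L^2_x$ splitting, using now $|\alpha|+|\beta|\le N$ with $N\ge4$ to route the low-order factor into $\mathcal E_{q,l}(u)^{1/2}$, finishes the estimate, the test-function factor $w_q^{l-|\beta|}\partial_\beta^\alpha\{{\bf I}-{\bf P}\}u$ again being directly an $\mathcal D_{q,l}(u)^{1/2}$-quantity. The main obstacle — and the only place requiring care rather than bookkeeping — is the bilinear $\nu$-weighted trilinear estimate \eqref{Gamma0} itself together with the verification that the chosen weight powers $w_q^{l-|\beta_i|}$ indeed dominate $\langle\xi\rangle^{l-|\beta|}$ uniformly for every admissible splitting $\beta_i\le\beta$; once that monotonicity of the weight in $|\beta|$ is in hand, the proof is a routine Hölder-in-$x$/Sobolev argument.
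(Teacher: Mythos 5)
The paper gives no proof of this lemma at all (it is stated and then used, with references to \cite{Guo03,Guo06,Duan_Yang_Zhao1,Duan_Yang_Zhao2} for the corresponding nonlinear estimates elsewhere), so there is no in-paper argument to compare against. Your overall strategy — bilinear/Leibniz expansion in $x$ and $\xi$, the trilinear estimate \eqref{Gamma0} with an appropriate weight exponent $m$, monotonicity of $w_q^{l-|\beta_i|}$ in $|\beta_i|$, Sobolev embedding $H^2_x\hookrightarrow L^\infty_x$ using $N\ge 4$, then distributing the three resulting factors into $\mathcal{E}_{q,l}^{1/2}$ and $\mathcal{D}_{q,l}^{1/2}$ — is the standard route and is surely what the authors intend. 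Two specific points, however, do not hold as written.

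First, the weight exponent: you set $m=l-|\beta|$ and assert that the gap between $\langle\xi\rangle^{l-|\beta|}$ and $w_q^{l-|\beta|}=\langle\xi\rangle^{(q-\gamma)(l-|\beta|)}$ is ``harmless'' because $\nu\sim\langle\xi\rangle^\gamma$. That absorption does not work: the discrepancy is $\langle\xi\rangle^{(q-\gamma-1)(l-|\beta|)}$, which for general $q\ge 1$ (and in particular for soft potentials where $q-\gamma>1$) is an unbounded weight that the single power $\nu^{1/2}\sim\langle\xi\rangle^{\gamma/2}$ in each factor cannot cancel. The correct move is simply to apply \eqref{Gamma0} with $m=(q-\gamma)(l-|\beta|)$, so that $\langle\xi\rangle^m=w_q^{l-|\beta|}$ on the nose; this is permitted since $q\ge 1$, $\gamma\le 1$, $l\ge|\beta|$ give $m\ge 0$, and it removes the need for any absorption argument. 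As written, your computation does not produce the weighted norms that actually appear in $\mathcal{E}_{q,l}$ and $\mathcal{D}_{q,l}$.

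Second, the macroscopic part of the test function in \eqref{Gamma w1}. You claim $(\partial_x^\alpha\Gamma(u,u),w_q^{2l}\partial_x^\alpha u)$ equals $(\partial_x^\alpha\Gamma(u,u),w_q^{2l}\partial_x^\alpha\{\mathbf{I}-\mathbf{P}\}u)$ ``up to weight commutators that are lower order.'' That is not an estimate. What $\Gamma\perp\mathcal{N}$ actually gives is $(\partial_x^\alpha\Gamma,w_q^{2l}\partial_x^\alpha\mathbf{P}u)=(\partial_x^\alpha\Gamma,\{\mathbf{I}-\mathbf{P}\}(w_q^{2l}\partial_x^\alpha\mathbf{P}u))$, where $\{\mathbf{I}-\mathbf{P}\}(w_q^{2l}\partial_x^\alpha\mathbf{P}u)$ decays exponentially in $\xi$ and has $L^2_x$ norm comparable to $\|\partial_x^\alpha(a,b,c)\|$. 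For $|\alpha|\ge 1$ this is part of $\mathcal{D}_{q,l}^{1/2}$ and your splitting goes through; for $\alpha=0$ (with nontrivial weight $l>0$) it is only $\mathcal{E}_{q,l}^{1/2}$, and the resulting bound is of size $\mathcal{E}_{q,l}\mathcal{D}_{q,l}^{1/2}$, not $\mathcal{E}_{q,l}^{1/2}\mathcal{D}_{q,l}$. This wrinkle is not a cosmetic commutator issue; it is precisely why the paper pairs the zeroth-order weighted equation against $w_q^{2l}\{\mathbf{I}-\mathbf{P}\}u$ (Step 1 of their Lyapunov lemma, via \eqref{Gamma w0}) and only invokes \eqref{Gamma w1} with $l>0$ for $1\le|\alpha|\le N$ (Step 2). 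You should either prove \eqref{Gamma w1} only for $|\alpha|\ge 1$, which is all the paper needs, or spell out the $\{\mathbf{I}-\mathbf{P}\}$-commutator bound explicitly and acknowledge where the macroscopic zeroth-order term lands.
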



Next, as the first step,
we shall obtain the dissipation rate
$$
{\epsilon}\sum_{|\alpha|\leq N}\|\{{\bf I}-{\bf P}_0\}\partial_x^{\alpha} u\|^2.
$$
To this end, we consider the non-weighted energy estimates on the solution $u$ of (\ref{u})-(\ref{u_0}).
Taking $\partial^{\alpha}_x$ of the equation (\ref{u}) yields
\begin{equation} \label{nonweight}
  \frac{1}{2}\frac{d}{dt}\|\partial_x^{\alpha}u\|^2-(L\partial_x^{\alpha}u,\partial_x^{\alpha}u)
  -{\epsilon}(L_{FP}\partial_x^{\alpha}u,\partial_x^{\alpha}u)
  =(\partial_x^{\alpha}\Gamma(u,u),\partial_x^{\alpha}u).
\end{equation}
Applying (\ref{L0}), (\ref{L_FP0}) and (\ref{Gamma w1}) with $l=0$ to (\ref{nonweight}),
we thus get the following lemma.
\begin{lemma}\label{pure-x}
  It holds that for each $t>0$,
  \begin{equation}\label{nonweight0}
    \begin{aligned}
      \frac{1}{2}\frac{d}{dt}\sum_{|\alpha|\leq N}\|\partial^{\alpha}_xu\|^2
      +\lambda_0\sum_{|\alpha|\leq N}\|\partial^{\alpha}_x\{{\bf {I}-\bf{P}}\}u\|^2_{\nu}&\\[2mm]
      +\lambda_{FP}{\epsilon}\sum_{|\alpha|\leq N}\|\{{\bf I}-{\bf P}_0\}\partial_x^{\alpha} u\|^2
      & \lesssim \mathcal{E}_{q,l}(u)^{1/2}(t)\mathcal{D}_{q,l}(u)(t).
    \end{aligned}
  \end{equation}
\end{lemma}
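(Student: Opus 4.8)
The plan is to obtain (\ref{nonweight0}) directly from the non-weighted identity (\ref{nonweight}) by feeding in the three estimates already collected, namely the Boltzmann coercivity (\ref{L0}), the Fokker--Planck coercivity (\ref{L_FP0}), and the nonlinear bound (\ref{Gamma w1}) with $l=0$, and then summing over $|\alpha|\leq N$. Recall that (\ref{nonweight}) itself is produced by applying $\partial_x^{\alpha}$ to (\ref{u}) and pairing with $\partial_x^{\alpha}u$ in $L^2_{x,\xi}$: the transport term $\xi\cdot\nabla_x$ vanishes after integration by parts in $x$ (since $\xi$ is merely a parameter there), and $L_{FP}$ is self-adjoint. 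The structural fact used throughout is that $\partial_x^{\alpha}$ commutes with every velocity operator here --- $L$, $L_{FP}$, ${\bf P}$, ${\bf P}_0$ --- because all of them have coefficients depending only on $\xi$; in particular $\{{\bf I}-{\bf P}\}\partial_x^{\alpha}u=\partial_x^{\alpha}\{{\bf I}-{\bf P}\}u$ and likewise for ${\bf P}_0$.

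For the linear collision term, (\ref{L0}) is stated for functions of $\xi$ alone, but since $L$ acts only on the velocity variable one may apply it at each fixed $(t,x)$ and integrate in $x$, obtaining
\begin{equation*}
  -(L\partial_x^{\alpha}u,\partial_x^{\alpha}u)\geq \lambda_0\|\partial_x^{\alpha}\{{\bf I}-{\bf P}\}u\|_{\nu}^2.
\end{equation*}
For the Fokker--Planck term, (\ref{L_FP0}) gives at once $-{\epsilon}(L_{FP}\partial_x^{\alpha}u,\partial_x^{\alpha}u)\geq \lambda_{FP}{\epsilon}\|\{{\bf I}-{\bf P}_0\}\partial_x^{\alpha}u\|^2$. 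For the nonlinear term, (\ref{Gamma w1}) with $l=0$ yields $(\partial_x^{\alpha}\Gamma(u,u),\partial_x^{\alpha}u)\lesssim \mathcal{E}_{q,0}(u)^{1/2}(t)\mathcal{D}_{q,0}(u)(t)$; since $q\geq 1\geq\gamma$ forces $w_q=\langle\xi\rangle^{q-\gamma}\geq 1$, one has $\mathcal{E}_{q,0}\leq\mathcal{E}_{q,l}$ and $\mathcal{D}_{q,0}\leq\mathcal{D}_{q,l}$, so the bound can be written with the full functionals $\mathcal{E}_{q,l}$, $\mathcal{D}_{q,l}$. Inserting these three estimates into (\ref{nonweight}) and summing over $|\alpha|\leq N$ gives (\ref{nonweight0}).

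There is no genuine obstacle in this lemma: it is the base non-weighted energy estimate, and it only captures the microscopic dissipation $\|\partial_x^{\alpha}\{{\bf I}-{\bf P}\}u\|_{\nu}^2$ with trivial velocity weight together with the $\epsilon$-gain $\|\{{\bf I}-{\bf P}_0\}\partial_x^{\alpha}u\|^2$. The real difficulty announced in the introduction --- the velocity growth $|\xi||\partial_{\beta}u|$ created when $\partial_{\beta}$ hits $L_{FP}$, and, for very soft potentials $-3<\gamma<-2$, the need to split the $\xi$-integral into $\{\langle\xi\rangle\leq R\}$ and $\{\langle\xi\rangle>R\}$ and use the coercivity of $L_{FP}$ on the outer region --- does not yet enter; it is deferred to the weighted estimates built on $\mathcal{E}_{q,l}$. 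The only bookkeeping point is that the macroscopic contributions lurking inside $\mathcal{D}_{q,l}$ on the right of (\ref{Gamma w1}) are closed off only when this lemma is combined with Theorem \ref{macro_thm} and the weighted estimates under the smallness of $\mathcal{E}_{q,l}(u_0)$.
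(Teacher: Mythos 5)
Your proposal is correct and follows exactly the paper's route: apply $\partial_x^{\alpha}$ to (\ref{u}), pair with $\partial_x^{\alpha}u$ to get (\ref{nonweight}), then feed in (\ref{L0}), (\ref{L_FP0}), and (\ref{Gamma w1}) with $l=0$, and sum over $|\alpha|\leq N$. The extra remarks you add --- commutation of $\partial_x^{\alpha}$ with the $\xi$-operators, and monotonicity of $\mathcal{E}_{q,l}$, $\mathcal{D}_{q,l}$ in $l$ via $w_q\geq 1$ --- are exactly the small points the paper leaves implicit, and they are correct.
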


For the second step, we consider the weighted energy estimates on $u$ to get the dissipation rate
$$\sum_{|\alpha|+|\beta|\leq N}\|w_{q}^{l-|\beta|}
                           \partial_{\beta}^{\alpha}\{{\bf I-P}\}u(t)\|_{\nu}^2.$$
\begin{lemma}
 There is a positive constant $\delta_0$ such that if
 \begin{equation}
   \label{assumption}
   \sup_{0\leq t\leq T}\mathcal{E}_{q,l}(u)(t)\leq\delta_0
 \end{equation}
 and $(q-\gamma)^2{\epsilon}\leq \delta_0$, then
 \begin{equation}
   \label{Lyapunov}
   \frac{d}{dt}\mathcal{E}_{q,l}(u)(t)+\lambda \mathcal{D}_{q,l}(u)(t)\leq 0.
 \end{equation}
\end{lemma}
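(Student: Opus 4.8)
The plan is to combine all the energy estimates derived so far into a single Lyapunov-type inequality by carefully choosing the constants in an appropriate linear combination. The building blocks are: the pure-$x$ estimate (\ref{nonweight0}) from Lemma~\ref{pure-x}, which produces the dissipation of $\{{\bf I}-{\bf P}\}\partial_x^{\alpha}u$ in $\nu$-norm and the Fokker-Planck dissipation $\epsilon\|\{{\bf I}-{\bf P}_0\}\partial_x^{\alpha}u\|^2$; the macroscopic estimate (\ref{macro}) from Theorem~\ref{macro_thm}, which recovers the dissipation of $\nabla_x(a,b,c)$ at the cost of a term $\epsilon^2\sum\|\partial_x^{\alpha}(b,c)\|^2$ plus microscopic $\nu$-dissipation; and a weighted energy estimate for $\partial_{\beta}^{\alpha}\{{\bf I}-{\bf P}\}u$ that still needs to be produced in this step. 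The weighted estimate is obtained by applying $w_q^{2(l-|\beta|)}\partial_{\beta}^{\alpha}$ to equation (\ref{u}), integrating against $\partial_{\beta}^{\alpha}u$, and using: the weighted coercivity (\ref{L1}) for $L$, the weighted Fokker-Planck estimate (\ref{Lfp w1}) (which is where the factor $(q-\gamma)^2\epsilon$ enters and must be absorbed using the smallness hypothesis $(q-\gamma)^2\epsilon\le\delta_0$), and the nonlinear estimate (\ref{Gamma w0}) for $\Gamma$. The $\xi$-transport term $\xi\cdot\nabla_x\partial_{\beta}^{\alpha}u$, when $\beta\ne 0$, generates lower-order terms $\partial_{\beta'}^{\alpha'}u$ with $|\alpha'|\le|\alpha|+1$, $|\beta'|<|\beta|$, which is exactly why the weight $w_q^{l-|\beta|}$ decreases in $|\beta|$ and why one does an induction on $|\beta|$ from $|\beta|=0$ upward; the velocity growth $|\xi||\partial_{\beta}u|$ coming from $\partial_{\beta}$ hitting $L_{FP}$ is controlled because $w_q^{l-|\beta|}$ carries an extra power $\langle\xi\rangle^{-\gamma}$ relative to the $\nu$-weight, as noted in the second remark.

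The order I would carry this out: first establish the weighted estimate for $\{{\bf I}-{\bf P}\}u$ by the induction on $|\beta|$ just described, arriving at an inequality of the form
\begin{equation*}
  \frac{d}{dt}\sum_{|\alpha|+|\beta|\le N}\|w_q^{l-|\beta|}\partial_{\beta}^{\alpha}u\|^2
  +\lambda\sum_{|\alpha|+|\beta|\le N}\|w_q^{l-|\beta|}\partial_{\beta}^{\alpha}\{{\bf I}-{\bf P}\}u\|_{\nu}^2
  \lesssim \sum_{|\alpha|\le N}\|\partial_x^{\alpha}\{{\bf I}-{\bf P}\}u\|_{\nu}^2
  +\mathcal{E}_{q,l}^{1/2}\mathcal{D}_{q,l}+(q-\gamma)^2\epsilon\,\mathcal{E}_{q,l},
\end{equation*}
where the macroscopic part $\|w_q^{l-|\beta|}\partial_{\beta}^{\alpha}{\bf P}u\|_{\nu}^2$ on the left is handled by noting ${\bf P}u$ is a fixed polynomial times $M^{1/2}$ so such norms are comparable to $\|\partial_x^{\alpha}(a,b,c)\|^2$, which we will dominate separately. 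The pure-$x$ Lemma~\ref{pure-x} already contributes the $\epsilon\sum\|\{{\bf I}-{\bf P}_0\}\partial_x^{\alpha}u\|^2$ dissipation and the $\|\partial_x^{\alpha}\{{\bf I}-{\bf P}\}u\|_{\nu}^2$ dissipation, so the right-hand side of the weighted estimate above is absorbable. Then I form
\begin{equation*}
  \mathcal{E}_{q,l}^{\,\mathrm{tot}}(t) = \mathcal{E}_{q,l}(u)(t) + C_1\!\!\sum_{|\alpha|\le N}\!\|\partial_x^{\alpha}u\|^2 + \kappa_0\,\mathcal{E}_{int}(u)(t),
\end{equation*}
choosing $C_1$ large and then $\kappa_0$ small: by (\ref{E_int}) the interactive functional is dominated by $\sum_{|\alpha|\le N}\|\partial_x^{\alpha}u\|^2$, so for $\kappa_0$ small $\mathcal{E}_{q,l}^{\,\mathrm{tot}}\sim\mathcal{E}_{q,l}$ and the logical content of (\ref{Lyapunov}) (with $\mathcal{E}_{q,l}$ replaced by the equivalent $\mathcal{E}_{q,l}^{\,\mathrm{tot}}$, hence $\mathcal{E}_{q,l}$ itself up to constants) is what we prove.

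Adding the three estimates with these weights, the large constant $C_1$ in front of the pure-$x$ estimate lets its $\nu$-dissipation of $\partial_x^{\alpha}\{{\bf I}-{\bf P}\}u$ absorb the same term appearing on the right of both the weighted estimate and the macroscopic estimate (\ref{macro}); the $\kappa_0$-weighted macroscopic estimate then supplies the missing $\sum_{1\le|\alpha|\le N}\|\partial_x^{\alpha}{\bf P}u\|^2\sim\sum_{|\alpha|\le N-1}\|\partial_x^{\alpha}\nabla_x(a,b,c)\|^2$ dissipation, whose leftover $\epsilon^2\sum\|\partial_x^{\alpha}(b,c)\|^2$ is controlled by $\epsilon\sum\|\{{\bf I}-{\bf P}_0\}\partial_x^{\alpha}u\|^2$ once $\epsilon\le\delta_0$ is small (since $\|(b,c)\|\lesssim\|\{{\bf I}-{\bf P}_0\}u\|$ and $\epsilon^2\le\epsilon\delta_0$). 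The remaining bad term $(q-\gamma)^2\epsilon\,\mathcal{E}_{q,l}$ is absorbed into the weighted $\nu$-dissipation on the left using $(q-\gamma)^2\epsilon\le\delta_0$ together with the fact that $\mathcal{E}_{q,l}\lesssim\mathcal{D}_{q,l}+\sum_{|\alpha|\le N}\|\partial_x^{\alpha}{\bf P}u\|^2$ up to the zeroth-order macroscopic mode $\|a\|$, which is itself controlled (after also invoking $\epsilon$-smallness on the balance laws, or simply keeping $\|\partial_x^{\alpha}u\|^2$ inside the time-differentiated functional and not demanding decay of that mode here). Finally the cubic term $\mathcal{E}_{q,l}^{1/2}\mathcal{D}_{q,l}$ is absorbed by the assembled dissipation $\lambda\mathcal{D}_{q,l}$ using the a priori smallness (\ref{assumption}): $\mathcal{E}_{q,l}^{1/2}\le\delta_0^{1/2}$. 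Choosing $\delta_0$ small enough that all these absorptions go through yields $\frac{d}{dt}\mathcal{E}_{q,l}^{\,\mathrm{tot}}+\lambda\mathcal{D}_{q,l}\le 0$, which is (\ref{Lyapunov}). The main obstacle is the bookkeeping in the weighted estimate for $\{{\bf I}-{\bf P}\}u$: correctly tracking, through the induction on $|\beta|$, how the transport-generated lower-order terms and the Fokker-Planck velocity-growth terms get absorbed by the decreasing weight $w_q^{l-|\beta|}$ while keeping the coefficient of the dangerous $(q-\gamma)^2\epsilon$ contribution under control — everything else is a constant-juggling argument of a by-now standard type.
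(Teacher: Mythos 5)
Your strategy — combine the pure-$x$ estimate (\ref{nonweight0}), the macroscopic estimate (\ref{macro}), and a family of weighted estimates, then juggle constants so the cross terms, the cubic terms, and the $(q-\gamma)^2\epsilon$-terms are absorbed — is exactly the paper's; your $\mathcal{E}^{\mathrm{tot}}$, $C_1$, $\kappa_0$ correspond to the paper's combined functional and the constants $M_1$, $M_2$ in its Step~4.

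However, the way you formulate the weighted estimate has a gap at order zero. You propose to apply $w_q^{2(l-|\beta|)}\partial_\beta^\alpha$ to (\ref{u}) and test against $\partial_\beta^\alpha u$ for all $|\alpha|+|\beta|\le N$, and your displayed inequality accordingly has $\frac{d}{dt}\mathcal{E}_{q,l}(u)$ on the left and a residual $(q-\gamma)^2\epsilon\,\mathcal{E}_{q,l}(u)$ on the right. At $\alpha=\beta=0$ this does not go through. The weighted coercivity (\ref{L1}) with $g=u$, $\beta=0$ gives
\begin{equation*}
  -\bigl(w_q^{2l}Lu,\,u\bigr)\;\ge\;\bigl(\tfrac12-\eta\bigr)\left\|w_q^{l}u\right\|_\nu^2-C_\eta\|u\|_\nu^2,
\end{equation*}
and $C_\eta\|u\|_\nu^2\gtrsim\|a\|^2$ is \emph{not} part of $\mathcal{D}_{q,l}$ as defined in (\ref{D}): the macroscopic dissipation there starts at one $x$-derivative, and $\|\{{\bf I}-{\bf P}_0\}u\|^2$ controls $b,c$ but not $a$. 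Since $C_\eta$ is large and $w_q^l\ge1$, this term cannot be reabsorbed into the weighted dissipation either. The same pathology afflicts your residual $(q-\gamma)^2\epsilon\,\mathcal{E}_{q,l}\gtrsim(q-\gamma)^2\epsilon\|a\|^2$ coming from (\ref{Lfp w1}) applied to the full $u$; it cannot be absorbed ``up to the $\|a\|$ mode'' because that mode is precisely the obstruction. The paper circumvents this by a three-way split: at $\alpha=\beta=0$ it works with the projected equation (\ref{(I-P)u}) so (\ref{L1}) and (\ref{Lfp w1}) are applied to $\{{\bf I}-{\bf P}\}u$, and the leftover $C_\eta\|\{{\bf I}-{\bf P}\}u\|_\nu^2$ sits inside $\mathcal{D}_{q,l}$; at $1\le|\alpha|\le N$, $\beta=0$ it works with the full $u$, but then $\|\partial_x^\alpha u\|_\nu^2\lesssim\|\partial_x^\alpha(a,b,c)\|^2+\|\partial_x^\alpha\{{\bf I}-{\bf P}\}u\|_\nu^2$ again sits in $\mathcal{D}_{q,l}$; and at $|\beta|\ge1$ it again works with $\{{\bf I}-{\bf P}\}u$. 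Your remark about ``keeping $\|\partial_x^\alpha u\|^2$ inside the time-differentiated functional'' handles the energy side of the $a$-mode, not the bad source term that your formulation creates. Switching the $\alpha=\beta=0$ and $|\beta|\ge1$ weighted estimates to the projected equation removes the offending terms, after which your constant-juggling goes through exactly as you describe.
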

\begin{proof}
  \textbf{Step 1.} Weight estimate on zero-order of $\{{\bf {I}}-{\bf{P}}\}u$:
  \begin{equation}\label{0-order}
  \begin{aligned}
    &\frac{d}{dt}\|w_q^l(\xi)\{{\bf{I}}-{\bf{P}}\}u(t)\|^2
    +\frac{1}{2}\|w_{q}^{l}\{{\bf{I}}-{\bf{P}}\}u\|_{\nu}^2\\[2mm]
    &+{\epsilon}\lambda_{FP}\|\{{\bf{I}}-{\bf{P}}_0\}(w_q^l\{{\bf{I}}-{\bf{P}}\}u)(t)\|^2\\[2mm]
    \lesssim &\|\{{\bf{I}}-{\bf{P}}\}u\|_{\nu}^2+\|\nabla_xu\|_{\nu}^2
    +\mathcal{E}_{q,l}(u)^{1/2}(t)\mathcal{D}_{q,l}(u)(t).
  \end{aligned}
  \end{equation}
  In fact, apply $\{{\bf {I}}-{\bf{P}}\}$ to (\ref{u}) and then use
  $$ L_{FP}{\bf{P}}u={\bf{P}}L_{FP}u$$
  to find
  \begin{equation}\label{(I-P)u}
  \begin{aligned}
    &\partial_t\{{\bf {I}}-{\bf{P}}\}u+\xi\cdot\nabla_x\{{\bf {I}}-{\bf{P}}\}u-L\{{\bf {I}}-{\bf{P}}\}u\\[2mm]
    =&\Gamma(u,u)+{\epsilon}L_{FP}\{{\bf{I}}-{\bf{P}}\}u+{\bf{P}}\xi\cdot\nabla_xu-
    \xi\cdot\nabla_x{\bf{P}}u.
    \end{aligned}
  \end{equation}
  Multiply (\ref{(I-P)u}) by $w_{q}^{2l}\{{\bf{I}}-{\bf{P}}\}u$ and integrate it over $\mathbb{R}^3\times\mathbb{R}^3$
  to have
  \begin{equation} \label{0-order1}
  \begin{aligned}
    &\frac{1}{2}\frac{d}{dt}\|w_{q}^{l} \{{\bf{I}}-{\bf{P}}\}u\|^2-(w_{q}^{2l}L\{{\bf{I}}-{\bf{P}}\}u,\{{\bf{I}}-{\bf{P}}\}u)\\[2mm]
    =&(w_{q}^{2l}\Gamma(u,u),\{{\bf{I}}-{\bf{P}}\}u)+{\epsilon}(L_{FP}\{{\bf{I}}-{\bf{P}}\}u,w_{q}^{2l}\{{\bf{I}}-{\bf{P}}\}u)\\[2mm]
    &+({\bf P}\xi\cdot\nabla_xu-\xi\cdot\nabla_x{\bf P}u,w_{q}^{2l}\{{\bf{I}}-{\bf{P}}\}u).
  \end{aligned}
  \end{equation}
  Cauchy's inequality yields that
  the third term on the right-hand side of (\ref{0-order1}) is bounded by
  $$
  \frac{1}{8}\|w_{q}^{l}\{{\bf{I}}-{\bf{P}}\}u\|_{\nu}^2
  +C\|\nabla_xu\|_{\nu}^2.
  $$
  Plugging (\ref{L1}), (\ref{Gamma w0}) and (\ref{Lfp w1}) into (\ref{0-order1}),
  we can prove  (\ref{0-order}) when $(q-\gamma)^2{\epsilon}$ is suitably small.
  \\
  {\bf Step 2.} Weighted estimate on pure space-derivative of $u$:
  \begin{equation}
    \label{x weight}
    \begin{aligned}
      &\frac{d}{dt}\sum_{1\leq |\alpha|\leq N}\|w_{q}^{l}\partial_x^{\alpha}u\|^2
      +\frac{1}{2}\sum_{1\leq |\alpha|\leq N}\|w_{q}^{l}\partial_x^{\alpha}u\|_{\nu}^2\\[2mm]
      &+\lambda_{FP}{\epsilon}\sum_{1\leq |\alpha|\leq N}
      \|\{{\bf I}-{\bf P}_0\}(w_{q}^{l}\partial_x^{\alpha} u)\|^2\\[2mm]
      \lesssim &
      \sum_{1\leq |\alpha|\leq N}\|\partial_x^{\alpha}u\|_{\nu}^2+\mathcal{E}_{q,l}(u)^{1/2}(t)\mathcal{D}_{q,l}(u)(t).
    \end{aligned}
  \end{equation}
  In fact, let $1\leq|\alpha|\leq N$. Taking $\partial_x^{\alpha}$ of (\ref{u}), multiplying it
  by $w_{q}^{2l}(\xi)\partial_x^{\alpha}u$, and then integrating it over $\mathbb{R}^3\times\mathbb{R}^3$,
  one has
  \begin{equation} \label{x weight1}
    \begin{aligned}
      &\frac{1}{2}\frac{d}{dt}\|w_{q}^{l}\partial_{x}^{\alpha}u\|^2
      -(w_{q}^{2l}L\partial_{x}^{\alpha}u,\partial_{x}^{\alpha}u)\\[2mm]
     =&(\partial_{x}^{\alpha}\Gamma(u,u),w_{q}^{2l}\partial_{x}^{\alpha}u)
      +{\epsilon}(L_{FP}\partial_{x}^{\alpha}u,w_{q}^{2l}\partial_{x}^{\alpha}u).
    \end{aligned}
  \end{equation}
  Hence, (\ref{x weight}) follows from plugging the estimates
  (\ref{L1}), (\ref{Gamma w1}) and (\ref{Lfp w1}) into (\ref{x weight1}) and then taking
  summation over $1\leq |\alpha|\leq N$.

  {\bf Step 3.} Weighted estimate on mixed space-velocity-derivative of $u$:
  \begin{equation}
    \label{mixed}
    \begin{aligned}
      &\frac{d}{dt}\sum_{m=1}^{N}C_m\sum_{\substack{|\beta|=m\\|\alpha|+|\beta|\leq N}}
      \|w_{q}^{l-|\beta|}\partial_{\beta}^{\alpha}\{{\bf{I}}-{\bf{P}}\}u\|^2\\[2mm]
      &+\lambda\sum_{\substack{|\beta|\geq 1\\ |\alpha|+|\beta|\leq N}}
      \left\{\|w_{q}^{l-|\beta|}\partial_{\beta}^{\alpha}\{{\bf{I}}-{\bf{P}}\}u\|_{\nu}^2
      +{\epsilon}\|\{{\bf I}-{\bf P}_0\}(w_{q}^{l-|\beta|}\partial_{\beta}^{\alpha}
      \{{\bf{I}}-{\bf{P}}\}u)\|^2\right\}\\[2mm]
      \lesssim &
      \sum_{1\leq |\alpha|\leq N}\|\partial_x^{\alpha}u\|_{\nu}^2+\mathcal{E}_{q,l}(u)^{1/2}(t)\mathcal{D}_{q,l}(u)(t).
    \end{aligned}
  \end{equation}
  Indeed, let $|\beta|=m>0$ and $|\alpha|+|\beta|\leq N$.
  For notational simplicity, we denote that $u_2\equiv \{{\bf{I}}-{\bf{P}}\}u$.
  Apply $\partial_{\beta}^{\alpha}$ to (\ref{(I-P)u}),
  and multiply it by $w_q^{2(l-|\beta|)}\partial_{\beta}^{\alpha}u_2$
  and then integrate over $\mathbb{R}^3\times\mathbb{R}^3$ to find
  \begin{equation}
    \label{mixed1}
    \begin{aligned}
      &\frac{1}{2}\frac{d}{dt}\left\|w_{q}^{l-|\beta|}\partial_{\beta}^{\alpha}u_2\right\|^2
      -\left(w_q^{2(l-|\beta|)}\partial_{\beta}^{\alpha}Lu_2,\partial_{\beta}^{\alpha}u_2\right)\\[2mm]
     =&\left(\partial_{\beta}^{\alpha}\Gamma(u,u),w_q^{2(l-|\beta|)}\partial_{\beta}^{\alpha}u_2\right)
      +{\epsilon}\left(\partial_{\beta}^{\alpha}L_{FP}u_2,
      w_q^{2(l-|\beta|)}\partial_{\beta}^{\alpha}u_2\right)\\[2mm]
      &-\left(\partial_{\beta}^{\alpha}(\xi\cdot\nabla_xu_2),
      w_q^{2(l-|\beta|)}\partial_{\beta}^{\alpha}u_2\right)\\[2mm]
      &+\left(\partial_{\beta}^{\alpha}({\bf P}\xi\cdot\nabla_x u-\xi\cdot\nabla_x{\bf P}u),
      w_q^{2(l-|\beta|)}\partial_{\beta}^{\alpha}u_2\right).
    \end{aligned}
  \end{equation}
  Noting that $w_{q}^{l-|\beta|}\leq w_q^{l-|\beta_1|}$ whenever $|\beta_1|\leq |\beta|$,
  we obtain from (\ref{L1}) that
  \begin{equation}
    \label{mixed2}
    \begin{aligned}
      -\left(w_q^{2(l-|\beta|)}\partial_{\beta}^{\alpha}Lu_2,\partial_{\beta}^{\alpha}u_2 \right)
   \geq ~\frac{1}{2}\left\|w_{q}^{l-|\beta|}\partial_{\beta}^{\alpha}u_2\right\|_{\nu}^2
          -C_{\eta}\|\partial_{x}^{\alpha}u_2\|_{\nu}^2
          -\eta\sum_{|\beta_1|\leq |\beta|}
          \left\|w_q^{l-|\beta_1|}\partial_{\beta_1}^{\alpha}u_2\right\|_{\nu}^2.
    \end{aligned}
  \end{equation}
  We estimate the terms on the right hand side of (\ref{mixed1}).
  Recall that $w=\langle \xi \rangle^{q-\gamma},$ which implies that
  $$\langle \xi \rangle\lesssim \nu(\xi)w^{-1}(\xi),$$
  whenever $q\geq1$.
  Hence we have
  \begin{equation} \label{mixed3}
  \begin{aligned}
        &(\partial_{\beta}^{\alpha}L_{FP}u_2,w_q^{2(l-|\beta|)}\partial_{\beta}^{\alpha}u_2)
         -(L_{FP}\partial^{\alpha}_{\beta}u_2,w_q^{2(l-|\beta|)}\partial_{\beta}^{\alpha}u_2)\\[2mm]
   =    &-\frac{1}{4}\sum_{0<\beta_1\leq \beta}C_{\beta}^{\beta_1}
         (\partial_{\beta_1}|\xi|^2\partial^{\alpha}_{\beta-\beta_1}u_2,
         w_q^{2(l-|\beta|)}\partial_{\beta}^{\alpha}u_2)\\[2mm]
   \leq &C \sum_{0<\beta_1\leq \beta}
         (\langle \xi \rangle|\partial^{\alpha}_{\beta-\beta_1}u_2|,w_q^{2(l-|\beta|)}|\partial_{\beta}^{\alpha}u_2|)\\[2mm]
   \leq &C \sum_{0<\beta_1\leq \beta}
         (\nu(\xi)w^{l-|\beta-\beta_1|}|\partial^{\alpha}_{\beta-\beta_1}u_2|,w_{q}^{l-|\beta|}
         |\partial_{\beta}^{\alpha}u_2|)\\[2mm]
   \leq &\eta \left\|w_{q}^{l-|\beta|}\partial^{\alpha}_{\beta}u_2\right\|_{\nu}^2
         +C_{\eta}\sum_{|\beta_1|<m}
          \left\|w_q^{l-|\beta_1|}\partial^{\alpha}_{\beta_1}u_2\right\|_{\nu}^2.
  \end{aligned}
  \end{equation}
  For the third term on the right hand side of (\ref{mixed1}),
  \begin{equation} \label{mixed4}
    \begin{aligned}
          &(\partial_{\beta}^{\alpha}(\xi\cdot\nabla_xu_2),w_q^{2(l-|\beta|)}\partial_{\beta}^{\alpha}u_2)\\[2mm]
      =   &(\partial_{\beta}^{\alpha}(\xi\cdot\nabla_xu_2),w_q^{2(l-|\beta|)}\partial_{\beta}^{\alpha}u_2)
           -(\xi\cdot\nabla_x\partial_{\beta}^{\alpha}u_2,w_q^{2(l-|\beta|)}\partial_{\beta}^{\alpha}u_2)\\[2mm]
      =   &\sum_{|\beta_1|=1}C_{\beta}^{\beta_1}
           (\partial_{\beta-\beta_1}^{\alpha+\beta_1}u_2,w_q^{2(l-|\beta|)}\partial_{\beta}^{\alpha}u_2)\\[2mm]
      \leq&\eta \left\|w_{q}^{l-|\beta|}\partial^{\alpha}_{\beta}u_2\right\|_{\nu}^2
         +C_{\eta}\sum_{\substack{|\beta_1|<m\\|\alpha_1|+|\beta_1|\leq N}}
          \left\|w_q^{l-|\beta_1|}\partial^{\alpha_1}_{\beta_1}u_2\right\|_{\nu}^2.
    \end{aligned}
  \end{equation}
  The last term on the right hand side of (\ref{mixed}) is bounded by
  \begin{equation} \label{mixed5}
    \begin{aligned}
      &\left(\partial_{\beta}^{\alpha}({\bf P}\xi\cdot\nabla_x u-\xi\cdot\nabla_x{\bf P}u),
      w_q^{2(l-|\beta|)}\partial_{\beta}^{\alpha}u_2\right)\\[2mm]
      \leq & \eta\left\|w_q^{l-|\beta|}\partial_{\beta}^{\alpha}u_2\right\|^2_{\nu}
      +C_{\eta}\sum_{1\leq |\alpha|\leq N}\|\partial_x^{\alpha}u\|_{\nu}^2.
    \end{aligned}
  \end{equation}
  Therefore, by choosing a small constant $\eta>0$, (\ref{mixed}) follows by
  plugging the estimates (\ref{mixed2}), (\ref{Gamma w1}), (\ref{mixed3}), (\ref{Lfp w1}), (\ref{mixed4})
  and (\ref{mixed5})
  into $(\ref{mixed1})$, taking summation over $\{|\beta|=m,|\alpha|+|\beta|\leq N\}$
  for each given $1\leq m \leq N$ and taking proper linear combination
  of those $N-1$ estimates with properly chosen constants $C_m>0 (1\leq m\leq N)$.

  \textbf{Step 4.} \emph{Combination.}
  First, let us multiply (\ref{nonweight0}) by a constant $M_1>0$ and sum it with (\ref{macro}).
  Note that  it holds that (\ref{E_int}) and
  $$\sum_{|\alpha|\leq N}\|\partial^{\alpha}_x(b,c)\|^2\leq\sum_{|\alpha|\leq N} \|\{{\bf I}-{\bf P}_0\}\partial_x^{\alpha} u\|^2.$$
   Thus, one can take $M_1 > 0$ such that the terms on the right-hand side of (\ref{macro}) can be absorbed
  and
  $$\mathcal{E}_{int}(u)(t)+\frac{1}{2}M_1\sum_{|\alpha|\leq N}\|\partial^{\alpha}_xu\|^2
  \sim \sum_{|\alpha|\leq N}\|\partial^{\alpha}_xu\|^2.$$
  In the further linear combination
  $$
  (\ref{0-order})+(\ref{x weight})+(\ref{mixed})+M_2\times[M_1\times(\ref{nonweight0})+(\ref{macro})],
  $$
  one can take $M_2 > 0$ large enough to absorb all the dissipation terms on the right-hand sides of
  (\ref{0-order}), (\ref{x weight}) and (\ref{mixed}), which implies
  \begin{equation}
   \label{weightE}
   \frac{d}{dt}\mathcal{E}_{q,l}(u)(t)+\lambda \mathcal{D}_{q,l}(u)(t)
   \lesssim \left[\mathcal{E}_{q,l}(u)^{1/2}(t)+\mathcal{E}_{q,l}(u)(t)\right]\mathcal{D}_{q,l}(u)(t).
   \end{equation}
  Therefore, (\ref{Lyapunov}) follows under the a priori assumption (\ref{assumption}).
  \end{proof}

  {\bf Proof of Theorem \ref{theorem1}(i) and Theorem \ref{theorem2}(i):}
  Fix $N$, $l$ as stated in Theorem 1.1 or Theorem 1.2.
  The local existence and uniqueness of the solution $u(t,x,\xi)$
  to the Cauchy problem $(\ref{u})-(\ref{u_0})$ can be proved in terms of the energy functional $\mathcal{E}_{q,l}(u)(t)$
  given by $(\ref{E})$, and the details are omitted for simplicity, see $\cite{Guo02, Guo03,Li_Matsumura}$ with a little modification.
  Now we have obtained the unform-in-time estimate $(\ref{Lyapunov})$ over $0\leq t\leq T$ with $0<T\leq \infty$.
  By the standard continuity argument, the global existence follows provided the initial energy functional
  $\mathcal{E}(u_0)$ is sufficiently small.

\section{Time Decay}
\subsection{The hard potential case}

In this subsection, we devote ourselves to obtaining the time decay rate of the global solution $u$ to the
Fokker-Planck-Boltzmann equation $(\ref{u})$-$(\ref{u_0})$ in the hard potential case ($0\leq \gamma\leq1$).
For this purpose, we first deduce some estimates for the Cauchy problem:
\begin{equation}\label{linear-equation}
  \begin{cases}
  \partial_tu+\xi\cdot\nabla_xu=Lu+{\epsilon}L_{FP}u+G,\\[2mm]
  u(0,x,\xi)=u_0(x,\xi),
\end{cases}
\end{equation}
where $u_0(x,\xi)$ and $G=G(t,x,\xi)$ with ${\bf P}G=0$ are given.
Formally, the solution $u$ to the Cauchy problem
$(\ref{linear-equation})$ can be written as the mild form
\begin{equation*}
  u(t)={e^{tB}}u_0+\int_0^te^{(t-s)B}h(s)ds,
\end{equation*}
where ${e^{tB}}$ denotes the solution operator to the Cauchy problem
of (\ref{linear-equation}) with $G\equiv 0$.
We first show that the operator ${e^{tB}}$
has the proposed algebraic decay properties as time tends to infinity.
The idea of the proofs is to make energy estimates for pointwise time $t$ and
frequency variable $k$, which corresponds to the spatial variable $x$.

\begin{lemma}\label{macro-thm}
There is $M>0$ such that
the free energy functional $\mathcal{E}_{free}(\widehat{u})(t,k)$, defined by
\begin{equation}\label{E_free}
  \begin{aligned}
    \mathcal{E}_{free}(\widehat{u})(t,k)=
    &M\sum_{j}\Bigg(\frac{1}{2}\sum_{m\neq j}\frac{ik_j}{1+|k|^2}A_{mm}(\{{\bf{I}}-{\bf{P}}\}\widehat{u})
    -\sum_{m}\frac{ik_m}{1+|k|^2}A_{jm}(\{{\bf{I}}-{\bf{P}}\}\widehat{u})|-\widehat{b}_j\Bigg)\\[2mm]
    &+M\sum_{j}\left(B_j\{{\bf{I}}-{\bf{P}}\}\widehat{u}|\frac{ik_j\widehat{a}}{1+|k|^2}\right)
    +\sum_{j}\left(\widehat{b}_j|\frac{ik_j}{1+|k|^2}\widehat{a}\right)
  \end{aligned}
\end{equation}
satisfies
\begin{equation} \label{Efree1}
  {\rm Re} \mathcal{E}_{free}(\widehat{u})(t,k)\lesssim |\widehat{u}|_2^2
\end{equation}
and
\begin{equation}\label{macro-Fourier}
  \begin{aligned}
    &\partial_t {\rm Re} \mathcal{E}_{free}(\widehat{u}(t,k))
    +\frac{\lambda|k|^2}{1+|k|^2}\left(|\widehat{a}|^2+|\widehat{b}|^2+|\widehat{c}|^2\right)\\[2mm]
    \lesssim ~~&{\epsilon}^2\left(|\widehat{b}|^2+|\widehat{c}|^2\right)+|\{{\bf{I}}-{\bf{P}}\}\widehat{u}|_{\nu}^2
    +|\nu^{-1/2}\widehat{G}|_2^2
  \end{aligned}
\end{equation}
for any $t\geq0$ and $k\in\mathbb{R}^3$.
\end{lemma}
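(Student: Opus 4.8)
The plan is to reproduce, pointwise in the frequency variable $k$, the macroscopic dissipation estimate of Theorem \ref{macro_thm}, with the single modification that every interactive term carries the weight $\frac{1}{1+|k|^2}$; this is exactly what makes the estimate uniform in $k$ and yields the dissipation factor $\frac{|k|^2}{1+|k|^2}$ in (\ref{macro-Fourier}).

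First I would take the Fourier transform in $x$ of (\ref{linear-equation}), which turns $\nabla_x$ into $ik$ (the $2\pi$ normalization being absorbed into constants), and then run the macro--micro decomposition $\widehat u={\bf P}\widehat u+\{{\bf I}-{\bf P}\}\widehat u$ exactly as in Section 2. Testing against the collision invariants $1,\xi,|\xi|^2$ gives the Fourier analogue of (\ref{macro1}),
\begin{align*}
  &\partial_t\widehat a+ik\cdot\widehat b=0,\qquad
   \partial_t\widehat b+ik(\widehat a+2\widehat c)+ik\cdot A(\{{\bf I}-{\bf P}\}\widehat u)+{\epsilon}\widehat b=0,\\
  &\partial_t\widehat c+\tfrac13 ik\cdot\widehat b+\tfrac53 ik\cdot B(\{{\bf I}-{\bf P}\}\widehat u)+2{\epsilon}\widehat c=0,
\end{align*}
while applying $A_{jm}(\cdot)$ and $B_j(\cdot)$ to the equation for ${\bf P}\widehat u$ gives the Fourier analogue of (\ref{macro2}) with source $\widehat R+\widehat G$, where $\widehat R$ is the Fourier transform of $R$ in (\ref{R,G}) and $\widehat G$ is the given source, still satisfying ${\bf P}\widehat G=0$. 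Throughout one uses the pointwise-in-$k$ versions of Lemma \ref{lemma1}: $|A_{jm}(\{{\bf I}-{\bf P}\}\widehat u)|+|B_j(\{{\bf I}-{\bf P}\}\widehat u)|\lesssim|\{{\bf I}-{\bf P}\}\widehat u|_\nu$, the extra $x$-derivative in (\ref{lemma1 2}) now becoming a harmless factor $1+|k|$, and $|A_{jm}(\widehat G)|+|B_j(\widehat G)|\lesssim|\nu^{-1/2}\widehat G|_2$ by Cauchy--Schwarz together with the Gaussian decay of the test functions $(\xi_j\xi_m-1)M^{1/2}$ and $(|\xi|^2-5)\xi_jM^{1/2}$.

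Then I would carry out the four steps of the proof of Theorem \ref{macro_thm} in Fourier space, each recovering one block of the dissipation in (\ref{macro-Fourier}) from the corresponding interactive term in (\ref{E_free}). For $\widehat b$: from the Fourier form of (\ref{macro2}) derive the analogue of the elliptic identity (\ref{elliptic}), whose left side is a quadratic form in $k$ dominating $|k|^2|\widehat b|^2$; pairing it with $\tfrac{\overline{\widehat b}}{1+|k|^2}$, taking real parts, replacing $\partial_t\widehat b$ via the $\widehat b$-equation, and invoking the moment bounds above produces, for any $\eta>0$, a differential inequality for the real part of the first term in (\ref{E_free}) with dissipation $\tfrac{\lambda|k|^2}{1+|k|^2}|\widehat b|^2$ and errors $\eta\tfrac{|k|^2}{1+|k|^2}(|\widehat a|^2+|\widehat c|^2)$, ${\epsilon}^2|\widehat b|^2$, $|\{{\bf I}-{\bf P}\}\widehat u|_\nu^2$ and $|\nu^{-1/2}\widehat G|_2^2$; the elementary bound $\tfrac{|k|}{1+|k|^2}\le\tfrac12$ keeps every constant independent of $k$. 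For $\widehat c$: using the $B_j$-interactive term in (\ref{E_free}), the Fourier form of (\ref{macro2})$_3$, and (\ref{macro1})$_3$ to eliminate $\partial_t\widehat c$, extract $\tfrac{\lambda|k|^2}{1+|k|^2}|\widehat c|^2$ modulo $\eta\tfrac{|k|^2}{1+|k|^2}|\widehat b|^2$, ${\epsilon}^2|\widehat c|^2$ and the same micro/source errors. For $\widehat a$: using the last interactive term in (\ref{E_free}), the Fourier form of (\ref{macro1})$_2$, and mass conservation $\partial_t\widehat a=-ik\cdot\widehat b$ for the time derivative of $\widehat a$, extract $\tfrac{\lambda|k|^2}{1+|k|^2}|\widehat a|^2$ modulo $\tfrac{|k|^2}{1+|k|^2}(|\widehat b|^2+|\widehat c|^2)$, ${\epsilon}^2|\widehat b|^2$ and $|\{{\bf I}-{\bf P}\}\widehat u|_\nu^2$. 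Finally, form $M$ times the sum of the $\widehat b$- and $\widehat c$-estimates plus the $\widehat a$-estimate: choose $M$ large so that the $\tfrac{|k|^2}{1+|k|^2}(|\widehat b|^2+|\widehat c|^2)$ arising in the $\widehat a$-step is absorbed by the $\widehat b,\widehat c$ dissipations, then $\eta$ small to close, which gives (\ref{macro-Fourier}) with exactly the functional (\ref{E_free}). The bound (\ref{Efree1}) is then immediate from Cauchy's inequality, $\tfrac{|k|}{1+|k|^2}\le\tfrac12$, and $|A_{jm}(\{{\bf I}-{\bf P}\}\widehat u)|+|B_j(\{{\bf I}-{\bf P}\}\widehat u)|\lesssim|\widehat u|_2$.

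The main obstacle I anticipate is the bookkeeping required to keep all constants uniform in $k$: one must check that every error term genuinely carries the factor $\tfrac{|k|^2}{1+|k|^2}$, hence is $O(|k|^2)$ as $k\to0$ and bounded as $|k|\to\infty$, since the left-hand dissipation in (\ref{macro-Fourier}) controls only these frequency-weighted quantities and no unweighted $|\widehat u|_2^2$; concretely, each $i\xi\cdot k$ transport contribution and each $ik$ coming from $A,B$ must be matched with a power of $\tfrac{1}{1+|k|^2}$ from the weight. The ${\epsilon}$-contributions, by contrast, are harmless: they appear with ${\epsilon}^2$ and only $|\widehat b|^2+|\widehat c|^2$, reproducing the right-hand side of (\ref{macro-Fourier}) verbatim, and will be absorbed at the later stage using the smallness of ${\epsilon}$ together with the extra dissipation ${\epsilon}\|\{{\bf I}-{\bf P}_0\}u\|^2$.
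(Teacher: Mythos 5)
Your plan is correct and follows essentially the same route as the paper's proof: Fourier-transform the macroscopic balance laws and the elliptic-type identity for $\widehat{b}$, pair with the same interactive dual variables, eliminate time derivatives via the $\widehat{a},\widehat{b},\widehat{c}$ equations, and close with a large $M$ and small $\eta$. The only stylistic difference is that you carry the $\tfrac{1}{1+|k|^2}$ weight through each pairing, whereas the paper derives the three pointwise inequalities with bare $|k|^2$ dissipation and $(1+|k|^2)$-weighted errors and then divides by $1+|k|^2$ in the final linear combination — these are algebraically identical.
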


\begin{proof}
\textit{Estimate on $\widehat{b}$. }
We claim that for $0<\eta<1$, it holds that
\begin{equation}\label{b-F-E}
  \begin{aligned}
    &\partial_t{\rm Re}\sum_{j}\left(\frac{1}{2}\sum_{m\neq j}ik_jA_{mm}(\{{\bf{I}}-{\bf{P}}\}\widehat{u})-
    \sum_{m}ik_mA_{jm}(\{{\bf{I}}-{\bf{P}}\}\widehat{u})|\widehat{b}_j\right)\\[2mm]
    &+(1-\eta)|k|^2|\widehat{b}|^2\\[2mm]
    \leq &\eta|k|^2\left(|\widehat{a}|^2+|\widehat{c}|^2\right)+{\epsilon}^2|\widehat{b}_j|^2
    +C_{\eta}(1+|k|^2)\left(|\{{\bf{I}}-{\bf{P}}\}\widehat{u}|_{\nu}^2
    +|\nu^{-1/2}\widehat{G}|_{2}^2\right).
  \end{aligned}
\end{equation}
In fact, the Fourier transform of $(\ref{elliptic})$ gives
\begin{equation*}
  \begin{aligned}
    &\partial_t\left\{\frac{1}{2}\sum_{m\neq j}ik_jA_{mm}(\{{\bf{I}}-{\bf{P}}\}\widehat{u})-
    \sum_{m}ik_m A_{jm}(\{{\bf{I}}-{\bf{P}}\}\widehat{u})\right\}+|k|^2\widehat{b}_j+k_{j}^2\widehat{b}_j\\[2mm]
    =&\frac{1}{2}\sum_{m\neq j}ik_jA_{mm}(\hat{R}+\widehat{G})-\sum_{m}ik_mA_{jm}(\hat{R}+\widehat{G}),
  \end{aligned}
\end{equation*}
where
\begin{equation*}
  R=-\xi\cdot\nabla_x \{{\bf{I}}-{\bf{P}}\}u+{\epsilon}L_{FP}\{{\bf{I}}-{\bf{P}}\}u+L\{{\bf{I}}-{\bf{P}}\}u.
\end{equation*}
We then take the complex inner product with $\widehat{b}_j$ to find
\begin{equation}\label{b-F-estimate}
  \begin{aligned}
    &\partial_t\left(\frac{1}{2}\sum_{m\neq j}ik_jA_{mm}(\{{\bf{I}}-{\bf{P}}\}\widehat{u})-
    \sum_{m}ik_mA_{jm}(\{{\bf{I}}-{\bf{P}}\}\widehat{u})|\widehat{b}_j\right)
    +\left(|k|^2+k_j^2\right)|\widehat{b}_j|^2\\[2mm]
    =&\left(\frac{1}{2}\sum_{m\neq j}ik_jA_{mm}(\hat{R}+\widehat{G})-\sum_{m}ik_mA_{jm}(\hat{R}+\widehat{G})|\widehat{b}_j\right)\\[2mm]
    &+\left(\frac{1}{2}\sum_{m\neq j}ik_jA_{mm}(\{{\bf{I}}-{\bf{P}}\}\widehat{u})-
    \sum_{m}ik_mA_{jm}(\{{\bf{I}}-{\bf{P}}\}\widehat{u})|\partial_t\widehat{b}_j\right)\\[2mm]
    =& I_1+I_2.
  \end{aligned}
\end{equation}
Note that
\begin{equation*}
  \hat{R}=-i\xi\cdot k\{{\bf{I}}-{\bf{P}}\}\widehat{u}
  +{\epsilon}L_{FP}\{{\bf{I}}-{\bf{P}}\}\widehat{u}+L\{{\bf{I}}-{\bf{P}}\}\widehat{u},
\end{equation*}
which implies
\begin{equation*}
  |A_{jm}(\hat{R})|^2\lesssim (1+|k|^2)|\{{\bf{I}}-{\bf{P}}\}\widehat{u}|_{\nu}^2.
\end{equation*}
Thus, $I_1$ is bounded by
\begin{equation} \label{I1}
\begin{aligned}
  I_1\leq&~~\eta|k|^2|\widehat{b}_j|^2+C_{\eta}\sum_{j,m}\left(|A_{jm}(\hat{R})|^2+|A_{jm}(\widehat{G})|^2\right)\\
  \leq &~~\eta|k|^2|\widehat{b}_j|^2+
  C_{\eta}(1+|k|^2)\left(|\{{\bf{I}}-{\bf{P}}\}\widehat{u}|_{\nu}^2+|\nu^{-1/2}\widehat{G}|_{2}^2\right).
\end{aligned}
\end{equation}
For $I_2$, using the Fourier transform of $(\ref{macro1})_2$
\begin{equation}\label{b's Fourier}
  \partial_t\widehat{b}_j+ik_j(\widehat{a}+2\widehat{c})
  +\sum_mik_mA_{jm}(\{{\bf{I}}-{\bf{P}}\}\widehat{u})+{\epsilon}\widehat{b}_j=0
\end{equation}
to replace $\partial_t \widehat{b}_j$, we have
\begin{equation} \label{I2}
  \begin{aligned}
    I_2
  \leq&~~\eta|k|^2\left(|\widehat{a}|^2+|\widehat{c}|^2\right)+{\epsilon}^2|\widehat{b}_j|^2
  +C_{\eta}(1+|k|^2)\sum_{jm}|A_{j,m}\{{\bf{I}}-{\bf{P}}\}\widehat{u}|_{2}^2\\
  \leq&~~\eta|k|^2\left(|\widehat{a}|^2+|\widehat{c}|^2\right)+{\epsilon}^2|\widehat{b}_j|^2
  +C_{\eta}(1+|k|^2)|\{{\bf{I}}-{\bf{P}}\}\widehat{u}|_{\nu}^2.
  \end{aligned}
\end{equation}
Therefore, one can take the real part of $(\ref{b-F-estimate})$ and plug the estimates (\ref{I1}) and (\ref{I2})
into it to discover (\ref{b-F-estimate}).

\textit{Estimate on $\widehat{c}$. }
For any $0<\eta<1$, we have
\begin{equation}\label{c-F-E}
  \begin{aligned}
    &\partial_t{\rm Re} \sum_{j}\left(B_j(\{{\bf{I}}-{\bf{P}}\}\widehat{u})|ik_j\widehat{c}\right)+(1-\eta)|k|^2|\widehat{c}|^2\\
    \leq&~~\eta|k|^2||\widehat{b}_j|^2+{\epsilon}^2|\widehat{c}|^2+C_{\eta}(1+|k|^2)\left(|\{{\bf{I}}-{\bf{P}}\}\widehat{u}|_{\nu}^2
    +|\nu^{-1/2}\widehat{G}|_{2}^2\right).
  \end{aligned}
\end{equation}
In fact, multiply the Fourier transform of $(\ref{macro2})_3$
\begin{equation*}
  \partial_tB_j(\{{\bf{I}}-{\bf{P}}\}\widehat{u})+ik_j\widehat{c}=B_j(\hat{R}+\widehat{G})
\end{equation*}
by $-ik_j \widehat{c}$ to give
\begin{equation*}
\begin{aligned}
  &\partial_t\left(B_j(\{{\bf{I}}-{\bf{P}}\}\widehat{u})|ik_j\widehat{c}\right)+|k_j|^2|\widehat{c}|^2\\[2mm]
  =&\left(B_j(\hat{R}+\widehat{G})|ik_j\widehat{c}\right)+\left(B_j(\{{\bf{I}}-{\bf{P}}\}\widehat{u})|ik_j\partial_t\widehat{c}\right)\\[2mm]
  =&~~I_3+I_4.
\end{aligned}
\end{equation*}
$I_3$ is bounded by
\begin{equation} \label{I3}
\begin{aligned}
  I_3\leq&\eta|k_j|^2|\widehat{c}_j|^2+C_{\eta}\sum_j\left(|B_j(\hat{R})|^2+|B_j(\widehat{G})|^2\right)\\
  \leq &\eta|k_j|^2|\widehat{c}_j|^2+C_{\eta}(1+|k|^2)\left(|\{{\bf{I}}-{\bf{P}}\}\widehat{u}|_{\nu}^2
    +|\nu^{-1/2}\widehat{G}|_{2}^2\right).
  \end{aligned}
\end{equation}
For $I_4$, using the Fourier transform of $(\ref{macro1})_3$
\begin{equation*}
  \partial_t\widehat{c}+\frac{1}{3}ik\cdot\widehat{b}+\frac{5}{3}\sum_jik_j B_j(\{{\bf I}-{\bf P}\}\widehat{u})+2{\epsilon}\widehat{c}=0
\end{equation*}
to replace $\partial_t \widehat{c}$, one has
\begin{equation} \label{I4}
  \begin{aligned}
    I_4
    \leq&\eta|k|^2||\widehat{b}_j|^2+{\epsilon}^2|\widehat{c}|^2
    +C_{\eta}(1+|k|^2)\sum_{j}|B_j(\{{\bf{I}}-{\bf{P}}\}\widehat{u})|^2\\[2mm]
    \leq &\eta|k|^2||\widehat{b}_j|^2+{\epsilon}^2|\widehat{c}|^2
    +C_{\eta}(1+|k|^2) |\{{\bf{I}}-{\bf{P}}\}\widehat{u}|_{\nu}^2.
  \end{aligned}
\end{equation}
Hence, (\ref{c-F-E}) follows by taking the real part and applying the estimates of (\ref{I3}) and (\ref{I4}),
and then taking the summation over $1\leq j\leq3$.

\textit{Estimate on $\widehat{a}$. } We claim that it holds for any $0\leq \eta<1$ that
\begin{equation}\label{a-F-E}
  \begin{aligned}
    &\partial_t{\rm Re}\sum_{j}\left(b_j|ik_j\widehat{a}\right)+(1-\eta)|k|^2|\widehat{a}|^2\\
    \leq&~|k|^2|\widehat{b}|^2+C_{\eta}\left(|k|^2|\widehat{c}|^2
  +|k|^2|\{{\bf{I}}-{\bf{P}}\}\widehat{u}|_{\nu}^2+{\epsilon}^2|\widehat{b}|^2\right).
  \end{aligned}
\end{equation}
In fact, using $(\ref{b's Fourier})$, and taking the complex inner product
with $ik_j\widehat{a}$, and then taking the summation over $1\leq j\leq3$,
one has
\begin{equation}\label{a-F-estimate}
  \begin{aligned}
    \partial_t\sum_{j}\left(\widehat{b}_j|ik_j\widehat{a}\right)+|k|^2|\widehat{a}|^2
    =&\sum_{j}\left(-2ik_j\widehat{c}|ik_j\widehat{a}\right)
    -\sum_{j,m}\left(ik_mA_{jm}(\{{\bf{I}}-{\bf{P}}\}\widehat{u})|ik_j\widehat{a}\right)\\[2mm]
    &+\sum_{j}\left(-{\epsilon}\widehat{b}_j|ik_j\widehat{a}\right)
    +\sum_{j}\left(\widehat{b}_j|ik_j\partial_t\widehat{a}\right).
  \end{aligned}
\end{equation}
The first there terms on the right-hand side of $(\ref{a-F-estimate})$ are bounded by
\begin{equation*}
  \eta|k|^2|\widehat{a}|^2+C_{\eta}\left(|k|^2|\widehat{c}|^2
  +|k|^2|\{{\bf{I}}-{\bf{P}}\}\widehat{u}|_{\nu}^2+{\epsilon}^2|\widehat{b}|^2\right),
\end{equation*}
while for the last term, it holds that
\begin{equation*}
  \sum_{j}\left(\widehat{b}_j|ik_j\partial_t\widehat{a}\right)=
  \sum_{j}\left(\widehat{b}_j|ik_j(-ik\cdot\widehat{b})\right)=|k\cdot\widehat{b}|^2\leq|k|^2|\widehat{b}|^2.
\end{equation*}
Here we used the Fourier transform of $(\ref{macro1})_1$:
\begin{equation*}
   \partial_t\widehat{a}+ik\cdot\widehat{b}=0.
\end{equation*}
Then, one can deduce (\ref{a-F-E}) by putting the above estimates into $(\ref{a-F-estimate})$ and taking the real part.

Therefore, $(\ref{macro-Fourier})$ follows
from the proper linear combination of (\ref{b-F-E}), (\ref{c-F-E}) and (\ref{a-F-E})
by taking $M>0$ large enough and $0<\eta<1$ small enough.
Note that
\begin{equation*}
  \begin{aligned}
     |\mathcal{E}_{free}(\widehat{u})|(t,k)\lesssim&\left(|\widehat{a}|^2+|\widehat{b}|^2+|\widehat{c}|^2\right)
     +\sum_{j,m}\left(|A_{jm}(\{{\bf{I}}-{\bf{P}}\}\widehat{u})|^2+|B_j(\{{\bf I}-{\bf{P}}\}\widehat{u})|^2\right)\\[2mm]
     \lesssim&|{\bf{P}}\widehat{u}|_{2}^2+|\{{\bf{I}}-{\bf{P}}\}\widehat{u}|_{2}^2\lesssim |\widehat{u}|_2^2.
  \end{aligned}
\end{equation*}
This completes the proof of lemma $\ref{macro-thm}$.
\end{proof}

\begin{lemma}
  $\kappa_1>0$ exists such that $\mathcal{E}(\widehat{u})(t,k)$, which is defined by
  \begin{equation}
    \label{Ecal}
    \mathcal{E}(\widehat{u})=|\widehat{u}|_2^2+\kappa_1{\rm Re}~\mathcal{E}_{free}(\widehat{u}),
  \end{equation}
  satisfies that
  \begin{equation} \label{Eest0}
    \mathcal{E}(\widehat{u})\sim|\widehat{u}|_2^2
  \end{equation}
  and
  \begin{equation}\label{E's decay}
    \mathcal{E}(\widehat{u})(t,k)
    \leq \mathcal{E}(\widehat{u})(0,k)e^{-\frac{\lambda|k|^2}{1+|k|^2}t}
    +C\int_0^t e^{-\frac{\lambda|k|^2}{1+|k|^2}(t-s)}|\nu^{-1/2}\widehat{G}(s,k)|_{2}^2ds
  \end{equation}
for any $t\geq0$ and $k\in\mathbb{R}^3$.
\end{lemma}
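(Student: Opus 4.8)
The plan is to produce a single Lyapunov-type inequality for $\mathcal{E}(\widehat{u})(t,k)$ by combining a microscopic energy identity in Fourier space with the macroscopic estimate $(\ref{macro-Fourier})$, and then to integrate it by Gronwall's lemma. The equivalence $(\ref{Eest0})$ is immediate once $\mathcal{E}$ is set up: by $(\ref{Efree1})$ one has $|{\rm Re}\,\mathcal{E}_{free}(\widehat{u})|\lesssim|\widehat{u}|_2^2$, so for $\kappa_1>0$ small enough $(1-C\kappa_1)|\widehat{u}|_2^2\le \mathcal{E}(\widehat{u})\le(1+C\kappa_1)|\widehat{u}|_2^2$.

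For the differential inequality I would first take the Fourier transform in $x$ of $(\ref{linear-equation})$, giving $\partial_t\widehat{u}+i(\xi\cdot k)\widehat{u}=L\widehat{u}+{\epsilon}L_{FP}\widehat{u}+\widehat{G}$, take the inner product with $\widehat{u}$ in $\xi$, and pass to real parts. The streaming term contributes $i\int_{\mathbb{R}^3}(\xi\cdot k)|\widehat{u}|^2\,d\xi$, which is purely imaginary and drops out. Since $L$ and $L_{FP}$ are real self-adjoint operators, applying $(\ref{L0})$ and $(\ref{L_FP0})$ separately to the real and imaginary parts of $\widehat{u}$ gives $-{\rm Re}(L\widehat{u}|\widehat{u})\ge\lambda_0|\{{\bf I}-{\bf P}\}\widehat{u}|_{\nu}^2$ and $-{\rm Re}(L_{FP}\widehat{u}|\widehat{u})\ge\lambda_{FP}|\{{\bf I}-{\bf P}_0\}\widehat{u}|_2^2$; and since ${\bf P}G=0$ we have $(\widehat{G}|\widehat{u})=(\widehat{G}|\{{\bf I}-{\bf P}\}\widehat{u})$, so a Young splitting of the source term yields the microscopic inequality
$$\partial_t|\widehat{u}|_2^2+\lambda|\{{\bf I}-{\bf P}\}\widehat{u}|_{\nu}^2+\lambda{\epsilon}|\{{\bf I}-{\bf P}_0\}\widehat{u}|_2^2\lesssim|\nu^{-1/2}\widehat{G}|_2^2.$$
Because $|\{{\bf I}-{\bf P}_0\}\widehat{u}|_2^2\gtrsim|\widehat{b}|^2+|\widehat{c}|^2$, this in particular recovers the $\epsilon(|\widehat{b}|^2+|\widehat{c}|^2)$ dissipation.

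Next I would add $\kappa_1$ times $(\ref{macro-Fourier})$ to the microscopic inequality so that the left side becomes $\partial_t\mathcal{E}(\widehat{u})$ plus dissipation. Choosing $\kappa_1$ small absorbs the $\kappa_1|\{{\bf I}-{\bf P}\}\widehat{u}|_{\nu}^2$ from the right side of $(\ref{macro-Fourier})$ into $\lambda|\{{\bf I}-{\bf P}\}\widehat{u}|_{\nu}^2$, and, using the smallness of $\epsilon$ so that $\lambda\epsilon$ dominates $\kappa_1\epsilon^2$, the remainder $\epsilon^2(|\widehat{b}|^2+|\widehat{c}|^2)$ is absorbed by the $\epsilon L_{FP}$-dissipation. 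This leaves
$$\partial_t\mathcal{E}(\widehat{u})+\lambda|\{{\bf I}-{\bf P}\}\widehat{u}|_{\nu}^2+\lambda\kappa_1\frac{|k|^2}{1+|k|^2}\left(|\widehat{a}|^2+|\widehat{b}|^2+|\widehat{c}|^2\right)\lesssim|\nu^{-1/2}\widehat{G}|_2^2.$$
Now I would use that $\mathcal{E}(\widehat{u})\sim|\widehat{u}|_2^2\sim|\widehat{a}|^2+|\widehat{b}|^2+|\widehat{c}|^2+|\{{\bf I}-{\bf P}\}\widehat{u}|_2^2$, together with $\frac{|k|^2}{1+|k|^2}\le1$ and, in the hard potential range $0\le\gamma\le1$, $\nu(\xi)\gtrsim1$ so that $|\{{\bf I}-{\bf P}\}\widehat{u}|_2^2\lesssim|\{{\bf I}-{\bf P}\}\widehat{u}|_{\nu}^2$; hence $\frac{\lambda|k|^2}{1+|k|^2}\mathcal{E}(\widehat{u})$ is controlled by the dissipation on the left. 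Thus $\partial_t\mathcal{E}(\widehat{u})+c\frac{\lambda|k|^2}{1+|k|^2}\mathcal{E}(\widehat{u})\lesssim|\nu^{-1/2}\widehat{G}|_2^2$ for some $c>0$ independent of $t$ and $k$, and since the coefficient is constant in $t$, Gronwall's inequality delivers $(\ref{E's decay})$.

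The routine parts are the Fourier energy identity and the bookkeeping of constants in the linear combination. The step I expect to be most delicate is verifying that the combined dissipation dominates $\frac{\lambda|k|^2}{1+|k|^2}\mathcal{E}(\widehat{u})$ uniformly in $k$ — in particular in the low-frequency regime $|k|\to0$, where the macroscopic part of the dissipation degenerates — and, relatedly, ensuring that the $\epsilon^2(|\widehat{b}|^2+|\widehat{c}|^2)$ remainder in $(\ref{macro-Fourier})$ is genuinely absorbed by the non-degenerate $\epsilon L_{FP}$-dissipation rather than by the degenerating macroscopic term; this is precisely where the smallness of $\epsilon$ enters.
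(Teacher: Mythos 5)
Your proposal follows the same route as the paper's proof: derive the microscopic Fourier inequality $\partial_t|\widehat{u}|_2^2+\lambda|\{{\bf I}-{\bf P}\}\widehat{u}|_{\nu}^2+\lambda\epsilon|\{{\bf I}-{\bf P}_0\}\widehat{u}|_2^2\lesssim|\nu^{-1/2}\widehat{G}|_2^2$ by dropping the purely imaginary streaming term and applying $(\ref{L0})$, $(\ref{L_FP0})$, and ${\bf P}G=0$; add $\kappa_1$ times $(\ref{macro-Fourier})$ with $\kappa_1$ small to absorb the $\kappa_1|\{{\bf I}-{\bf P}\}\widehat{u}|_{\nu}^2$ and $\kappa_1\epsilon^2(|\widehat{b}|^2+|\widehat{c}|^2)$ remainders; use $\nu\gtrsim1$ for $0\le\gamma\le1$ and $\frac{|k|^2}{1+|k|^2}\le1$ to bound $\frac{\lambda|k|^2}{1+|k|^2}\mathcal{E}(\widehat{u})$ by the dissipation; and conclude by Gronwall. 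This is exactly the paper's argument, and your proposal is correct.
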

\begin{proof}
We first claim that for any $t\geq0$ and $k\in\mathbb{R}^3$, it holds that
\begin{equation}\label{micro-Fourier}
  \partial_t|\widehat{u}|_2^2+\kappa\left\{|\{{\bf{I}}-{\bf{P}}\}\widehat{u}|_{\nu}^2
  +{\epsilon}|\{{\bf{I}}-{\bf{P}}_0\}\widehat{u}|_{2}^2\right\}
  \lesssim |\nu^{-1/2}\widehat{G}|_2^2.
\end{equation}
In fact,
the Fourier transform of $(\ref{linear-equation})$ gives
\begin{equation}\label{u's Fouier}
  \partial_t\widehat{u}+i\xi\cdot k\widehat{u}=L\widehat{u}+{\epsilon}L_{FP}\widehat{u}+\widehat{G}.
\end{equation}
Further, taking the complex inner product with $\widehat{u}$ and taking the real part yield
\begin{equation}
  \label{mFourier1}
  \begin{aligned}
    &\frac{1}{2}\partial_t|\widehat{u}|_2^2-{\rm Re}\int_{\mathbb{R}^3}\left(L\widehat{u}|\widehat{u}\right)d\xi\\[2mm]
    =&{\epsilon}{\rm Re}\int_{\mathbb{R}^3}\left(L_{FP}\widehat{u}|\widehat{u}\right)d\xi
    +{\rm Re}\int_{\mathbb{R}^3}\left(\widehat{G}|\widehat{u}\right)d\xi.
  \end{aligned}
\end{equation}
For the second term on the left hand side of (\ref{mFourier1}), we have from (\ref{L0}) that
\begin{equation*}
  -{\rm Re}\int_{\mathbb{R}^3}\left(L\widehat{u}|\widehat{u}\right)d\xi\geq
  \frac{1}{2}\lambda_0|\{{\bf{I}}-{\bf{P}}\}\widehat{u}|_{\nu}^2.
\end{equation*}
For the two terms on the right-hand side of (\ref{mFourier1}), we have
\begin{equation*}
  {\epsilon}{\rm Re}\int_{\mathbb{R}^3}\left(L_{FP}\widehat{u}|\widehat{u}\right)d\xi
  \leq -\frac{1}{2}{\epsilon}\lambda_{FP}|\{{\bf{I}}-{\bf{P}}_0\}\widehat{u}|_{2}^2
\end{equation*}
and
\begin{equation*}
\begin{aligned}
  {\rm Re}\int_{\mathbb{R}^3}\left(\widehat{G}|\widehat{u}\right)d\xi
  =&{\rm Re}\int_{\mathbb{R}^3}\left(\widehat{G}|\{{\bf{I}}-{\bf{P}}\}\widehat{u}\right)d\xi
  \\[2mm] \leq&
  \frac{1}{4}\lambda_0|\{{\bf{I}}-{\bf{P}}\}\widehat{u}|_{\nu}^2+C |\nu^{-1/2}\widehat{G}|_2^2.
\end{aligned}
\end{equation*}
Here we used ${\bf P}h=0$. Plugging the above estimates into (\ref{mFourier1}) yields (\ref{micro-Fourier}).
Note that $|\widehat{b}|^2+|\widehat{c}|^2\lesssim |\{{\bf{I}}-{\bf{P}}_0\}\widehat{u}|_{2}^2$.
By taking $\kappa_1>0$ small enough,
it follows from (\ref{macro-Fourier}) and (\ref{micro-Fourier}) that
\begin{equation} \label{micro1}
    \partial_t\mathcal{E}(\widehat{u})(t,k)+\frac{\lambda|k|^2}{1+|k|^2}|{\bf P}\widehat{u}|^2
    +\lambda|\{{\bf{I}}-{\bf{P}}\}\widehat{u}|_{\nu}^2
    \lesssim |\nu^{-1/2}\widehat{G}|_{2}^2.
\end{equation}
(\ref{Efree1}) implies (\ref{Eest0}) by further taking $\kappa_1>0$ small enough.
Here, we consider the hard potential case, i.e., $0\leq \gamma\leq 1$.
Thus, we have
\begin{equation}
  \label{micro2}
  \mathcal{E}(\widehat{u})(t,k)\lesssim
  |\widehat{u}|^2_2\lesssim |{\bf P}\widehat{u}|^2+|\{{\bf{I}}-{\bf{P}}\}\widehat{u}|_{\nu}^2.
\end{equation}
Pplug (\ref{micro2}) into (\ref{micro1}) to find
  \begin{equation} \label{Eest1}
    \partial_t\mathcal{E}(\widehat{u})(t,k)+\frac{\lambda|k|^2}{1+|k|^2}\mathcal{E}(\widehat{u})(t,k)
    \lesssim |\nu^{-1/2}\widehat{G}|_{2}^2
\end{equation}
which by the Gronwall's inequality, implies
(\ref{E's decay}).
This completes the proof of Lemma 4.2.
\end{proof}

Now, to prove ,
let $h=0$ so that $u_1(t)={e^{tB}}u_0$ is the solution
to the Cauchy problem $(\ref{linear-equation})$ and
hence satisfies the estimate (\ref{E's decay}) with $h = 0$:
\begin{equation} \label{u_10}
  \mathcal{E}(\widehat{u_1})(t,k)
    \leq \mathcal{E}(\widehat{u_1})(0,k)e^{-\frac{\lambda|k|^2}{1+|k|^2}t}.
\end{equation}
Write $k^{\alpha}=k_1^{\alpha_1}k_2^{\alpha_2}k_3^{\alpha_3}$.
Paseval's identity and $(\ref{Eest0})$ yield
\begin{equation} \label{43}
     \left\|\partial^{\alpha}_xu_1\right\|^2
     \lesssim \int_{\mathbb{R}^3}|k^{2\alpha}||\widehat{u_1}(t,k)|_{^2}^2dk
     \lesssim \int_{\mathbb{R}^3}|k^{2\alpha}|\mathcal{E}(\widehat{u_1})(t,k)dk.
\end{equation}
Then, from (\ref{u_10}) and (\ref{Eest0}), one has
\begin{equation} \label{42}
  \|\partial^{\alpha}_xu_1\|^2\lesssim \int_{\mathbb{R}^3}
  |k^{2\alpha}|e^{-\frac{\lambda|k|^2}{1+|k|^2}t}|\widehat{u_0}|_{2}^2dk.
\end{equation}
As in $\cite{Kawashima}$, one can further estimate (\ref{42}) as
\begin{equation}\label{part-1-decay}
   \begin{aligned}
     \|\partial^{\alpha}_xu_1\|^2
     \lesssim&\int_{|k|\leq1}|k^{2\alpha}|e^{-\frac{\lambda|k|^2}{1+|k|^2}t}|\widehat{u_0}|_{2}^2dk
     +\int_{|k|\geq1}|k^{2\alpha}|e^{-\frac{\lambda|k|^2}{1+|k|^2}t}|\widehat{u_0}|_{2}^2dk\\[2mm]
     \lesssim&\int_{|k|\leq1}|k^{2\alpha}|e^{-\frac{\lambda|k|^2}{1+|k|^2}t}dk
     \|u_0\|_{Z_1}^2+e^{-\frac{\lambda}{2}t}\|\partial^{\alpha}_xu_0\|^2\\[2mm]
     \lesssim&(1+t)^{-\frac{3}{2}-|\alpha|}\left(\|u_0\|_{Z_1}^2+\|\partial^{\alpha}_xu_0\|^2\right).
   \end{aligned}
\end{equation}
Here, we used the Hausdorff-Young inequality $$\sup_{|k|\leq 1}|\widehat{u_0}(k,\xi)|
\lesssim \int_{\mathbb{R}^3}|u_0|(x,\xi)dx.$$
Next, let $u_0=0$ so that
\begin{equation*}
  u_2(t)=\int_0^t{e^{(t-s)B}}G(s)ds
\end{equation*}
is the solution of the Cauchy problem $(\ref{linear-equation})$ with $u_0=0$.
Then, similar to (\ref{43}) and (\ref{part-1-decay}), one has
\begin{equation}\label{part-2-decay}
   \begin{aligned}
     &\left\|\partial^{\alpha}_x\int_0^te^{(t-\tau)B}G(s)ds\right\|^2\\[2mm]
     \lesssim&\int_0^t\int_{\mathbb{R}^3}|k^{2\alpha}|e^{-\frac{|k|^2}{1+|k|^2}(t-s)}
     |\nu^{-1/2}\widehat{G}(s)|_{2}^2dkds\\[2mm]
     \lesssim&\int_0^t(1+t-s)^{-\frac{3}{2}-|\alpha|}\left(\|\nu^{-1/2}G(s)\|_{Z_1}^2
     +\|\nu^{-1/2}\partial^{\alpha}_xG(s)\|^2\right)ds.
   \end{aligned}
\end{equation}
Recall that the solution $u$ to the Cauchy problem $(\ref{u})$-$(\ref{u_0})$
can be formally written as
\begin{equation*}
  u(t)={e^{tB}}u_0+\int_0^t{e^{(t-s)B}}\Gamma(u,u)(s)ds.
\end{equation*}
Thus, $(\ref{part-1-decay})$ and $(\ref{part-2-decay})$ yield
\begin{equation} \label{44}
   \begin{aligned}
     \|u\|^2
     \lesssim&(1+t)^{-\frac{3}{2}}\left(\|u_0\|_{Z_1}^2+\|u_0\|^2\right)\\[2mm]
     &+\int_0^t(1+t-s)^{-\frac{3}{2}}\Big(\|\nu^{-1/2}\Gamma(u,u)(s)\|_{Z_1}^2
     +\|\nu^{-1/2}\Gamma(u,u)(s)\|^2\Big)ds.
   \end{aligned}
\end{equation}
In the following, we shall estimate the terms on the right hand side of (\ref{44}).
For this, we first note that for $0\leq \gamma\leq 1$,
\begin{equation} \label{UY}
\begin{aligned}
    |\nu^{-1/2}\Gamma(u,u)|_2\lesssim &|\nu^{1/2}u|_2|u|_2,\\[2mm]
    \|\nu^{-1/2}\Gamma(u,u)\|_{Z_1}\lesssim& \|\nu^{1/2}u\|\|u\|,
\end{aligned}
\end{equation}
which are proved in \cite{Golse}  and \cite{Ukai_Yang}, respectively.
Thus, one can discover from (\ref{UY}) that if $\gamma\leq 2 l(q-\gamma)$,
then it holds that
\begin{equation}\label{nu-gamma}
\begin{aligned}
  &\|\nu^{-1/2}\Gamma(u,u)(s)\|_{Z_1}
  +\|\nu^{-1/2}\Gamma(u,u)(s)\|\\[2mm]
  \lesssim &
  \|\nu^{1/2}u\|\left(\|u\|+\sup_{x}|u|_2\right)
  \lesssim \mathcal{E}_{q,l}(u)(t).
\end{aligned}
\end{equation}
For $t\geq0$, define a temporal function by
\begin{equation}\label{XNl}
  X_{q,l}(u)(t)
  =\sup_{0\leq s\leq t}(1+s)^{\frac{3}{2}}\mathcal{E}_{q,l}(u)(s).
\end{equation}
Hence, it follows from (\ref{44}) and (\ref{nu-gamma}) that
\begin{equation}
  \label{45}
  \begin{aligned}
    \|u\|^2\lesssim & (1+t)^{-\frac{3}{2}}\left(\|u_0\|_{Z_1}^2+\|u_0\|^2\right)\\[2mm]
     &+\int_0^t(1+t-s)^{-\frac{3}{2}}(1+s)^{-3}X_{q,l}(u)^2(s)ds\\[2mm]
     \lesssim & (1+t)^{-\frac{3}{2}}\left(\|u_0\|_{Z_1}^2+\|u_0\|^2
     +X_{q,l}(u)^2(t)\right).
  \end{aligned}
\end{equation}
Here we used $X_{N,l}(t)$ is nondecreasing in $t$ and
\begin{equation*}
  \int_0^t(1+t-s)^{-\frac{3}{2}}(1+s)^{-3}ds\lesssim (1+t)^{-\frac{3}{2}}.
\end{equation*}
By comparing (\ref{E}) and (\ref{D}), it holds that
\begin{equation*}
  \mathcal{D}_{q,l}(u)(t)+\|(a,b,c)(t)\|^2\geq \kappa\mathcal{E}_{q,l}(u)(t).
\end{equation*}
Then it follows from $(\ref{Lyapunov})$ that
  \begin{equation}\label{new-Lyapunov}
  \frac{d}{dt}\mathcal{E}_{q,l}(u)(t)+\kappa\mathcal{E}_{q,l}(u)(t)\lesssim \|(a,b,c)(t)\|^2\lesssim \|u(t)\|^2.
  \end{equation}
Due to the Gronwall inequality, (\ref{new-Lyapunov}) together with (\ref{45}) imply
\begin{equation*}
\begin{aligned}
  \mathcal{E}_{q,l}(u)(t)
  \lesssim &\mathcal{E}_{q,l}(u_0)e^{-\kappa t}
             +\int_0^t e^{-\kappa(t-s)}\|u(s)\|^2ds\\[2mm]
  \lesssim & (1+t)^{-\frac{3}{2}}\left(\|u_0\|_{Z_1}^2+\mathcal{E}_{q,l}(u_0)
     +X_{q,l}(u)^2(t)\right),
\end{aligned}
\end{equation*}
which implies
\begin{equation*}
  X_{q,l}(u)(t)\lesssim \|u_0\|_{Z_1}^2+\mathcal{E}_{q,l}(u_0)
  +X_{q,l}(u)^2(t).
\end{equation*}
This proves the decay rate stated in our Theorem
for the hard potential case, i.e., $0\leq\gamma\leq 1$ with the help of Strauss' Lemma.

\subsection{The soft potential case}
In this subsection, we shall obtain the time decay of the solution $u$ to the Cauchy problem
(\ref{u})-(\ref{u_0}) in the soft potential case ($-3<\gamma<0$).
For this, we first establish the time decay of the evolution operator ${e^{tB}},$
which is stated as follows.
\begin{lemma}\label{weight-decay}
Define $\mu=\mu(\xi)=\langle\xi\rangle^{-\frac{\gamma}{2}}$.
Let $-3<\gamma<0$, $l\geq0$ and $l_0>\frac{3}{2}$. If
\begin{equation*}
  \left\|\mu^{l+l_0}u_0\right\|_{Z_1}+\left\|\mu^{l+l_0}u_0\right\|<\infty,
\end{equation*}
then the evolution operator ${e^{tB}}$ satisfies
\begin{equation}\label{soft-0-decay}
  \left\|\mu^l{e^{tB}}u_0\right\|
  \lesssim (1+t)^{-\frac{3}{4}}\left(\left\|\mu^{l+l_0}u_0\right\|_{Z_1}+\left\|\mu^{l+l_0}u_0\right\|\right)
\end{equation}
for each $t\geq 0$.
\end{lemma}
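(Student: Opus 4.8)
The plan is to Fourier transform the equation $\partial_t u+\xi\cdot\nabla_xu=Lu+\epsilon L_{FP}u$ (i.e.\ $(\ref{linear-equation})$ with $G\equiv0$) in $x$, so that $\widehat u(t,k)$, the $x$-Fourier transform of $u(t)=e^{tB}u_0$, solves $\partial_t\widehat u+i\xi\cdot k\,\widehat u=L\widehat u+\epsilon L_{FP}\widehat u$, to prove a frequency-localized decay bound for $|\mu^l\widehat u(t,k)|_2$, and to integrate it over $k$, treating $\{|k|\le1\}$ and $\{|k|\ge1\}$ separately. On $\{|k|\ge1\}$ one exploits $\epsilon>0$ decisively: by Lemma $\ref{macro-thm}$ the macroscopic dissipation is $\tfrac{|k|^2}{1+|k|^2}|{\bf P}\widehat u|_2^2\ge\tfrac12|{\bf P}\widehat u|_2^2$, and combined with the $L^2_\xi$-coercivity $-\langle\widehat u,L_{FP}\widehat u\rangle\ge\lambda_{FP}|\{{\bf I}-{\bf P}_0\}\widehat u|_2^2$ of $(\ref{L_FP0})$ this produces a genuine spectral gap $\partial_t\mathcal E(\widehat u)(t,k)+c\epsilon\,\mathcal E(\widehat u)(t,k)\le0$, with $\mathcal E(\widehat u)\sim|\widehat u|_2^2$ as in $(\ref{Ecal})$, hence $\mathcal E(\widehat u)(t,k)\lesssim|\widehat u_0(k)|_2^2\,e^{-c\epsilon t}$ there. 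Testing the equation for $\widehat u$ against $\mu^{2l}\widehat u$ — no transport/hydrodynamic coupling arises since $\langle i\xi\cdot k\,\widehat u,\mu^{2l}\widehat u\rangle$ is purely imaginary — and using the weighted coercivities $(\ref{L1})$ for $L$ and $(\ref{Lfp w1})$ for $L_{FP}$ (with the weight $\langle\xi\rangle^{-\gamma l/2}=\mu^l$ and $\epsilon$ small), whose lower-order error terms are controlled by $\mathcal E(\widehat u)$ and hence also decay exponentially, a Gronwall argument gives $|\mu^l\widehat u(t,k)|_2^2\lesssim e^{-c'\epsilon t}|\mu^l\widehat u_0(k)|_2^2$, so $\int_{|k|\ge1}|\mu^l\widehat u(t,k)|_2^2\,dk\lesssim e^{-c'\epsilon t}\|\mu^l u_0\|^2\lesssim(1+t)^{-3/2}\|\mu^{l+l_0}u_0\|^2$.

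The regime $\{|k|\le1\}$ is the heart of the matter, since there is no spectral gap and the decay is only algebraic. From Lemma $\ref{macro-thm}$ and the microscopic inequality $(\ref{micro-Fourier})$, keeping the Fokker--Planck dissipation $\epsilon|\{{\bf I}-{\bf P}_0\}\widehat u|_2^2$ that was dropped in $(\ref{micro1})$, one has
\[
\partial_t\mathcal E(\widehat u)(t,k)+\lambda\,\mathcal D(\widehat u)(t,k)\le0,\qquad
\mathcal D(\widehat u):=\rho(k)|{\bf P}\widehat u|_2^2+|\{{\bf I}-{\bf P}\}\widehat u|_\nu^2+\epsilon|\{{\bf I}-{\bf P}_0\}\widehat u|_2^2,
\]
with $\rho(k):=|k|^2/(1+|k|^2)$. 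I would then run a weighted $L^2_\xi$ estimate of $|\mu^{l+l_0}\{{\bf I}-{\bf P}\}\widehat u|_2^2$ starting from the equation $(\ref{(I-P)u})$ for $\{{\bf I}-{\bf P}\}\widehat u$: on $\{|k|\le1\}$ the transport/projection terms carry a factor $k$ and are harmless, and — crucially — they involve only ${\bf P}_1\widehat u$, whose square is controlled by $\epsilon|\{{\bf I}-{\bf P}_0\}\widehat u|_2^2$. Invoking $(\ref{L1})$ and $(\ref{Lfp w1})$ with the weight $\mu^{l+l_0}$ and adding a large multiple of the preceding inequality produces a functional $\mathcal E^{w}(\widehat u)(t,k)\sim\mathcal E(\widehat u)(t,k)+|\mu^{l+l_0}\{{\bf I}-{\bf P}\}\widehat u|_2^2$ that is non-increasing in $t$; in particular $\mathcal E^w(\widehat u)(t,k)\le\mathcal E^w(\widehat u)(0,k)\lesssim\|\mu^{l+l_0}u_0\|_{Z_1}^2$ for $|k|\le1$, the last bound being Hausdorff--Young as in $(\ref{part-1-decay})$, $\sup_k|\widehat{\mu^{l+l_0}u_0}(k)|_2\lesssim\|\mu^{l+l_0}u_0\|_{Z_1}$. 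Throughout I write $\mathcal E^w(0,k):=\mathcal E^w(\widehat u)(0,k)$ below.

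The algebraic rate now comes from a Lyapunov/interpolation step. Since $\nu\sim\langle\xi\rangle^{\gamma}$ and $\mu=\langle\xi\rangle^{-\gamma/2}$, Hölder in $\xi$ gives $|\{{\bf I}-{\bf P}\}\widehat u|_2^2\le(|\{{\bf I}-{\bf P}\}\widehat u|_\nu^2)^{\theta}(|\mu^{l+l_0}\{{\bf I}-{\bf P}\}\widehat u|_2^2)^{1-\theta}$ with $\theta=\tfrac{l+l_0}{1+l+l_0}$, and $|{\bf P}\widehat u|_2^2\le(\rho(k)^{-1}\mathcal D)^{\theta}\mathcal E^w(0,k)^{1-\theta}$ (from $|{\bf P}\widehat u|_2^2\le\rho(k)^{-1}\mathcal D$ and $|{\bf P}\widehat u|_2^2\le\mathcal E^w(0,k)$); hence $\mathcal E(\widehat u)\lesssim\rho(k)^{-\theta}\mathcal D^{\theta}\mathcal E^w(0,k)^{1-\theta}$, i.e.\ $\mathcal D\gtrsim\rho(k)\,\mathcal E(\widehat u)^{1+\frac1{l+l_0}}\mathcal E^w(0,k)^{-\frac1{l+l_0}}$. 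Inserting this into $\partial_t\mathcal E(\widehat u)+\lambda\mathcal D\le0$ gives a Bernoulli-type inequality whose integration (using $\mathcal E(\widehat u)(0,k)\le\mathcal E^w(0,k)$) yields $\mathcal E(\widehat u)(t,k)\lesssim\mathcal E^w(0,k)(1+\lambda\rho(k)t)^{-(l+l_0)}$. A further interpolation $|\mu^l\{{\bf I}-{\bf P}\}\widehat u|_2^2\le(|\{{\bf I}-{\bf P}\}\widehat u|_2^2)^{\frac{l_0}{l+l_0}}(|\mu^{l+l_0}\{{\bf I}-{\bf P}\}\widehat u|_2^2)^{\frac{l}{l+l_0}}$ together with $|\mu^l{\bf P}\widehat u|_2^2\sim|{\bf P}\widehat u|_2^2\le\mathcal E(\widehat u)$ then gives $|\mu^l\widehat u(t,k)|_2^2\lesssim\|\mu^{l+l_0}u_0\|_{Z_1}^2(1+\lambda\rho(k)t)^{-l_0}$ on $\{|k|\le1\}$. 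Because $\rho(k)\sim|k|^2$ there and $2l_0>3$, $\int_{|k|\le1}(1+\lambda|k|^2t)^{-l_0}\,dk\lesssim(1+t)^{-3/2}$, and adding the high-frequency contribution from the first paragraph gives $\|\mu^l e^{tB}u_0\|^2\lesssim(1+t)^{-3/2}(\|\mu^{l+l_0}u_0\|_{Z_1}^2+\|\mu^{l+l_0}u_0\|^2)$, i.e.\ the assertion after a square root. The main difficulty is exactly this low-frequency analysis: building a weighted Fourier functional that is genuinely monotone (which forces one to use both the precise structure of the hydrodynamic--microscopic coupling — only ${\bf P}_1\widehat u$ appears — and the degeneracy $\nu\to0$ through the extra weight), and then converting the weak, non-spectral-gap dissipation into the sharp algebraic rate by interpolation and the Bernoulli inequality; it is here that $l_0>3/2$ is used, simultaneously as the decay exponent delivered by the interpolation and as the condition making the final $k$-integral converge to the optimal rate $(1+t)^{-3/2}$.
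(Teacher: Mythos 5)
Your low-frequency analysis is a legitimate (and rather clean) alternative to the paper's. Where the paper multiplies the Lyapunov inequality $\partial_t E_l+\kappa D_l\le 0$ by $[1+\eta\rho(k)t]^J$ and then splits the velocity domain $\{\mu^2\le 1+\eta\rho t\}\cup\{\mu^2>1+\eta\rho t\}$ to extract the $\rho(k)$-dependent algebraic rate, you instead interpolate $|\{{\bf I-P}\}\widehat u|_2^2$ between the dissipation $|\{{\bf I-P}\}\widehat u|_\nu^2$ and the conserved high weight $|\mu^{l+l_0}\{{\bf I-P}\}\widehat u|_2^2$, feed that into a Bernoulli ODE, and then interpolate once more to step the weight down from $l+l_0$ to $l$. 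Both routes produce $(1+\rho(k)t)^{-l_0}$ at the level of $|\mu^l\widehat u(t,k)|_2^2$ and then $(1+t)^{-3/2}$ after integrating over $|k|\le 1$ using $l_0>3/2$. (One parenthetical slip: the projection terms ${\bf P}\,i\xi\cdot k\,\widehat u-i\xi\cdot k\,{\bf P}\widehat u$ do \emph{not} involve only ${\bf P}_1\widehat u$ --- the $\widehat a$ component appears through $i\xi\cdot k\,{\bf P}_0\widehat u$ --- but this doesn't matter because, as in the paper, those terms are absorbed into $|k|^2|{\bf P}\widehat u|_2^2\le\rho(k)|{\bf P}\widehat u|_2^2$, which sits inside the macroscopic dissipation; no $\epsilon$-dissipation is needed.)

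The genuine gap is your treatment of $|k|\ge 1$. You derive a spectral gap of size $c\epsilon$ from $\rho(k)\ge\tfrac12$ together with the Fokker--Planck dissipation $\epsilon|\{{\bf I-P}_0\}\widehat u|_2^2$, conclude $|\mu^l\widehat u(t,k)|_2^2\lesssim e^{-c'\epsilon t}|\mu^l\widehat u_0(k)|_2^2$, and then silently replace $e^{-c'\epsilon t}$ by $(1+t)^{-3/2}$. That last step costs a constant of the order $(c'\epsilon)^{-3/2}$, which blows up as $\epsilon\to 0$. But the whole point of the hypothesis $(q-\gamma)^2\epsilon\le\delta$ is that $\epsilon$ can be arbitrarily small (and, per Remark 1.1, even time-dependent with $\epsilon(t)\to 0$), and Lemma 4.3 is later fed into the nonlinear bootstrap in a way that requires an $\epsilon$-uniform constant; an $\epsilon^{-3/2}$ loss there would destroy the conclusion of Theorem 1.2(ii). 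The paper avoids this by never invoking the $\epsilon$-gap at high frequency: it puts the full weighted norm $|\mu^l\widehat u|_2^2\chi_{|k|\ge 1}$ into $E_l$ (transport terms drop out by antisymmetry), keeps the $\epsilon$-free dissipation $D_l=|\mu^{l-1}\{{\bf I-P}\}\widehat u|_2^2+\rho(k)|{\bf P}\widehat u|_2^2$, and runs the same time-weighted argument uniformly in $k$, using only $\rho(k)\ge\tfrac12$ to convert $(1+\eta\rho(k)t)^{-J}$ into $(1+t)^{-J}$. The fix for your proof is the same: run your Bernoulli/interpolation argument on all of $k$-space (it only needs $\partial_t\mathcal E^w+\lambda D\le 0$ and $\rho(k)\ge c_0>0$ on $|k|\ge 1$), rather than switching to the $\epsilon$-gap there.
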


\begin{proof}
Let $G=0$ so that $u_1(t)={e^{tB}}u_0$ is the solution
to the Cauchy problem $(\ref{linear-equation})$.
Apply $\{{\bf I}-{\bf P}\}$ to (\ref{u's Fouier}) with $G=0$ to find
\begin{equation*}
  \partial_t\{{\bf{I}}-{\bf{P}}\}\widehat{u_1}+i\xi\cdot k\{{\bf{I}}-{\bf{P}}\}\widehat{u_1}
  =L\{{\bf{I}}-{\bf{P}}\}\widehat{u_1}+{\epsilon}L_{FP}\{{\bf{I}}-{\bf{P}}\}\widehat{u_1}
  +{\bf{P}}i\xi\cdot k\widehat{u_1}-i\xi\cdot k{\bf{P}}\widehat{u_1}.
\end{equation*}
By further taking the complex inner product of the above equation with
$\mu^{2l}\{{\bf{I}}-{\bf{P}}\}\widehat{u_1}$ and integrating it over $\mathbb{R}_{\xi}^3$,
we have
\begin{equation}
  \label{soft0}
  \begin{aligned}
    &\partial_t\left|\mu^l\{{\bf{I}}-{\bf{P}}\}\widehat{u_1}\right|_2^2
    +\kappa\left|\mu^l\{{\bf{I}}-{\bf{P}}\}\widehat{u_1}\right|_{\nu}^2\\[2mm]
    \lesssim &
    \left|\{{\bf{I}}-{\bf{P}}\}\widehat{u_1}\right|_{\nu}^2
    +{\rm Re}\int_{\mathbb{R}^3}
    \left({\bf{P}}i\xi\cdot k\widehat{u_1}-i\xi\cdot k{\bf{P}}\widehat{u_1}|
    \mu^{2l}\{{\bf{I}}-{\bf{P}}\}\widehat{u_1}\right)d\xi,
  \end{aligned}
\end{equation}
whenever $(q-\gamma)^2{\epsilon}$ is small enough.
Here, we used (\ref{L1}) and (\ref{Lfp w1}).
For the second term on the right hand side of (\ref{soft0}), it holds that
for each $|k|\leq 1$,
\begin{equation*}
  \begin{aligned}
    &{\rm Re}\int_{\mathbb{R}^3}
    \left({\bf{P}}i\xi\cdot k\widehat{u_1}-i\xi\cdot k{\bf{P}}\widehat{u_1}|
    \mu^{2l}\{{\bf{I}}-{\bf{P}}\}\widehat{u_1}\right)d\xi\\[2mm]
    \lesssim & \left|\{{\bf{I}}-{\bf{P}}\}\widehat{u_1}\right|_{\nu}^2
    +|k|^2\left(|{\bf P}\widehat{u_1}|_2^2+
    \left|\{{\bf{I}}-{\bf{P}}\}\widehat{u_1}\right|_{\nu}^2\right)\\[2mm]
    \lesssim & \left|\{{\bf{I}}-{\bf{P}}\}\widehat{u_1}\right|_{\nu}^2
    +\frac{|k|^2}{1+|k|^2}|{\bf P}\widehat{u_1}|_2^2.
  \end{aligned}
\end{equation*}
Thus, using $\mu=\langle\xi\rangle^{-\frac{\gamma}{2}}$, we get
\begin{equation}
  \label{soft1}
  \begin{aligned}
    &\partial_t\left|\mu^l\{{\bf{I}}-{\bf{P}}\}\widehat{u_1}\right|_2^2\chi_{|k|\leq1}
    +\kappa\left|\mu^{l-1}\{{\bf{I}}-{\bf{P}}\}\widehat{u_1}\right|_2^2\chi_{|k|\leq1}\\[2mm]
    \lesssim &
    \left|\{{\bf{I}}-{\bf{P}}\}\widehat{u_1}\right|_{\nu}^2
    +\frac{|k|^2}{1+|k|^2}|{\bf P}\widehat{u_1}|_2^2.
  \end{aligned}
\end{equation}
To obtain the velocity-weighted estimate for the pointwise time-frequency variables
over $|k| \geq 1$, we directly take the complex inner product of (\ref{u's Fouier}) with $G=0$
with $\mu^{2l}\widehat{u_1}$ and integrate in over $\mathbb{R}^3_{\xi}$ to discover
\begin{equation}
  \label{soft2}
    \partial_t\left|\mu^l\widehat{u_1}\right|_2^2
    +\kappa\left|\mu^{l-1}\widehat{u_1}\right|_{2}^2
    \lesssim
    \left|\widehat{u_1}\right|_{\nu}^2,
\end{equation}
whenever $(q-\gamma)^2{\epsilon}$ is small enough.
Note that $$\frac{|k|^2}{1+|k|^2}\chi_{|k|\geq 1}\geq\frac{1}{2}.$$
It follows that
\begin{equation}\label{soft3}
\begin{aligned}
    &\partial_t\left|\mu^l\widehat{u_1}\right|_2^2\chi_{|k|\geq1}
    +\kappa\left|\mu^{l-1}\widehat{u_1}\right|_{2}^2\chi_{|k|\geq1}\\[2mm]
    \lesssim &
    \left|\{{\bf{I}}-{\bf{P}}\}\widehat{u_1}\right|_{\nu}^2
    +\frac{|k|^2}{1+|k|^2}|{\bf P}\widehat{u_1}|_2^2.
\end{aligned}
\end{equation}
Therefore, for $\kappa_2>0$ small enough, a suitable linear combination of
(\ref{soft1}), (\ref{soft3}) and (\ref{micro1}) with $h\equiv 0$
\begin{equation}
    \partial_t\mathcal{E}(\widehat{u_1})(t,k)+\frac{\lambda|k|^2}{1+|k|^2}|{\bf P}\widehat{u_1}|_2^2
    +\lambda|\{{\bf{I}}-{\bf{P}}\}\widehat{u_1}|_{\nu}^2
    \leq 0
\end{equation}
 yields that
whenever $l \geq  0$,
\begin{equation} \label{soft x}
  \partial_tE_l(\widehat{u_1})+\kappa D_l(\widehat{u_1})\leq 0
\end{equation}
where $E_l(\widehat{u_1})$ and $D_l(\widehat{u_1})$ are given by
\begin{equation*}
  \begin{aligned}
    E_l(\widehat{u_1})=&\mathcal{E}(\widehat{u_1})
                        +\kappa_2\left(\left|\mu^l\{{\bf{I}}-{\bf{P}}\}\widehat{u_1}\right|_{2}^2\chi_{|k|\leq 1}
                        +\left|\mu^l\widehat{u_1}\right|_{2}^2\chi_{|k|\geq 1}\right),\\[2mm]
    D_l(\widehat{u_1})=&\left|\mu^{l-1}\{{\bf{I}}-{\bf{P}}\}\widehat{u_1}\right|_{2}^2
                        +\frac{|k|^2}{1+|k|^2}\left|{\bf P}\widehat{u_1}\right|_2^2.
  \end{aligned}
\end{equation*}
Due to $(\ref{Eest0})$ and the fact ${\bf P}\widehat{u_1}$
decays exponentially in $\xi$, it is clear that
\begin{equation} \label{equvi}
    E_l(\widehat{u_1}) \sim \left|{\bf P}\widehat{u_1}\right|_2^2
    +|\mu^l\{{\bf{I}}-{\bf{P}}\}\widehat{u_1}|_2^2
    \sim\left|\mu^l\widehat{u_1}\right|_2^2.
\end{equation}

Set
$$\rho(k)=\frac{|k|^2}{1+|k|^2}.$$
Let $0<\eta\leq1$ and $J> 0$ be chosen later.
Multiplying $(\ref{soft x})$ by $[1+\eta\rho(k)t]^J$,
we have from (\ref{equvi}) that
\begin{equation}\label{1-lya-weight}
\begin{aligned}
    &\partial_t\left\{[1+\eta\rho(k)t]^JE_l(\widehat{u_1})\right\}
    +\kappa [1+\eta\rho(k)t]^JD_{l}(\widehat{u_1})\\[2mm]
    \leq &J[1+\eta\rho(k)t]^{J-1}\eta\rho(k)E_l(\widehat{u_1})\\[2mm]
    \leq &CJ[1+\eta\rho(k)t]^{J-1}\eta\rho(k)\left|{\bf P}\widehat{u_1}\right|_2^2
    +CJ[1+\eta\rho(k)t]^{J-1}\eta\rho(k)|\mu^l\{{\bf{I}}-{\bf{P}}\}\widehat{u_1}|_2^2\\[2mm]
    \leq &\eta C[1+\eta\rho(k)t]^{J}D_l(\widehat{u_1})
    +CJ[1+\eta\rho(k)t]^{J-1}\eta\rho(k)|\mu^l\{{\bf{I}}-{\bf{P}}\}\widehat{u_1}|_2^2.
\end{aligned}
\end{equation}
In what follows, we estimate the second term on the right hand side of (\ref{1-lya-weight}).
To this end, let $p>1$ be chosen later. Then it holds that
\begin{equation} \label{soft5}
\begin{aligned}
  &\left|\mu^l\{{\bf{I}}-{\bf{P}}\}\widehat{u_1}\right|_2^2\\[2mm]
  \leq &\left|\mu^l\{{\bf{I}}-{\bf{P}}\}\widehat{u_1}\chi_{\mu^2(\xi)\leq [1+\eta\rho(k)t]}\right|_2^2
        +\left|\mu^l\{{\bf{I}}-{\bf{P}}\}\widehat{u_1}\chi_{\mu^2(\xi)>[1+\epsilon\rho(k)t]}\right|_2^2\\[2mm]
  \leq &[1+\eta\rho(k)t]\left|\mu^{l-1}\{{\bf{I}}-{\bf{P}}\}\widehat{u_1}\right|_2^2
        +[1+\eta\rho(k)t]^{-p-J+1}\left|\mu^{l+p+J-1}\{{\bf{I}}-{\bf{P}}\}\widehat{u_1}\right|_2^2\\[2mm]
  \leq &[1+\eta\rho(k)t]D_{l}(\widehat{u_1})
        +C[1+\eta\rho(k)t]^{-p-J+1}E_{l+p+J-1}(\widehat{u_1}).
\end{aligned}
\end{equation}
Here, we used the splitting
\begin{equation*}
    1=\chi_{\mu^2(\xi)\leq [1+\eta\rho(k)t]}+\chi_{\mu^2(\xi)>[1+\eta\rho(k)t]}
\end{equation*}
and (\ref{equvi}).
Plugging (\ref{soft5}) into (\ref{1-lya-weight}) and noting that
$E_{l+p+J-1}(\widehat{u_1})\leq E_{l+p+J-1}(\widehat{u_0})$
from (\ref{soft x}) due to $l+p+J-1\geq 0$,
one has
\begin{equation*}
  \begin{aligned}
         &\partial_t\left\{[1+\eta\rho(k)t]^JE_l(\widehat{u_1})\right\}
           +\kappa [1+\eta\rho(k)t]^JD_{l}(\widehat{u_1})\\[2mm]
    \leq &\epsilon C[1+\eta\rho(k)t]^{J}D_l(\widehat{u_1})
           +C[1+\eta\rho(k)t]^{-p}\eta\rho(k)E_{l+p+J-1}(\widehat{u_0}),
  \end{aligned}
\end{equation*}
which implies
\begin{equation*}
         \partial_t\left\{[1+\eta\rho(k)t]^JE_l(\widehat{u_1})\right\}
           +\lambda [1+\eta\rho(k)t]^JD_{l}(\widehat{u_1})
    \lesssim [1+\eta\rho(k)t]^{-p}\eta\rho(k)E_{l+p+J-1}(\widehat{u_0}),
\end{equation*}
whenever $\eta>0$ is small enough. Integrating the above inequality, using
\begin{equation*}
  \int_0^t[1+\eta\rho(k)s]^{-p}\eta\rho(k)ds\leq \int_0^{\infty}[1+s]^{-p}ds< \infty
\end{equation*}
for $p>1$, and noting $p+J-1>0$, we have
\begin{equation*}
  [1+\eta\rho(k)t]^JE_l(\widehat{u_1})
  \lesssim E_l(\widehat{u_0}) + E_{l+p+J-1}(\widehat{u_0})
  \lesssim E_{l+p+J-1}(\widehat{u_0}).
\end{equation*}
Now, for any given $l_0>\frac{3}{2}$,
we choose $J>\frac{3}{2}$ and $p>1$ such that $p+J-1=l_0$ to get
\begin{equation} \label{soft6}
    E_l(\widehat{u_1})\lesssim [1+\eta\rho(k)t]^{-J}E_{l+l_0}(\widehat{u_0}).
\end{equation}
Since $J>\frac{3}{2}$, (\ref{equvi}), (\ref{soft6}) and Hausdorff-Young inequality yield
\begin{equation*}
  \begin{aligned}
    \left\|\mu^lu_1\right\|^2\lesssim&\int_{\mathbb{R}^3}\left|\mu^l\widehat{u_1}\right|_2^2dk
                             \lesssim\int_{\mathbb{R}^3}E_l(\widehat{u_1})dk\\[2mm]
                             \lesssim&\sup_{k}E_{l+l_0}(\widehat{u_0})
                                      \int_{|k|\leq 1}[1+\eta\rho(k)t]^{-J}dk
                                     +(1+ t)^{-J}\int_{|k|\geq 1}E_{l+l_0}(\widehat{u_0})dk\\[2mm]
                             \lesssim&(1+t)^{-\frac{3}{2}}\left(\left\|\mu^{l+l_0}u_0\right\|_{Z_1}^2
                                      +\left\|\mu^{l+l_0}u_0\right\|^2\right).
  \end{aligned}
\end{equation*}
This completes the proof.
\end{proof}
Recall that the solution $u$ to the Cauchy problem $(\ref{u})$-$(\ref{u_0})$
can be formally written as
\begin{equation*}
  u(t)={e^{tB}}u_0+\int_0^t{e^{(t-s)B}}\Gamma(u,u)(s)ds.
\end{equation*}
Thus, one has
\begin{equation*}
  \begin{aligned}
    \|u\|^2\lesssim&(1+t)^{-\frac{3}{2}}\left(\|\mu^{l_0}u_0\|_{Z_1}^2+\|\mu^{l_0}u_0\|^2\right)\\[2mm]
    &+\int_0^t(1+t-s)^{-\frac{3}{2}}\Big(\|\mu^{l_0}\Gamma(u,u)(s)\|_{Z_1}^2
    +\|\mu^{l_0}\Gamma(u,u)(s)\|^2\Big)ds,
  \end{aligned}
\end{equation*}
from Lemma 4.3. To estimate the time integral term on the right hand side of the above inequality,
we note that
\begin{equation*}
  \|\mu^{l_0}\Gamma(u,u)(t)\|_{Z_1}+\|\mu^{l_0}\Gamma(u,u)(t)\|
  \lesssim \sum_{|\alpha|+|\beta|\leq N}\|\partial_{\beta}^{\alpha}u\|
           \sum_{|\alpha|\leq N}\|\langle\xi\rangle^{\max\{0,\frac{\gamma}{2}(1-l_0)\}}\partial_{x}^{\alpha}u\|,
\end{equation*}
which is proved in \cite{Duan_Yang_Zhao2}.
Then we have
\begin{equation*}
  \|\mu^{l_0}\Gamma(u,u)(t)\|_{Z_1}+\|\mu^{l_0}\Gamma(u,u)(t)\|\lesssim \mathcal E_{q,l-1}(u)(t),
\end{equation*}
whenever $\frac{\gamma}{2}(1-l_0)\leq (q-\gamma)(l-1)$.
Moreover, it follows that
\begin{equation}  \label{soft7}
    \|u\|^2\lesssim
    (1+t)^{-\frac{3}{2}}\left(\|\mu^{l_0}u_0\|_{Z_1}^2+\|\mu^{l_0}u_0\|^2+X_{q,l-1}(u)^2(t)\right).
\end{equation}

Let $0<\eta<1/2$.
Notice that $(\ref{Lyapunov})$ also holds when $l$ is replaced by $l-1$
under the assumption $l\geq N+1$, $\sup_{0\leq t\leq T}\mathcal{E}_{q,l}(s)\leq \delta_0$
and $(q-\gamma)^2{\epsilon}\leq \delta_0$.
Thus, it holds that
\begin{equation*}
  \frac{d}{dt}{\mathcal{E}}_{q,l-1}(u)(t)+\lambda \mathcal{D}_{q,l-1}(u)(t)\leq 0.
\end{equation*}
Multiplying the above inequality by $(1+t)^{{3}/{2}+\eta}$ gives
\begin{equation}\label{soft-1-decay}
  \frac{d}{dt}\left\{(1+t)^{\frac{3}{2}+\eta}{\mathcal{E}}_{q,l-1}(u)(t)\right\}
  +\lambda(1+t)^{\frac{3}{2}+\eta}\mathcal{D}_{q,l-1}(u)(t)\lesssim (1+t)^{\frac{1}{2}+\eta}{\mathcal{E}}_{q,l-1}(u)(t).
\end{equation}
Similarly, from $(\ref{Lyapunov})$ with $l$ replaced by $l-\frac{1}{2}$ and further multiplying it by $(1+t)^{{1}/{2}+\eta}$, one has
\begin{equation}\label{soft-2-decay}
\begin{aligned}
  &\frac{d}{dt}\left\{(1+t)^{\frac{1}{2}+\eta}{\mathcal{E}}_{q,l-\frac{1}{2}}(u)(t)\right\}
  +\lambda(1+t)^{\frac{1}{2}+\eta}\mathcal{D}_{q,l-\frac{1}{2}}(u)(t)\\[2mm]
  \lesssim &(1+t)^{-\frac{1}{2}+\eta}{\mathcal{E}}_{q,l-\frac{1}{2}}(u)(t)
  \lesssim {\mathcal{E}}_{q,l-\frac{1}{2}}(u)(t).
\end{aligned}
\end{equation}
Note from $(\ref{E})$, $(\ref{D})$ that
\begin{equation*}
  {\mathcal{E}}_{q,l'-\frac{1}{2}}(u)(t)\lesssim \mathcal{D}_{q,l'}(u)(t)+\|{\bf P}u\|^2
\end{equation*}
holds for any given $l'$.
Then, from taking integration over $[0,t]$ of (\ref{soft-1-decay}),
(\ref{soft-2-decay}) and (\ref{Lyapunov}) and further taking
the appropriate linear combination, we have
\begin{equation*}
  (1+t)^{\frac{3}{2}+\eta}{\mathcal{E}}_{q,l-1}(u)(t)
  \lesssim {\mathcal{E}}_{q,l}(u_0) +\int_0^t(1+s)^{\frac{1}{2}+\eta}\|{\bf P}u(s)\|^2ds.
\end{equation*}
Thus, applying the estimate $(\ref{soft7})$ to the second term on the right hand side
of the above inequality and noticing
\begin{equation*}
  \int_0^t(1+s)^{\frac{1}{2}+\eta}(1+s)^{-\frac{3}{2}}ds\lesssim (1+t)^{\eta},
\end{equation*}
we have
\begin{equation*}
  (1+t)^{\frac{3}{2}+\eta}{\mathcal{E}}_{q,l-1}(u)(t)
  \lesssim {\mathcal{E}}_{q,l}(u_0)+(1+t)^{\eta}\left\{
  \|\mu^{l_0}u_0\|_{Z_1}^2+\|\mu^{l_0}u_0\|^2+X_{q,l-1}(u)^2(t)\right\},
\end{equation*}
which implies
\begin{equation*}
  \sup_{0\leq s\leq t}(1+s)^{\frac{3}{2}}{\mathcal{E}}_{q,l-1}(u)(s)
  \lesssim {\mathcal{E}}_{q,l}(u_0)
  +\|\mu^{l_0}u_0\|_{Z_1}^2+\|\mu^{l_0}u_0\|^2+X_{q,l-1}(u)^2(t).
\end{equation*}
This proves the decay rate stated in our Theorem for
the soft potential case, i.e., $-3<\gamma<0$ by using Strauss' Lemma.

\section*{Acknowledgments}
This work was supported by ``the Fundamental Research Funds for the Central
Universities". This work was completed when Tao Wang was visiting the Mathematical Institute
at the University of Oxford under the support of
the China Scholarship Council 201206270022.
He would like to thank Professor Gui-Qiang Chen and his group for their kind hospitality.


\begin{thebibliography}{99}

\bibitem{Arnold01} A. Arnold, P. Markowich, G. Toscani and A. Unterreiter, On convex Sobolev inequalities and the rate of convergence to equilibrium for Fokker-Planck type equations. {\it Comm. Partial Differential Equations} {\bf 26} (2001), no. 1-2, 43-100.
\bibitem{Bisi} M. Bisi, J. A. Carrillo and G. Toscani, Contractive metrics for a Boltzmann equation for granular gases: diffusive equilibria. {\it J. Stat. Phys.} {\bf 118} (2005), no. 1-2, 301-331.
\bibitem{Cercignani88} C. Cercignani, \emph{The Boltzmann Equation and Its Applications}. Springer-Verlag, New York, 1988.
\bibitem{Cercignani} C. Cercignani, R. Illner and M. Pulvirenti, \emph{The Mathematical Theory of Dilute Gases}. Applied Mathematical Sciences, 106. Springer-Verlag, New York, 1994.
\bibitem{DiPerna_Lions} R. J. DiPerna and P.-L. Lions, On the Fokker-Planck-Boltzmann equation. {\it Comm. Math. Phys.} {\bf 120} (1988), no. 1, 1-23.
\bibitem{Duan08} R.-J. Duan, On the Cauchy problem for the Boltzmann equation in the whole space: global existence and uniform stability in $L_{\xi}^2(H_x^N)$. {\it J. Differential Equations} {\bf 244} (2008), no. 12, 3204-3234.
\bibitem{Duan10} R.-J. Duan, M. Fornasier and G. Toscani, A kinetic flocking model with diffusion. {\it Comm. Math. Phys.} {\bf 300} (2010), no. 1, 95-145.
\bibitem{Duan_Yang10} R.-J. Duan and T. Yang, Stability of the one-species Vlasov-Poisson-Boltzmann system. {\it SIAM J. Math. Anal.} {\bf 41} (2010), no. 6, 2353-2387.
\bibitem{Duan_Strain11} R.-J. Duan and R. M. Strain, Optimal time decay of the Vlasov-Poisson-Boltzmann system in $\mathbb{R}^3$. {\it Arch. Ration. Mech. Anal.} {\bf 199} (2011), no. 1, 291-328.
\bibitem{Duan_CPAM} R.-J. Duan and R. M. Strain, Optimal large-time behavior of the Vlasov-Maxwell-Boltzmann system in the whole space. {\it Comm. Pure Appl. Math.} {\bf 64} (2011), no. 11, 1497-1546.
\bibitem{Duan_Yang_Zhao1} R.-J. Duan, T. Yang and H.-J. Zhao, The Vlasov-Poisson-Boltzmann system in the whole space: The hard potential case. {\it J. Differential Equations} {\bf 252} (2012), no. 12, 6356-6386.
\bibitem{Duan_Yang_Zhao2} R.-J. Duan, T. Yang and H.-J. Zhao, The Vlasov-Poisson-Boltzmann system for soft potentials. {\it Mathematical Models and Methods in Applied Sciences} {\bf 23} (2013), no. 06, 979-1028.
\bibitem{Glassey}  R. Glassey, \emph{The Cauchy Problem in Kinetic Theory}. Society for Industrial and Applied Mathematics (SIAM), Philadelphia, PA, 1996.
\bibitem{Golse} F. Golse, B. Perthame and C. Sulem, On a boundary layer problem for the nonlinear Boltzmann equation. {\it Arch. Rational Mech. Anal.} {\bf 103} (1988), no. 1, 81-96.
\bibitem{Gressman} P. Gressman and R. Strain, Global classical solutions of the Boltzmann equation without angular cut-off. {\it J. Amer. Math. Soc.} {\bf 24} (2011), no. 3, 771-847.
\bibitem{Guo02} Y. Guo, The Vlasov-Poisson-Boltzmann system near Maxwellians. {\it Comm. Pure Appl. Math.} {\bf 55} (2002), no. 9, 1104-1135.
\bibitem{Guo03} Y. Guo, Classical solutions to the Boltzmann equation for molecules with an angular cutoff. {\it Arch. Ration. Mech. Anal.} {\bf 169} (2003), no. 4, 305-353.
\bibitem{Guo04} Y. Guo, The Boltzmann equation in the whole space. {\it Indiana Univ. Math. J.} {\bf 53} (2004), no. 4, 1081-1094.
\bibitem{Guo06} Y. Guo, Boltzmann diffusive limit beyond the Navier-Stokes approximation. {\it Comm. Pure Appl. Math.} {\bf 59} (2006), no. 5, 626-687.
\bibitem{Hamdache} K. Hamdache, Estimations uniformes des solutions de l'{\'e}quation de Boltzmann par les m{\'e}thodes de viscosit{\'e} artificielle et de diffusion de Fokker-Planck. {\it Comptes rendus de l'Acad{\'e}mie des sciences. S{\'e}rie 1, Math{\'e}matique} {\bf 302} (1986), no. 5, 187-190.
\bibitem{He} Lingbing He, Regularities of the solutions to the Fokker-Planck-Boltzmann equation. {\it J. Differential Equations} {\bf 244} (2008), no. 12, 3060-3079.
\bibitem{Kawashima} S. Kawashima, Systems of a hyperbolic-parabolic composite type, with applications to the equations of magnetohydrodynamics. Doctoral thesis, Kyoto University, 1983.
\bibitem{Li_Matsumura} H.-L. Li and A. Matsumura, Behaviour of the Fokker-Planck-Boltzmann equation near a Maxwellian. {\it Arch. Ration. Mech. Anal.} {\bf 189} (2008), no. 1, 1-44.
\bibitem{Liu_Yang_Yu} T.-P. Liu, T. Yang and S.-H. Yu, Energy method for Boltzmann equation. {\it Phys. D} {\bf 188} (2004), no. 3-4, 178-192.
\bibitem{Liu_Yu} T.-P. Liu and S.-H. Yu, Boltzmann equation: micro-macro decompositions and positivity of shock profiles. {\it Comm. Math. Phys.} {\bf 246} (2004), no. 1, 133-179.
\bibitem{Loyalka} S. K. Loyalka, {Rarefied gas dynamic problems in environmental sciences}. \emph{Proceedings 15th International Symposium on Rarefied Gas Dynamics}, (Eds. V. Boffi and C. Cercignani) Teubner, Stuttgart, 1986.
\bibitem{Mouhot} C. Mouhot, Explicit coercivity estimates for the linearized Boltzmann and Landau operators. {\it Comm. Partial Differential Equations} {\bf 31} (2006), no. 7-9, 1321-1348.
\bibitem{Strauss} W. A. Strauss, Decay and asymptotics for $u_{tt}-\triangle u=F(u)$. {\it J. Functional Analysis} {\bf 2} (1968), 409-457.
\bibitem{Strain_Guo} R. Strain and Y. Guo, Exponential decay for soft potentials near Maxwellian. {\it Arch. Ration. Mech. Anal.} {\bf 187} (2008), no. 2, 287¨C339.
\bibitem{villani02} C. Villani, A review of mathematical topics in collisional kinetic theory. \emph{Handbook of mathematical fluid dynamics, Vol. I}, 71-305, North-Holland, Amsterdam, 2002.
\bibitem{Villani_MAMS} C. Villani, Hypocoercivity. {\it Mem. Amer. Math. Soc.} {\bf 202} (2009), no. 950, iv+141 pp.
\bibitem{Ukai} S. Ukai, On the existence of global solutions of mixed problem for non-linear Boltzmann equation. {\it Proc. Japan Acad.} {\bf 50} (1974), 179-184.
\bibitem{Ukai_Yang} S. Ukai and T. Yang, The Boltzmann equation in the space $L^2\cap L^\infty_\beta$: global and time-periodic solutions. {\it Anal. Appl. (Singap.)} {\bf 4} (2006), no. 3, 263-310.
\bibitem{Yang_Zhao}  T. Yang and H.-J. Zhao, A new energy method for the Boltzmann equation. {\it J. Math. Phys.} {\bf 47} (2006), no. 5, 053301, 19 pp.
\bibitem{Zhong_Li} M.-Y. Zhong and H.-L. Li, Long time behavior of the Fokker-Planck-Boltzmann Equation. Preprint.
\bibitem{Zhong_Li2} M.-Y. Zhong and H.-L. Li, Long time behavior of the Fokker-Planck-Boltzmann equation with soft potential. {\it Quart. Appl. Math.} {\bf 70} (2012), no. 4, 721-742.
\end{thebibliography}
\end{document}